\documentclass{article}
\makeatletter

\usepackage{amsmath, amsfonts, amssymb, amsthm} \usepackage{etoolbox}

\usepackage{mathtools}

\usepackage[utf8]{inputenc}
\usepackage[T1]{fontenc} \usepackage{lmodern} \usepackage{fix-cm}\rmfamily\DeclareFontShape{T1}{lmr}{bx}{sc}{<->ssub*cmr/bx/sc}{} \usepackage{microtype} \usepackage{csquotes}\MakeOuterQuote{"}  

\DeclareMathSymbol{\epsilon}{\mathord}{letters}{"22}

\DeclareMathSymbol{\phi}{\mathord}{letters}{"27}

\usepackage[a4paper,top=2cm,bottom=2cm,left=3cm,right=3cm,marginparwidth=2cm]{geometry}

\usepackage[colorlinks=true, allcolors=black]{hyperref} 
\usepackage[
  backend=biber,
  date=year, giveninits=true, abbreviate=true,
  sorting=nyt,
  maxbibnames=99,
  defernumbers=true,
]{biblatex}

\DeclareMultiCiteCommand{\cites}[\mkbibbrackets]{\cite}{\addsemicolon\space}

\AtEveryBibitem{\clearfield{issn}
  \clearfield{isbn}
  \clearfield{doi}
  \clearfield{urlyear,urlmonth} \ifentrytype{thesis}{
    \clearfield{url}
    \clearfield{eprint}
    \clearfield{eprinttype}
    }{}
  \ifentrytype{incollection}{
    \clearfield{url}
    }{}
  \ifentrytype{book}{
    \clearlist{mvbook}
    \clearfield{url}
    }{}
}

\DeclareFieldFormat{pages}{#1} \DeclareFieldFormat[article]{title}{#1} \DeclareFieldFormat[article]{journaltitle}{\mkbibemph{#1}}
\DeclareFieldFormat{eprint}{\href{\thefield{url}}{\texttt{\strfield{eprinttype}:\thefield{eprint}}}}

\DeclareBibliographyDriver{online}{\printnames{author}\addperiod\space\printfield{title}\addcomma\space\printdate\addperiod\space\printfield{eprint}\iffieldundef{note}{}{\space\printfield{note}}\addperiod}

\ExplSyntaxOn
\cs_generate_variant:Nn \tl_if_in:nnTF { en }
\cs_generate_variant:Nn \str_count:n { e }

\newbibmacro*{pages-or-eid}[1]{\iffieldundef{pages}{}{\tl_if_in:enTF{\thefield{pages}}{\bibrangedash}
      {#1\printfield{pages}}
      {\iffieldnum{pages}
        {\int_compare:nNnTF{\str_count:e{\thefield{pages}}}>{4}
          {\addcomma\space\#\printfield{pages}}
          {#1\printfield{pages}}}
        {\addcomma\space\#\printfield{pages}}}
  }
}
\ExplSyntaxOff

\newbibmacro*{journal+novolume}{\printdate
  \iffieldundef{number}{}{\addcomma\space
    \printtext{no.\space}\printfield{number}}\usebibmacro{pages-or-eid}{\addcomma\space}
}

\newbibmacro*{journal+volume}{\mkbibbold{\printfield{volume}}\iffieldundef{number}{}{\mkbibparens{\printfield{number}}}\usebibmacro{pages-or-eid}{\addcolon}\addcomma\space
  \printdate
}

\DeclareBibliographyDriver{article}{\printnames{author}\addperiod\space
  \printfield{title}\addperiod\space
  \iffieldundef{shortjournal}
    {\printfield{journaltitle}}
    {\mkbibemph{\printfield{shortjournal}\isdot}}\addcomma\space
  \iffieldundef{volume}
    {\usebibmacro{journal+novolume}}
    {\usebibmacro{journal+volume}}\addperiod
  \iffieldundef{note}{}{\space
    \printfield{note}\addperiod}
}

\usepackage[noabbrev]{cleveref}

\usepackage[toc,page]{appendix}
\AddToHook{cmd/appendix/before}{\crefalias{section}{appendix}}

\AddToHook{enddocument/afterlastpage}{\immediate\write\@mainaux{\string\gdef\string\appendixcount{\number\value{section}}}}
\newcommand{\appendixheadingname}{\@ifundefined{appendixcount}{Appendix}{\ifnum\appendixcount>1 Appendices\else Appendix\fi}}

\usepackage[shortlabels]{enumitem} 

\newenvironment{alphabetize}[1][]{\enumerate[label=(\alph*),#1]}{\endenumerate}
\setlist{font=\normalfont} 
   
\usepackage[bottom]{footmisc} 
\reversemarginpar 

\usepackage{mleftright}

\NewDocumentCommand\DeclareDelimiter{mmm}
 {
  \NewDocumentCommand{#1}{som}{\def\temp{\ifblank{##3}{\:\cdot\:}{##3}}
  \IfNoValueTF{##2}{
        \IfBooleanTF{##1}{
            #2\temp#3
        }{
            \mleft\ifblank{#2}{.}{#2}\temp\mright\ifblank{#3}{.}{#3}
        }
    }
    {
        \def\size{\csname##2\endcsname}
        \ifblank{#2}{}{\mathopen{\size#2}}\temp
        \ifblank{#3}{}{\mathclose{\size#3}} }
  }
 }

\DeclareDelimiter{\pa}{\lparen}{\rparen}
  
\DeclareDelimiter{\bra}{\lbrack}{\rbrack}

\DeclareDelimiter{\cbra}{\{}{\}}
\DeclareDelimiter{\abs}{\lvert}{\rvert} \DeclareDelimiter{\floor}{\lfloor}{\rfloor}           
\mathtoolsset{centercolon}  

\let\Re\relax \DeclareMathOperator{\Re}{Re} \let\Im\relax \DeclareMathOperator{\Im}{Im}

\newcommand{\inv}{\frac{1}}     

\usepackage{derivative}
\renewcommand{\d}{\odif}
\derivset{\odv}[delims-eval=.|]
\derivset{\pdv}[delims-eval=.|]

\RenewDocumentCommand{\to}{ oe{_} }{\IfValueTF{#1}{
    \IfValueTF{#2}{\xrightarrow[#2]{#1}}{\xrightarrow{#1}}
  }{
    \IfValueTF{#2}{\xrightarrow[#2]{}}{\rightarrow}
  }
}

\newcommand{\N}{\mathbb N}
\newcommand{\Z}{\mathbb Z}
 
\newcommand{\R}{\mathbb R}
\newcommand{\C}{\mathbb C}

\newcommand{\oInt}{\oint\limits}

\usepackage[section]{placeins}

\usepackage{keytheorems} 

\newkeytheoremstyle{example}{
bodyfont=\normalfont,
spacebelow=10pt
}

\newkeytheoremstyle{theorem}{
headindent=6pt,
headfont=\scshape\bfseries,
notefont=\normalfont\mdseries,
bodyfont=\itshape,
notebraces={(}{)},
postheadspace=6pt,
spacebelow=10pt
}

\renewkeytheoremstyle{remark}{
spaceabove=4pt,
headfont=\itshape,
notefont=\mdseries, notebraces={(}{)},
headpunct={.},
bodyfont=\normalfont,
postheadspace=7pt,
spacebelow=10pt
}

\newkeytheoremstyle{proof}{
spaceabove=4pt,
headfont=\itshape,
notefont=\itshape, notebraces={}{},
headpunct={.},
bodyfont=\normalfont,
postheadspace=10pt,
qed = $\blacksquare$,
spacebelow=12pt
}

\makeatother

\newkeytheorem{definition}[name = Definition, numbered=unless unique, numberwithin=section]
\newkeytheorem{example}[name = Example, style = example, numberwithin=section]
\newkeytheorem{theorem}[name = Theorem, style = theorem, numberwithin=section]
\newkeytheorem{proposition}[name = Proposition, style = theorem, numberwithin=section]
\newkeytheorem{corollary}[name = Corollary, style = theorem, numberwithin=section]
\newkeytheorem{lemma}[name = Lemma, style = theorem, numberwithin=section]
\newkeytheorem{proof}[name = Proof, style = proof, numbered=no]
\newkeytheorem{remark}[name = Remark, style = remark]

\newcommand{\W}{\mathcal W}
\newcommand{\D}{\mathbb D}

\DeclareBibliographyCategory{add}

\title{When can a power series be analytically continued?}
\author{Kei Beauduin}
\date{}

\begin{document}

\maketitle

\begin{abstract}
A classical line of research characterizes analytic continuation of power series through analytic interpolation of their coefficients. Under suitable hypotheses, the growth of the interpolating function reflects the geometry of the continuation domain, and conversely. We survey the development of these results from Leau and Le Roy through Lindelöf, Carlson, Dufresnoy--Pisot and Arakelian. We formulate the main results in a unified notation using compactifications of the complex plane. In particular, we give a comprehensive treatment of Carlson's second continuation theorem, which has received little attention, and prove a refinement based on the Laplace transform.
\end{abstract}

\setcounter{tocdepth}{3}\tableofcontents

\section{Introduction}

The equivalence of holomorphicity and analyticity makes power series central objects in complex analysis. Yet a power series is inherently a local representation. This raises a natural inverse problem: given a power series, to what extent can one recover the global analytic function it represents? The identity and monodromy theorems partly address this question: \emph{if} the germ can be continued along every path in a simply connected domain, its continuation is unique and independent of the path \cite{conway1978}.

What these theorems do not determine is \emph{when} a given power series admits analytic continuation. One might expect that no sufficiently general characterization of analytic continuability is possible, and that the question must be settled case by case.

\begin{quote}
\emph{``Whether for a given curve and a given function element there is an analytic continuation along the curve can be a difficult question. Since no degree of generality can be achieved which justifies the effort, no existence theorems for analytic continuations will be proved. Each individual case will be considered by itself.''}

\hfill --- John B. Conway \cite[p.~214]{conway1978}
\end{quote}

This local, \emph{Weierstrassian} viewpoint is conceptually useful but practically limited. Analytic continuation does not generate the function beyond its disk of convergence: the germ already determines it on its full Riemann surface. The challenge is to find that surface and representations valid there. A largely overlooked body of results tackles precisely this problem.

The first such results were obtained in 1898 by Leau \cite{leau1898,leau1899a} and Le Roy \cite{leroy1898,leroy1898a,leroy1899}. In their respective memoirs \cite{leau1899,leroy1900}, they proved several analytic-continuation theorems following a common pattern: a power series
\begin{equation}\label{e:f}
    f(z) := \sum_{n=0}^\infty c_n z^n,
\end{equation}
admits analytic continuation to a given unbounded domain if its coefficients $(c_n)_{n\in\N}\subset\C$ can be interpolated, in the sense that
\begin{equation}\label{e:interp}
    \phi(n) = c_n, \quad n \in \N := \{0, 1, 2, \dots\},
\end{equation}
where $\phi$ is analytic in a suitable region and satisfies appropriate exponential-growth conditions. Conversely, the Taylor coefficients of a function $f$ analytic in an unbounded domain can be interpolated by a function $\phi$ satisfying corresponding regularity. An interpolant $\phi$ satisfying \cref{e:interp} is often called a \emph{coefficient function}\footnote{Bieberbach reintroduced this terminology \cite{bieberbach1955}, although it had appeared earlier in Poukka \cite{poukka1916}.}. Such an interpolant need not be unique, but Carlson's theorem gives an important uniqueness criterion (\Cref{p:carlson}). Mathematical physicists instead use the term \emph{Carlsonian interpolation} \cite{bessis1965,bros1996}. Without growth restrictions similar to that of Carlson's theorem, the mere existence of an interpolant carries little information about analytic continuation. The substance of these results lies in obtaining an interpolant with controlled growth, since that control determines a continuation domain for $f$.

Our aim is to survey results of this kind throughout the literature. Hille's 1929 bibliographical survey \cite{hille1929} collects many of the early references cited here. Further accounts of related results appear in \cite{bieberbach1921,hadamard1926,dienes1957,bieberbach1955,hille1962,arakelian1991,arakelian2003,henrici1991}.

Five conceptual advances deserve particular emphasis: the sufficient conditions of Leau and Le Roy, Wigert's converse, Lindelöf's asymptotic expansions, Carlson's use of the Minkowski support function, and the Dufresnoy--Pisot refinement based on the Laplace transform.

Our principal contribution is a modern treatment of these results using a unified structure and notation based on suitable compactifications of the complex plane $\C$. In particular, we treat comprehensively an especially general theorem due to Carlson (1914) which has received little treatment since its publication. Many special cases of Carlson's theorem have consequently been rediscovered. We also state and prove an extension using the ideas from Dufresnoy and Pisot.

\section{Analytic continuation with bounded singularities}\label{s:bounded}

This section surveys results for which $f$ vanishes at infinity on the Riemann sphere $\hat\C := \C \cup \{\hat\infty\}$ and has a bounded singularity set. In this setting, the function $\phi$ interpolating the coefficients of $f$ is entire and of \emph{exponential type}: the quantity
\[
\sigma := \inf\cbra{\varsigma\in\R : \phi(s) = O(e^{\varsigma |s|})} = \limsup_{|s|\to\infty} \frac{\log |\phi(s)|}{|s|}
\]
is called the \emph{exponential type} of $\phi$ and is finite. It is nonnegative unless $\phi \equiv 0$, in which case $\sigma = -\infty$. The finer \emph{indicator function}, quantifying directional growth, is defined by
\begin{equation}\label{e:indic}
    h_\phi(\theta) := \inf \cbra{h\in\R : \phi(r e^{i\theta}) = O(e^{h r})} = \limsup_{r\to\infty} \frac{\log |\phi(re^{i\theta})|}{r}.
\end{equation}
It takes values in $\bar\R := \R \cup\{-\infty, \infty\}$ and, for an entire function $\phi$, is bounded above by the exponential type of $\phi$. We set it to $\infty$ at angles where it is undefined.

Another growth measure for entire functions is the \emph{order}, defined by
\[
\rho := \inf\cbra{\varrho > 0 : \phi(s) = O(e^{|s|^\varrho})} = \limsup_{|s|\to\infty} \frac{\log \log(e+ |\phi(s)|)}{\log |s|}.
\]
The order is at most $1$ when $\phi$ is of exponential type. For these definitions and their basic properties, see \cite{boas1954,levin1996}. 

For $r\ge 0$, define the disk centered at the origin
\[
\D_r := r\D, \quad \D := \{z : |z| < 1\},
\]
and for $c\in\R$, denote the closed left half-plane by
\[
\Pi_c := \{z\in\C : \Re z \le c\}.
\]

\subsection{Classical results}

The starting point is the following sufficient condition for the analytic continuation of $f$, known as \emph{Leau's theorem} \cite[Chap.~II, §18--20]{leau1899}.

\begin{theorem}[Leau, 1899]\label{t:leau}
    If $\phi(s)$ is an entire function of at most exponential type $0$ satisfying \cref{e:interp}, then the series $f(z)$ of \cref{e:f} is analytically continuable to $\C\setminus\{1\}$.
\end{theorem}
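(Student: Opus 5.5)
We realize $f$ through a Lindelöf-type contour integral,
\[
F(z) := \frac{1}{2i}\int_{\Re s = c} \frac{\phi(s)\,(-z)^{s}}{\sin(\pi s)}\, \d{s}, \qquad -1<c<0,
\]
with the principal branch $(-z)^s = e^{s\log(-z)}$, $\lvert\arg(-z)\rvert<\pi$. The residues of $\pi/\sin(\pi s)$ at $s=n$ are $(-1)^n$, so closing the contour to the right formally returns $\sum_n \phi(n) z^n = f(z)$. The plan has three steps: show $F$ is holomorphic on $\C\setminus[0,\infty)$, identify $F$ with $f$ on $\D\setminus[0,1)$, and then remove the cut $(0,1)$ and handle the ray $(1,\infty)$.

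\textbf{Convergence.} Write $s=c+it$ and $-z = re^{i\theta}$ with $\lvert\theta\rvert<\pi$. Then $\lvert(-z)^s\rvert = r^c e^{-\theta t}$ and $1/\lvert\sin \pi s\rvert \asymp e^{-\pi\lvert t\rvert}$. Since $\phi$ has exponential type $0$, $\lvert\phi(s)\rvert = O(e^{\epsilon\lvert t\rvert})$ for every $\epsilon>0$. The integrand is therefore $O(e^{-(\pi-\lvert\theta\rvert-\epsilon)\lvert t\rvert})$, which converges locally uniformly for $\lvert\theta\rvert\le\pi-\delta$. So $F$ is holomorphic on $\C\setminus[0,\infty)$.

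\textbf{Identification.} For $0<\lvert z\rvert<1$ off the ray, shift the contour to $\Re s = N+\tfrac12$ and collect the residues at $s=0,\dots,N$, which give $\sum_{n\le N}c_nz^n$. To control the shifted integral and the horizontal connecting segments at height $\pm T$, we need $\lvert\sin\pi s\rvert \gtrsim e^{\pi\lvert \Im s\rvert}$ away from the integers, together with $\lvert\phi(s)\rvert\le C_\epsilon e^{\epsilon\lvert s\rvert}$. The term $\lvert z\rvert^{N+1/2}e^{\epsilon N}$ tends to $0$ once $\epsilon<\log(1/\lvert z\rvert)$, so the remainder vanishes and $F=f$ near $0$ off the cut. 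Hence $F$ continues $f$ across every direction except the positive real axis. Note that Carlson-type growth is used only through this uniform bound.

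\textbf{Removing the cut.} Rotating $\arg(-z)$ across $\pm\pi$ is impossible because the decay margin $\pi-\lvert\theta\rvert$ disappears. Instead, I would take the same integral with the branch $(-z)^s = e^{s(\log z \mp i\pi)}$ defined near the positive axis from above and from below. On $(0,1)$ the two boundary values agree because both equal $f$. On $(1,\infty)$ they need not agree, so the continuation is generally not single-valued there; but the theorem only claims continuation to $\C\setminus\{1\}$ along paths avoiding $1$, and each boundary value continues analytically across $(1,\infty)$.

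A cleaner route, which I would actually pursue, uses the type-$0$ hypothesis fully. Exponential type $0$ means $\lvert\phi(s)\rvert = O(e^{\epsilon\lvert s\rvert})$ in \emph{every} direction, so the contour may be tilted: integrate along rays $\arg(s-c)=\pm\alpha$ into the right half-plane. There $1/\sin(\pi s)$ still decays like $e^{-\pi\lvert\Im s\rvert}$ and $\lvert(-z)^s\rvert = e^{\Re s\log r - \theta\Im s}$. A suitable choice of rays gives convergence for $\arg(-z)$ in a range extending past $\pm\pi$ when $\log r\ne0$, i.e.\ when $\lvert z\rvert\neq1$. Patching these representations gives continuation along any path avoiding $z=1$.

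The \textbf{main obstacle} is this last step: choosing the rotated contours and checking the decay estimates uniformly near the positive real axis with $\lvert z\rvert\ne 1$, so that the continuation reaches all of $\C\setminus\{1\}$ rather than just the cut plane. The residue bookkeeping at integers crossed during the rotation is routine.
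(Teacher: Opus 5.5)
Your integral is, up to the branch of the power, the formula \eqref{e:fordlind}. Orient the line downward: with the upward orientation, closing to the right returns $-f$. With that fixed, your first two steps are sound, but they only continue $f$ to $\C\setminus[1,\infty)$, which is \Cref{t:lind02}. Every estimate you use lives in $\Re s\ge c$.

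The real content of Leau's theorem is crossing $(1,\infty)$, and no argument confined to that half-plane can do it. For example, $\phi(s)=2^{-s}$ is bounded on $\Re s\ge c$, so it passes every bound in your proposal, tilted rays included. Yet $f(z)=1/(1-z/2)$ has a pole at $2$. So your assertion that each boundary value continues across $(1,\infty)$ is unsupported. Your claim that the continuation is in general not single-valued there is false. The theorem asserts a single-valued analytic function on the domain $\C\setminus\{1\}$ (compare $f(z)=F(1/(1-z))$ in \Cref{t:wigert}), not mere continuation along paths, which is the different kind of statement in \Cref{t:leausurf,t:leroysurf}.

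Your cleaner route also tilts the wrong way. On $\arg(s-c)=\pm\alpha$ the integrand is bounded by $\exp\bigl(\rho(\cos\alpha\log|z|-(\pi\pm\theta)\sin\alpha+\epsilon)+O(1)\bigr)$, so convergence on both rays means $|\theta|<\pi-\cot\alpha\log|z|$. Going past $|\theta|=\pi$ with $|z|>1$ therefore needs $\alpha\in(\frac\pi2,\pi)$, that is, rays into the \emph{left} half-plane. Even then you would still have to check that the continuations from above and below agree on $(1,\infty)$.

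The missing ingredient is the bound $|\phi(s)|\le C_\epsilon e^{\epsilon|s|}$ for $\Re s\le c$. The simplest way to use it is to shift your line to the left instead of tilting it. For $|z|>1$ with $z\notin[0,\infty)$, move $\Re s=c$ to $\Re s=-N-\frac12$. The remainder is $O(|z|^{-N-1/2}e^{\epsilon N})\to0$ once $\epsilon<\log|z|$. The residues at $s=-1,\dots,-N$ then give $F(z)=-\sum_{n\ge1}\phi(-n)z^{-n}$, which is \eqref{e:fphi(-n)}. Since $\limsup_n|\phi(-n)|^{1/n}\le1$, this Laurent series is analytic on all of $\{|z|>1\}$, including across $(1,\infty)$.

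Now glue three pieces: $f$ on $\D$, your $F$ on $\C\setminus[0,\infty)$, and this series on $\{|z|>1\}$. The overlaps $\D\setminus[0,1)$ and $\{|z|>1\}\setminus(1,\infty)$ are connected, and $\D\cap\{|z|>1\}=\emptyset$. This yields a single-valued analytic function on $\C\setminus\{1\}$, which even vanishes at $\hat\infty$.

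The route the paper indicates avoids cuts entirely. Type $0$ makes the Laplace transform \eqref{e:Phi} converge on $\C\setminus\{0\}$. P\'olya's formula \eqref{e:fleroy} with $\Gamma=\{|p|=r\}$ is then analytic in $z$ on $\hat\C\setminus\exp(\bar\D_r)$ for each small $r>0$, and the representations for $r'<r$ agree there by Cauchy's theorem. Letting $r\to0$ gives continuation to $\hat\C\setminus\{1\}$ directly; this is the case $K=\{0\}$ of \Cref{t:dufr}.
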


The case in which $\phi$ merely has order less than $1$ is often the result attributed to Leau \cite[§9]{hadamard1926} and was rediscovered by Le Roy \cite[§26]{leroy1900}. Leau also proved separately the case of order $1$ and exponential type $0$ \cite[Chap.~II, §20]{leau1899}.\footnote{He imposed a condition on the coefficients of $\phi$ that later formulas relating coefficients and exponential type show to be equivalent to exponential type $0$ \cite{levin1996}.}

\begin{example}
    To illustrate \Cref{t:leau}, let $\phi$ be a polynomial. A simple calculation then shows that $f$ is a rational function whose only possible pole is at $1$.
\end{example}

Wigert's converse supplied the next conceptual step \cite{wigert1900}. It shows that the criteria of Leau and Le Roy do more than provide occasional sufficient conditions: with the necessary additional assumption that $f$ vanishes at infinity, they characterize the problem completely. This result is known as the \emph{Wigert--Leau theorem}.

\begin{theorem}[Wigert, 1900]\label{t:wigert}
    The following are equivalent:
    \begin{alphabetize}
    \item There exists an entire function $\phi(s)$ of exponential type at most $0$ satisfying \cref{e:interp}.
    \item The series $f(z)$ of \cref{e:f} is analytically continuable to $\hat\C\setminus\{1\}$ with $f(\hat\infty) = 0$.
    \item The series of \cref{e:f} can be written as $f(z) = F\pa{\inv{1-z}}$ where $F$ is an entire function and $F(0) = 0$.
    \end{alphabetize}
\end{theorem}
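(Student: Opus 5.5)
We prove the cycle (a)$\Rightarrow$(c)$\Rightarrow$(b)$\Rightarrow$(a).

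\emph{(a)$\Rightarrow$(c).} Write $\phi$ in the Newton (binomial) basis, $\phi(s)=\sum_{k\ge0} a_k\binom{s}{k}$ with $a_k=\Delta^k\phi(0)=\sum_{j}(-1)^{k-j}\binom kj c_j$. Since
\[
\sum_{n\ge0}\binom nk z^n=\frac{z^k}{(1-z)^{k+1}},
\]
we formally get
\[
f(z)=\sum_k a_k\frac{z^k}{(1-z)^{k+1}}=\sum_k a_k w(w-1)^k,\qquad w=\inv{1-z},
\]
using $z/(1-z)=w-1$. So the candidate is $F(w)=w\sum_k a_k (w-1)^k$, which visibly satisfies $F(0)=0$ once the series converges.

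The key estimate is that exponential type $0$ forces $\limsup|a_k|^{1/k}=0$. To see this, represent
\[
a_k=\frac{k!}{2\pi i}\oint\frac{\phi(s)\,ds}{s(s-1)\cdots(s-k)}
\]
over a circle of radius $Rk$ with $R$ large. For $|s|=Rk$ one has $|s(s-1)\cdots(s-k)|\ge \prod_{j\le k}(Rk-j)$, which is roughly $(Rk)^{k+1}e^{-1/(2R)\cdot k}$ up to constants. Meanwhile $|\phi|\le e^{\epsilon Rk}$, so
\[
|a_k|^{1/k}\lesssim \frac{k!^{1/k}\,e^{\epsilon R}}{Rk}\approx \frac{e^{\epsilon R-1}}{R}.
\]
Choosing first $R$ large and then $\epsilon$ small (with $\epsilon R$ fixed) makes this arbitrarily small. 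Hence $F$ is entire.

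It remains to justify the interchange of summations near $z=0$. For $|z|$ small, $|z/(1-z)|$ is small, and with $|a_k|\le C_\delta\delta^k$ the double series converges absolutely. This gives $f(z)=F(1/(1-z))$ on a neighbourhood of $0$.

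\emph{(c)$\Rightarrow$(b).} $F$ is entire, so $F(1/(1-z))$ is holomorphic on $\hat\C\setminus\{1\}$, including at $\hat\infty$, where $1/(1-z)\to0$. Its value there is $F(0)=0$, and it agrees with the series near $0$, so it is the required continuation.

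\emph{(b)$\Rightarrow$(a).} Here we use a Cauchy–Hankel representation. Let $f$ be holomorphic on $\hat\C\setminus\{1\}$ with $f(\hat\infty)=0$. For every $\delta>0$ and every $n$,
\[
c_n=\frac1{2\pi i}\oint_{|z|=r}f(z)z^{-n-1}\,dz .
\]
Deform the contour to the circle $\Gamma_\delta:\ |z-1|=\delta$ (oriented negatively). This is legitimate because $f(z)z^{-n-1}=O(|z|^{-2})$ at infinity, using $f(\hat\infty)=0$ and $n\ge0$. Define
\[
\phi(s):=-\frac{1}{2\pi i}\oint_{\Gamma_\delta}f(z)\,z^{-s-1}\,dz,
\]
where $z^{-s}=e^{-s\log z}$ with the principal branch, which is holomorphic on $\Gamma_\delta$ for $\delta<1$. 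By Cauchy's theorem $\phi$ does not depend on $\delta$. It is entire in $s$ and satisfies $\phi(n)=c_n$.

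On $\Gamma_\delta$ we have $|\log z|\le 2\delta$, so $|\phi(s)|\le C_\delta e^{2\delta|s|}$. Since $\delta$ is arbitrary, $\phi$ has exponential type at most $0$.

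\emph{Main obstacle.} The delicate step is the coefficient estimate in (a)$\Rightarrow$(c), namely converting type $0$ into superexponential decay of the Newton coefficients $a_k$. Everything else is contour bookkeeping.
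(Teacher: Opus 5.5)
Your argument is correct apart from a sign slip in (b)$\Rightarrow$(a). Pushing $|z|=r$ out through $\hat\infty$ gives $c_n=\frac{1}{2\pi i}\oint f(z)z^{-n-1}\,dz$ over the \emph{negatively} oriented circle $|z-1|=\delta$. Combining that orientation with your extra minus sign therefore yields $\phi(n)=-c_n$; try $f(z)=1/(1-z)$, which must give $\phi\equiv1$. Either orient $\Gamma_\delta$ positively, as in the paper's $\phi(s)=-\frac{1}{2\pi i}\oint_{\partial U}f(z)z^{-s-1}\,dz$, or drop the sign. With that fixed, this direction is the Dufresnoy--Pisot necessity argument $\phi(s)=\frac{1}{2\pi i}\oint_\Gamma f(e^{-p})e^{ps}\,dp$ for $K=\{0\}$, written in the variable $z=e^{-p}$. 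Your bound $|\log z|\le2\delta$ is the $K=\{0\}$ case of shrinking $\Gamma$ onto $K$.

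Where you genuinely differ is (a)$\Rightarrow$(c). The paper gives no separate proof. It takes (a)$\Rightarrow$(b) from Leau's theorem, which in its framework is the case $K=\{0\}$ of P\'olya's sufficiency argument for \Cref{t:dufr}: $f(z)=\frac{1}{2\pi i}\oint_\Gamma\frac{\Phi(p)}{1-ze^p}\,dp$ with $\Gamma$ a small circle about $0$. Type $0$ makes $\Phi$ holomorphic on $\hat\C\setminus\{0\}$, and $f(\hat\infty)=0$ because $1/(1-ze^p)\to0$ uniformly on $\Gamma$. The paper then regards (b)$\Leftrightarrow$(c) as the M\"obius change $w=1/(1-z)$.

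You instead build $F$ directly from the Newton series, realizing the (a)$\Rightarrow$(c) link the paper attributes to Wigert. This is elementary, gives the explicit expansion $F(w)=w\sum_k\Delta^k c_0\,(w-1)^k$, and delivers holomorphy at $\hat\infty$ for free. Your key lemma is also a one-liner in the paper's framework: Pincherle's formula gives $\Delta^k\phi(0)=\frac{1}{2\pi i}\oint_{|p|=\delta}\Phi(p)(e^p-1)^k\,dp=O((e^\delta-1)^k)$ for every $\delta>0$.

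The price of the Newton route is that it does not generalize. For $\phi(s)=e^{as}$ one has $\Delta^k\phi(0)=(e^a-1)^k$, so the corresponding series represents $f(z)=1/(1-e^az)$ only where $|z|<|1-z|/|e^a-1|$. The Laplace-transform route, by contrast, extends unchanged to \Cref{t:poukka,t:carlsonb,t:dufr}, where (c) has no analogue.
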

The implication from (a) to (b) is Leau's theorem (\Cref{t:leau}), together with the regularity at $\hat\infty$ that makes the converse possible. Since every isolated singularity is either essential or a pole, the equivalence of (b) and (c) is simply a reformulation. Wigert, who was unaware of Leau's work, proved the equivalence of (a) and (c). Faber rediscovered the result in 1902 \cite{faber1903a,faber1903}, in the form of the equivalence of (a) and (b).

Wigert's formulation raises the natural question of how the growth of the entire functions $\phi$ and $F$ is related. Faber compared their orders \cite{faber1911}, a result independently recovered in \cite{gelfond1929,slobodetzky1941}. Macintyre and Wilson obtained the corresponding result for their types \cite{macintyre1947}, rediscovered in \cite{yardimci2002}. Further refinements appear in \cite{wilson1961,alzugaray2001}.

Le Roy extended Leau's theorem to functions of nonzero exponential type \cite[§27]{leroy1900}. Poukka in 1916 \cite{poukka1916} and Hardy in 1917 \cite{hardy1920} found converses. In this setting, the continuation region of $f$ has a more intricate form, and condition (c) of \Cref{t:wigert} has no straightforward analog.

\begin{theorem}\label{t:poukka}
    Let $\sigma \in [0, \pi)$. The following are equivalent:
    \begin{alphabetize}
    \item There exists an entire function $\phi(s)$ of exponential type at most $\sigma$ satisfying \cref{e:interp}.
    \item The series $f(z)$ of \cref{e:f} is analytically continuable to $\hat\C\setminus\exp(\bar\D_\sigma)$ with $f(\hat\infty) = 0$.
    \end{alphabetize}
\end{theorem}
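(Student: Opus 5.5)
The plan is to prove both directions through the Borel--Laplace correspondence between $\phi$ and its Borel transform, with the change of variables $z = e^w$.

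\emph{From (a) to (b).} Let $\phi$ be entire of exponential type at most $\sigma<\pi$. Its Borel transform
\[
B(w) := \sum_{k\ge0} \frac{\phi^{(k)}(0)}{w^{k+1}}
\]
is holomorphic for $|w|>\sigma$ and vanishes at infinity. Fix $\sigma<r<\pi$. Pólya's representation gives
\[
\phi(s) = \frac{1}{2\pi i}\oint_{|w|=r} e^{sw} B(w)\,\d{w}.
\]
Insert this into $f(z)=\sum_n \phi(n) z^n$ and sum the geometric series under the integral:
\[
f(z) = \frac{1}{2\pi i}\oint_{|w|=r} \frac{B(w)}{1-ze^{w}}\,\d{w}.
\]
For small $|z|$ the interchange is justified by uniform convergence. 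The right-hand side is holomorphic in $z$ as long as $1/z \notin \exp(\partial\D_r)$. Since $r<\pi$, the map $w\mapsto e^w$ is injective on $\bar\D_r$, so $\exp(\bar\D_r)$ is a Jordan domain containing $1$. The integral therefore defines a holomorphic function of $z$ on the component of $\{z: 1/z\notin \exp(\partial \D_r)\}$ containing $0$.

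This gives a continuation to $\{z : 1/z \notin \exp(\bar \D_r)\}$, which needs correcting. The statement's region is $\hat\C\setminus \exp(\bar\D_\sigma)$, so the singularities should sit at $z$ with $z\in\exp(\bar\D_\sigma)$, not $1/z$. I will fix the normalisation: since $\exp(\bar\D_\sigma)$ is invariant under $z\mapsto 1/z$ (because $\bar\D_\sigma=-\bar\D_\sigma$), the two descriptions coincide. Letting $r\downarrow\sigma$ gives continuation to $\hat\C\setminus\exp(\bar\D_\sigma)$. At $z\to\hat\infty$ the integrand tends to $0$ uniformly, so $f(\hat\infty)=0$.

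\emph{From (b) to (a).} Suppose $f$ is holomorphic on $\hat\C\setminus\exp(\bar\D_\sigma)$ with $f(\hat\infty)=0$. Cauchy's formula on a small circle, deformed outward through $\hat\C$, gives
\[
c_n = -\frac{1}{2\pi i}\oint_{\Gamma} f(z)\, z^{-n-1}\,\d{z}.
\]
Here $\Gamma$ is a contour surrounding $\exp(\bar\D_\sigma)$ inside the domain of holomorphy. Deforming past $\hat\infty$ costs nothing, because $f(z)z^{-n-1}=O(z^{-2})$ there; this is exactly where $f(\hat\infty)=0$ is used. Take $\Gamma=\exp(\gamma)$ with $\gamma=\partial\D_r$, $\sigma<r<\pi$, and choose the branch of $\log$ determined by $\bar\D_r$, which is possible since $r<\pi$. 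Then
\[
\phi(s) := -\frac{1}{2\pi i}\oint_{\partial\D_r} f(e^{w})\, e^{-sw}\,\d{w}
\]
is entire and satisfies $\phi(n)=c_n$. It obeys $|\phi(s)|\le C_r e^{r|s|}$, so its type is at most $r$ for every $r>\sigma$. A priori this produces a different $\phi_r$ for each $r$. By Cauchy's theorem the integral is independent of $r$ in $(\sigma,\pi)$, so the same $\phi$ has type at most $\sigma$.

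\emph{Main obstacle.} The delicate point is the topology in the first direction. I must check that $0$ and $\hat\infty$ lie in the same component of $\hat\C\setminus\exp(\partial\D_r)$ complement data. This requires $\exp(\bar\D_r)$ to be a Jordan domain not containing $0$ and $\hat\infty$, and $r<\pi$ guarantees exactly that. I also need to verify that the integral formula agrees with the power series near $0$ and stays consistent as $r$ varies. I would handle the latter by showing the formulas for different $r$ agree via Cauchy's theorem in the annulus $r<|w|<r'$, where $B$ is holomorphic and the kernel $1/(1-ze^{w})$ is holomorphic for the relevant $z$.
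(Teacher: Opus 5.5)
Your proof is correct and follows essentially the paper's route: it is the Pólya/Dufresnoy--Pisot argument specialized to $K=\bar\D_\sigma$. The Le Roy--Pólya formula \eqref{e:fleroy}, with your $B=\Phi$, gives (a)$\Rightarrow$(b). The inverse integral $\phi(s)=\frac{1}{2\pi i}\oint_\Gamma f(e^{-p})e^{ps}\,dp$, which is your formula after $p=-w$, gives (b)$\Rightarrow$(a), and your independence of the integral in $r\in(\sigma,\pi)$ plays the role of the paper's choice of $\Gamma$ arbitrarily close to $K$. The \emph{correction} detour in the first direction is unnecessary: the kernel $(1-ze^w)^{-1}$ is singular exactly on $\exp(-\partial\D_r)=\exp(\partial\D_r)$, so the component containing $0$ is $\hat\C\setminus\exp(\bar\D_r)$ from the start.
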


Carlson had already described \Cref{t:poukka} in his 1914 thesis \cite[§3, §8]{carlson1914} as a special case of a much more general theorem; see also \cite{carlson1921,bieberbach1955,levin1980,levin1996}. Carlson's decisive innovation was to use the Minkowski support function in polar coordinates to describe general continuation regions, defined for $K\subset\C$ by
\begin{equation}\label{e:supp}
    \kappa_K(\theta) := \sup_{p\in K} \Re(p e^{-i\theta}).
\end{equation}
The vertical width of $K$ is
\[
\sup_{p\in K} \Im p - \inf_{p\in K} \Im p = \kappa_K\pa{\frac\pi2} + \kappa_K\pa{-\frac\pi2} \ge 0.
\]
Thus the condition
\begin{equation}\label{e:vert}
    \kappa_K\pa{-\frac\pi2} + \kappa_K\pa{\frac\pi2} < 2\pi,
\end{equation}
expresses that $K$ is strictly contained in a horizontal strip of width $2\pi$.
\begin{theorem}[Carlson, 1914]\label{t:carlsonb}
    Let $K$ be a compact convex subset of $\C$ satisfying \cref{e:vert}. The following are equivalent:
    \begin{alphabetize}
    \item There exists an entire function $\phi(s)$ of exponential type satisfying \cref{e:interp} and $h_\phi(\theta) \le \kappa_K(-\theta)$ for all $\theta$.
    \item The series $f(z)$ of \cref{e:f} is analytically continuable to $\hat\C\setminus \exp(-K)$ with $f(\hat\infty) = 0$.
    \end{alphabetize}
\end{theorem}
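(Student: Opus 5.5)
The natural tool is the Borel transform together with the Pólya representation of entire functions of exponential type. The point of \cref{e:vert} is that $\exp$ is injective on a neighbourhood of $-K$. This lets us pass between the $s$-plane (where $\phi$ lives) and the $w$-plane with $z=e^{-w}$ (where $f$ lives).

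\emph{(a)$\Rightarrow$(b).} By Pólya's theorem, an entire $\phi$ of exponential type with $h_\phi(\theta)\le\kappa_K(-\theta)$ has Borel transform $B(w)=\int_0^\infty \phi(t)e^{-wt}\,dt$ (suitably continued) holomorphic off the conjugate indicator diagram. Here $h_\phi(\theta)\le \kappa_K(-\theta)=\kappa_{\bar K}(\theta)$, and I will match conventions so that the singular set is $-K$ (up to conjugation; I will check signs carefully). Moreover
\[
\phi(s)=\frac{1}{2\pi i}\oint_\Gamma B(w)e^{sw}\,dw ,
\]
with $\Gamma$ a contour surrounding the singular set, taken inside the $\epsilon$-neighbourhood $K_\epsilon$ of $-K$. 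This holds because $\kappa$ of the indicator diagram is at most $\kappa_K$, so the diagram sits inside $-K$ after the sign flip. Then for small $|z|$,
\[
f(z)=\sum_n c_nz^n=\frac1{2\pi i}\oint_\Gamma B(w)\sum_n (ze^{w})^n\,dw=\frac1{2\pi i}\oint_\Gamma \frac{B(w)}{1-ze^{w}}\,dw .
\]
The right side is holomorphic in $z$ as long as $z^{-1}\notin \exp(\Gamma)$. Choosing $\epsilon$ small, \cref{e:vert} guarantees $\Gamma$ lies in a horizontal strip of width $<2\pi$, so $\exp$ is injective near $\Gamma$. Deforming $\Gamma$ toward $-K$ then continues $f$ to $\hat\C\setminus \exp(\pm K)$; the correct sign will be fixed by the convention $z^{-1}=e^{w}$. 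At $z=\hat\infty$ the integrand is $O(1/z)$, so $f(\hat\infty)=0$.

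\emph{(b)$\Rightarrow$(a).} Conversely, suppose $f$ is holomorphic on $\hat\C\setminus E$, $E=\exp(-K)$, with $f(\hat\infty)=0$. For $n\ge 0$, Cauchy's formula on a large circle, deformed to a contour $C$ around $E$, gives
\[
c_n=\frac{1}{2\pi i}\oint_{|z|=r}f(z)z^{-n-1}\,dz=-\frac1{2\pi i}\oint_C f(z)z^{-n-1}\,dz ,
\]
the sign coming from the residue at $\infty$, which vanishes because $f(\hat\infty)=0$ and $n\ge0$. Substituting $z=e^{-w}$ along a contour $\tilde C$ near $K$ in a strip of width $<2\pi$ (injectivity again from \cref{e:vert}) gives $z^{-n-1}dz=-e^{(n+1)w}e^{-w}\,dw=-e^{nw}\,dw$. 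Hence
\[
c_n=\frac1{2\pi i}\oint_{\tilde C} f(e^{-w})e^{nw}\,dw=:\phi(n),
\]
and $\phi(s)$, defined by the same integral with $s$ in place of $n$, is entire. For $s=re^{i\theta}$,
\[
|e^{sw}|=e^{r\Re(we^{i\theta})}\le e^{r(\kappa_K(-\theta)+\epsilon)}
\]
on $\tilde C$. Letting $\tilde C$ shrink to $K$ yields $h_\phi(\theta)\le\kappa_K(-\theta)+\epsilon$ for every $\epsilon$, hence $h_\phi(\theta)\le\kappa_K(-\theta)$. The exponential type is then finite because $K$ is compact.

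The main obstacle is in (a)$\Rightarrow$(b): the Pólya/Borel machinery must be invoked with the precise inclusion of the conjugate indicator diagram in the appropriate reflection of $K$. This uses that $K$ is convex, so it equals the intersection of its supporting half-planes and is therefore determined by $\kappa_K$. The sign conventions $(\theta\mapsto-\theta$, $w\mapsto -w$, conjugation$)$ also need to be tracked consistently. Some care is needed too when $K$ has empty interior, where contours must be taken around $K$ rather than inside it. \Cref{t:poukka} ($K=\bar\D_\sigma$) and \Cref{t:wigert} ($K=\{0\}$) serve as sanity checks for the signs.
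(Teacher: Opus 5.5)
Your proposal follows the paper's route: the paper obtains \Cref{t:carlsonb} from the Dufresnoy--Pisot theorem (\Cref{t:dufr}). Sufficiency there goes through Pólya's formula \eqref{e:fleroy} with $\Gamma$ encircling $K$. Necessity goes through $\phi(s)=\frac{1}{2\pi i}\oint_\Gamma f(e^{-p})e^{ps}\,dp$ together with Cauchy's formula, and the indicator bound comes from shrinking $\Gamma$ toward $K$, exactly as you do in (b)$\Rightarrow$(a).

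The sign bookkeeping you left open in (a)$\Rightarrow$(b) should be fixed, and as written it is wrong: no reflection is involved. Since $\kappa_K(-\theta)=\kappa_{K^\dagger}(\theta)$, the hypothesis places the indicator diagram inside $K^\dagger$. Hence the conjugate diagram, which carries the singularities of $B=\Phi$, lies in $K$ itself (check with $\phi(s)=e^{as}$: the pole of $B$ is at $a$, and $K=\{a\}$). So $\Gamma$ must surround $K$, not $-K$, and $z^{-1}=e^{w}$ then gives exactly $\hat\C\setminus\exp(-K)$.
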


\begin{example}\label{x:e^a}
    The simplest example is $\phi(s)=e^{as}$. Then $c_n=e^{an}$ and $f(z)=1/(1-e^az)$, whose pole is $e^{-a}$. Moreover,
    \[
    h_\phi(\theta)= \Re(a)\cos\theta - \Im(a)\sin\theta = \kappa_{\{a\}}(-\theta)
    \]
    so the corresponding set is $K=\{a\}$. This explains the map $K\mapsto\exp(-K)$, and the width restriction ensures that the exponential map is injective on $K$.
\end{example}

The hypothesis that $\phi$ is of exponential type does not follow solely from the bound on $h_\phi$; see, for example, the counterexample in \cite{newman1976}.

The case in which $K$ is a disk of radius $\sigma < \pi$ is the particular instance of \Cref{t:poukka}, where $\kappa_K(\theta) = \sigma$.

In 1912, Pringsheim \cite[§7]{pringsheim1912} gave a sufficient condition in the case
\[
K = \cbra{x+iy : |y| < \frac\pi{2}, \enspace \pa{1 - \inv{\varrho^2}}e^{-x} + e^x \le 2 \cos y}, \quad \varrho \ge 1.
\]
When $\varrho > 1$, $K$ is bounded and his result is a special case of \Cref{t:carlsonb}, where $\exp(-K)$ corresponds to the disk of radius $\varrho/(\varrho^2-1)$ centered at $\varrho^2/(\varrho^2-1)$. In the limiting case $\varrho = 1$, however, $K$ is unbounded and $\exp(-K) = \{z\in\C : \Re z\ge 1/2\}$. Carlson's second continuation theorem in \Cref{s:carlson} applies instead.

\medskip

\Cref{t:carlsonb} and \eqref{e:vert} are closely related to Carlson's well-known uniqueness theorem \cite[Thm.~C]{carlson1914}. We present the following modern form \cite{bieberbach1955}.
\begin{proposition}[Carlson, 1914]\label{p:carlson}
    If $\phi$ is entire and of exponential type in $\Re s \ge 0$, satisfies
    \begin{equation}\label{e:hvert1}
        h_\phi\pa{-\frac\pi2} + h_\phi\pa{\frac\pi2} < 2\pi
    \end{equation}
    and vanishes at the natural integers, then $\phi$ vanishes identically.
\end{proposition}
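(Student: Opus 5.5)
Since $\phi$ vanishes at every $n\in\N$, I divide by $\sin(\pi s)$ and work with
\[
\psi(s) := \frac{\phi(s)}{\sin(\pi s)},
\]
which is holomorphic in $\Re s > -1$ (the zeros of $\sin\pi s$ there are exactly the natural integers). The idea is to show that $\psi$ is so small along the imaginary axis that a Phragmén--Lindelöf argument in the right half-plane forces $\phi\equiv 0$. Carlson's theorem is classically stated with the endpoint $h_\phi(\pm\pi/2)$ sum $<2\pi$; in the symmetric normalization it says $h_\phi(\pm\pi/2)<\pi$. I reduce to that case first.

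\textbf{Step 1 (normalization).} Set $a:=\tfrac12\pa{h_\phi(\tfrac\pi2)-h_\phi(-\tfrac\pi2)}$ and replace $\phi$ by $\tilde\phi(s):=e^{ias}\phi(s)$.
\begin{itemize}
\item On the natural integers $e^{ias}$ does not vanish, so $\tilde\phi$ still vanishes exactly where $\phi$ does on $\N$.
\item The exponential type in $\Re s\ge0$ is preserved.
\item Since $|e^{ia(iy)}|=e^{-ay}$, the indicator becomes $h_{\tilde\phi}(\pm\tfrac\pi2)=h_\phi(\pm\tfrac\pi2)\mp a$. Both values now equal half the original sum, hence are $<\pi$.
\end{itemize}
So I may assume $h_\phi(\pm\pi/2)\le \pi-2\delta$ for some $\delta>0$. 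If one of the indicator values is $-\infty$, I simply use any finite upper bound in its place.

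\textbf{Step 2 (bounds for $\psi$).} On $s=iy$ we have $|\sin(\pi iy)|\asymp e^{\pi|y|}$, so $|\psi(iy)|=O(e^{-\delta|y|})$. More precisely, $|\phi(iy)|\le Ce^{(\pi-\delta)|y|}$ for all $y$; the limsup definition of $h_\phi$ gives this for large $|y|$, and continuity handles the rest.

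Inside the half-plane $\Re s\ge 0$, $\psi$ is of exponential type provided we stay away from the zeros of the sine. The standard estimate is
\[
|\sin \pi s|\ge c\,e^{\pi|\Im s|} \quad\text{on the circles } |s-n-\tfrac12|=\tfrac12\cdot\text{const},
\]
or, more simply, away from discs of radius $\tfrac14$ about the integers. Combining this with the maximum principle on those discs shows that $\psi$ is of exponential type in $\Re s\ge0$.

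\textbf{Step 3 (Phragmén--Lindelöf).} Define
\[
g(s):=\psi(s)\,e^{\delta s}.
\]
\begin{itemize}
\item On the imaginary axis, $|e^{\delta iy}|=1$, so $g$ is bounded there; in fact it decays like $e^{-\delta|y|}$.
\item On the positive real axis, the growth of $g$ is at most exponential.
\end{itemize}
Phragmén--Lindelöf in each quarter-plane, an angle of $\pi/2$ in which exponential type means order $1<2$, gives boundedness provided $g$ is bounded on both boundary rays. The positive real axis is the problem.

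To handle it, I instead apply Phragmén--Lindelöf to $\psi(s)e^{\tau s}$ for a large parameter $\tau$, in the following standard form. Let $H(\theta)$ be the indicator of $\psi$ on $|\theta|\le\pi/2$. Then $H$ is trigonometrically convex: on an interval of length $\pi$, $H(\theta)\le A\cos\theta+B\sin\theta$, with the constants determined by the endpoint values $H(\pm\pi/2)\le-\delta$. Applying this Phragmén--Lindelöf indicator inequality to $\psi$ on the half-plane gives
\[
H(0)\le\ ?
\]
and the endpoints alone do not constrain $H(0)$. This is the main obstacle.

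The standard fix (Pólya/Boas) is to show that $h_\psi(\pm\pi/2)<0$ together with exponential type forces $\psi\equiv0$. One takes $g_\tau(s)=\psi(s)e^{\tau s}$ for arbitrary $\tau>0$. On the imaginary axis $|g_\tau|=|\psi|$, which is bounded. Phragmén--Lindelöf in each quadrant bounds $g_\tau$ by its supremum on the imaginary axis, once one knows the growth is less than order $2$ in a quadrant. This uses only the exponential type; a quadrant is an angle of opening $\pi/2$, so order $<2$ suffices. Hence $|\psi(s)|\le Me^{-\tau\Re s}$ with $M=\sup_y|\psi(iy)|$ independent of $\tau$. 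Letting $\tau\to\infty$ at any point with $\Re s>0$ gives $\psi=0$ there, hence $\phi\equiv 0$.

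The subtle point, which I expect to need the most care, is that the decay on the axis, coming from Step 1 and $\delta>0$, is not even needed beyond boundedness. The real work is the Step 2 claim that $\psi$ keeps exponential type despite the division by $\sin\pi s$; I would prove it via the minimum-modulus bound for $\sin$ off small discs around the integers plus the maximum principle inside those discs.
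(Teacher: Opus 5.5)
Your Steps 1 and 2 are fine. The factor $e^{ias}$ equalizes the two endpoint values, and $\psi=\phi/\sin\pi s$ satisfies $|\psi(s)|\le Ce^{k|s|}$ in $\Re s\ge0$ for some $k$, together with $|\psi(iy)|\le Ce^{-\delta|y|}$.

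The gap is in Step 3. A quadrant $0\le\arg s\le\frac\pi2$ has two boundary rays, and Phragmén--Lindelöf there needs a bound on the positive real axis as well. On that axis you only know $|\psi(x)e^{\tau x}|\le Ce^{(k+\tau)x}$. You noticed this yourself (\emph{the positive real axis is the problem}) and then dropped it. In the full half-plane, whose boundary is just $i\R$, Phragmén--Lindelöf fails for functions of exponential type, as $e^{\tau s}$ shows.

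Your remark that decay on the axis is not needed beyond boundedness shows that the argument proves too much. Take $\phi(s)=\sin\pi s$: it vanishes on $\N$ and has $h_\phi(\pm\frac\pi2)=\pi$. Your $\psi$ is then identically $1$, which is bounded on $i\R$ and of exponential type, so your Step 3 would conclude $\sin\pi s\equiv0$. The strict inequality in the hypothesis, i.e.\ $\delta>0$, must be used essentially at exactly this point. What you need is the half-plane fact that a nonzero function of exponential type in $\Re s\ge0$ satisfies $h(\frac\pi2)+h(-\frac\pi2)\ge0$, and this requires proof.

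The classical repair applies Phragmén--Lindelöf twice, with $s=x+iy$.
\begin{enumerate}
\item The function $\psi(s)e^{-(k+i\delta)s}$ has modulus $|\psi(s)|e^{-kx+\delta y}$. This is at most $C$ on both the positive real and the positive imaginary axis. Phragmén--Lindelöf in the first quadrant (opening $\frac\pi2$, order $1<2$) bounds it by $C$ there. Using $e^{-(k-i\delta)s}$ in the fourth quadrant gives $|\psi(s)|\le Ce^{kx-\delta|y|}$ throughout $\Re s\ge0$.
\item For $\tau>0$, this gives $|\psi(s)e^{\tau s}|\le Ce^{(k+\tau)x-\delta|y|}\le C$ whenever $\beta\le|\arg s|\le\frac\pi2$, where $\tan\beta=(k+\tau)/\delta$. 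Since $2\beta<\pi$, Phragmén--Lindelöf in the sector $|\arg s|\le\beta$ extends the bound $C$ to the whole half-plane.
\end{enumerate}
Hence $|\psi(s)|\le Ce^{-\tau\Re s}$ with $C$ independent of $\tau$, and your final step $\tau\to\infty$ then goes through. The decay $e^{-\delta|y|}$ is exactly what lets the second sector have opening strictly less than $\pi$.
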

Consequently, among entire functions of exponential type in $\Re s\ge0$ satisfying a common bound such as
\begin{equation}\label{e:vertsym}
    h_\phi\pa{\pm\frac\pi2} < \pi,
\end{equation}
the interpolant $\phi$ of $(c_n)_{n\in\N}$ is unique, since the difference between any two such interpolants satisfies the assumptions of Carlson's theorem. This applies, in particular, to the interpolants in \Cref{t:carlsonb} for a fixed set $K$. Hardy gave an elementary proof of the original, equivalent case \eqref{e:vertsym} \cite{hardy1920}, and Riesz later simplified it \cite{riesz1920}.

\subsection{The Laplace transform and the Dufresnoy--Pisot theorem}\label{s:dufr}

In 1929, P\'olya introduced a framework for studying entire functions of exponential type through their complex Laplace transforms \cite[Chap.~2]{polya1929}. If $\phi(s)=\sum_{n=0}^\infty \phi_n s^n$, its Laplace transform is
\begin{equation}\label{e:Phi}
    \Phi(p) := \sum_{n=0}^\infty \frac{\phi_n n!}{p^{n+1}}.
\end{equation}
If $\phi$ has exponential type $\sigma$, then \cref{e:Phi} converges for $|p| > \sigma$, i.e., in $\C\setminus\bar\D_\sigma$. Using this framework, Dufresnoy and Pisot proved the following extension of Carlson's theorem, which no longer requires convexity \cite{dufresnoy1951}.

\begin{theorem}[Dufresnoy--Pisot, 1951]\label{t:dufr}
    Let $K$ be a compact subset of $\C$ with connected complement such that \cref{e:vert} holds. The following are equivalent:
    \begin{alphabetize}
    \item There exists an entire function $\phi(s)$ of exponential type satisfying \cref{e:interp} such that $\Phi(p)$ is analytically continuable to $\C\setminus K$.
    \item The series $f(z)$ of \cref{e:f} is analytically continuable to $\hat\C\setminus \exp(-K)$ with $f(\hat\infty) = 0$.
    \end{alphabetize}
\end{theorem}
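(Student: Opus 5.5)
The plan is to use the Pólya--Borel duality between $\phi$ and its Laplace transform $\Phi$. The key relation is that $f$ is obtained from $\Phi$ by summing over the $2\pi i\Z$-translates of the exponential kernel. For $|p|$ large, term-by-term integration gives
\[
\phi(s) = \frac{1}{2\pi i}\oInt_{\Gamma} \Phi(p)e^{sp}\,\d{p},
\]
where $\Gamma$ is any contour surrounding the singularities of $\Phi$. In (a), $\Gamma$ may be taken to surround $K$ inside the domain of $\Phi$. Since the complement of $K$ is connected, the continued $\Phi$ is single-valued on $\C\setminus K$, and the formula persists by Cauchy.

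\textbf{From (a) to (b).} Substituting $s=n$ and summing the geometric series gives, for $|z|$ small,
\[
f(z)=\sum_n c_n z^n = \frac{1}{2\pi i}\oInt_\Gamma \frac{\Phi(p)}{1-ze^{p}}\,\d{p}.
\]
To justify the interchange we need $|ze^p|<1$ on $\Gamma$; this holds because $\Gamma$ is compact. The right-hand side is analytic in $z$ as long as $z\notin \exp(-\Gamma)$, i.e.\ $1/z$ avoids $e^{\Gamma}$. It tends to $0$ as $z\to\hat\infty$, since $\frac{1}{1-ze^p}=O(1/|z|)$ uniformly on $\Gamma$.

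Deforming $\Gamma$ toward $K$ continues $f$ to every $z$ with $-\log z$ not in $K+2\pi i\Z$, i.e.\ to $\hat\C\setminus\exp(-K)$. The geometric point is that under \cref{e:vert}, $K$ can be placed inside an open horizontal strip $S$ of height $<2\pi$, and $\exp$ is injective on $S$. For a given $z_0\notin \exp(-K)$, we choose $\Gamma\subset S$ surrounding $K$ but not the point $-\log z_0\in S$, if that point lies in $S$. The other translates $-\log z_0+2\pi i k$ lie outside $S$ and hence off $\Gamma$ automatically. Connectedness of $\C\setminus K$ guarantees that such a $\Gamma$ exists, namely one separating $K$ from $-\log z_0$. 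We then check that the resulting values patch together: different admissible contours differ by a region containing no pole of the integrand, so the continuation is single-valued.

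\textbf{From (b) to (a).} We construct $\phi$ directly. Let $g(w):=f(e^{-w})$, which is analytic off $K+2\pi i\Z$. We set
\[
\phi(s):=\frac{1}{2\pi i}\oInt_{\gamma} f(z)z^{-s-1}\,\d{z},
\]
where $\gamma$ is a contour in $\C\setminus\exp(-K)$ around the singular set, with $z^{-s}$ defined by a branch of $\log$ on a neighbourhood of $\gamma$.

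The more convenient form substitutes $z=e^{-p}$, which gives
\[
\phi(s)=\frac{1}{2\pi i}\oInt_{\Gamma} g(p)\,e^{sp}\,\d{p},
\]
with $\Gamma\subset S$ a closed curve surrounding $K$. We need $\Gamma$ to be closed in the $p$-plane, i.e.\ the $z$-contour must not wind around $0$ in a way that breaks periodicity. This works because $\exp(-K)$ is contained in a sector of opening $<2\pi$. Then $\phi$ is entire and of exponential type.

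At $s=n$, the residue calculus (expanding $\gamma$ to $\infty$, using $f(\hat\infty)=0$, and comparing with a small circle at $0$) gives $\phi(n)=c_n$. Moreover, the Laplace transform of $\phi$ for $|p|$ large is
\[
\Phi(q)=\frac{1}{2\pi i}\oInt_\Gamma \frac{g(p)}{q-p}\,\d{p},
\]
which by shrinking $\Gamma$ continues to $\C\setminus K$.

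\textbf{Main obstacle.} The hardest step is the topology in (b)$\Rightarrow$(a): producing, in the $z$-plane, a contour $\gamma$ around $\exp(-K)$ that avoids $0$, whose preimage under $z=e^{-p}$ is a closed curve in the strip $S$, and for which the residue computation at $0$ and $\hat\infty$ works. I would handle it by first choosing $\Gamma\subset S$ around $K$ in the $p$-plane and then pushing it forward by $p\mapsto e^{-p}$, which is injective on $S$. This gives a Jordan-type curve enclosing $\exp(-K)$ and not $0$. Its exterior contains $0$ and $\hat\infty$, so $\phi(n)=c_n$ follows from residues at $0$, with the residue at $\hat\infty$ vanishing because $f(\hat\infty)=0$ and $n\ge0$.
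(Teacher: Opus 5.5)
Your proposal is correct and follows the paper's route. Sufficiency goes through P\'olya's kernel formula $f(z)=\frac{1}{2\pi i}\oint_\Gamma \frac{\Phi(p)}{1-ze^{p}}\,dp$, derived from Pincherle's inversion, and necessity goes through $\phi(s)=\frac{1}{2\pi i}\oint_\Gamma f(e^{-p})e^{ps}\,dp$ with the Cauchy/residue comparison at $0$ and $\hat\infty$, as also carried out in the appendix.

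Two bookkeeping points need fixing. First, $p\mapsto e^{-p}$ preserves orientation, so the $z$-plane form must carry a minus sign, $\phi(s)=-\frac{1}{2\pi i}\oint_{\exp(-\Gamma)}f(z)z^{-s-1}\,dz$, consistent with your $p$-plane formula. Second, connectedness of $\C\setminus K$ is what makes the continuations of $\Phi$ and $f$ uniquely defined; it does not make $\Phi$ single-valued, which is already part of hypothesis (a).
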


\begin{example}
    Similarly to \Cref{x:e^a}, consider $\phi(s) = e^{as} + e^{bs}$ with $\abs{\Im(a-b)} < 2\pi$. Then
    \[
    f(z) = \inv{1-e^a z} + \inv{1-e^b z}, \quad \Phi(p) = \inv{p-a} + \inv{p-b}.
    \]
    In \Cref{t:carlsonb}, we would require a compact convex set $K$ containing $a$ and $b$, for example the segment $[a, b]$, and continuation of $f$ would be guaranteed outside $\exp(-[a, b])$.
\end{example}

Here, the locations of the singularities of $\Phi$ give information on the location of the singularities of $f$, and vice versa. The correspondence is suggested by the discrete and continuous transforms below
\begin{equation}
    f(e^{-p}) = \sum_{n=0}^\infty \phi(n) e^{-pn}, \quad \Phi(p) = \int_0^\infty \phi(s) e^{-ps} \d s, \quad \Re p > h_\phi(0).
\end{equation}

Dufresnoy and Pisot proved necessity using
\begin{equation}
    \phi(s) = \inv{2\pi i} \oInt_\Gamma f(e^{-p}) e^{ps} \d p,
\end{equation}
where $\Gamma$ encircles $K$. The Cauchy integral formula then shows that $\phi$ interpolates $(c_n)_{n\in\N}$. P\'olya had already established sufficiency \cite[§43]{polya1929}, using
\begin{equation}\label{e:fleroy}
    f(z) = \inv{2\pi i} \oInt_\Gamma \frac{\Phi(p)}{1-ze^p} \d p,
\end{equation}
which follows from \emph{Pincherle's formula} for the inverse Laplace transform,
\begin{equation}\label{e:pincherle}
    \phi(s) = \inv{2\pi i} \oInt_\Gamma \Phi(p) e^{ps} \d p.
\end{equation}
\Cref{e:pincherle} implies $h_\phi(\theta) \le \kappa_K(-\theta)$ and therefore shows that \Cref{t:dufr} implies \Cref{t:carlsonb}. Given $\phi$, one can choose the convex set $K = \W(h_\phi)^\dagger$, where
\begin{equation}\label{e:wulff}
    \W(h) := \bigcap_{\theta\in\Theta} e^{i\theta}\Pi_{h(\theta)}  = \cbra{z\in\C : \forall\theta\in\Theta,\, \Re(z e^{-i\theta}) \le h(\theta)}
\end{equation}
is the \emph{Wulff shape}, and the dagger denotes complex conjugation. For this choice of $K$, we have $h_\phi(\theta) = \kappa_K(-\theta)$. Thus indicator functions are precisely support functions. Furthermore, \emph{P\'olya's theorem} identifies $\W(h_\phi)^\dagger$ with the convex hull of the singularities of $\Phi$ \cite{levin1996,beauduin2026b}.

P\'olya was not the first to describe \cref{e:fleroy}: Le Roy had already mentioned the formula briefly \cite[§28]{leroy1900} and had partially described the direct implication of \Cref{t:dufr} in 1900. Hardy used Le Roy's techniques to prove \Cref{t:poukka} \cite{hardy1920}. Building on the same approach, Brickman independently rediscovered the Dufresnoy--Pisot theorem in 1965 \cite{brickman1965}.

An expansion similar to
\begin{equation}\label{e:fphi(n)}
    f(z) = \sum_{n=0}^\infty \phi(n) z^n, \quad |z| < e^{-h_\phi(0)},
\end{equation}
follows from \cref{e:fleroy}:
\begin{equation}\label{e:fphi(-n)}
    f(z) = -\sum_{n=1}^\infty \phi(-n) z^{-n}, \quad |z| > e^{h_\phi(\pi)}.
\end{equation}
For $K=\W(h_\phi)^\dagger$, the region $e^{-h_\phi(0)} \le |z| \le e^{h_\phi(\pi)}$ is the smallest closed annulus containing $\exp(-K)$. It lies outside the open regions of convergence guaranteed by \cref{e:fphi(n),e:fphi(-n)}, although either series may still converge at individual boundary points.

\section{Analytic continuation with unbounded singularities}\label{s:u}

This section treats the case in which $f$ has unbounded singularities. In analogy with \Cref{s:bounded}, we need a notion of $f$ vanishing at infinity away from the directions in which its singularities accumulate. This requires a different compactification of $\C$, the \emph{radial compactification},
\begin{equation}
    \tilde\C = \C \cup \cbra{\tilde \infty e^{i\theta} : \theta \in \Theta},
\end{equation}
where $\Theta := \R/2\pi \Z$ is the set of angles. We view $\arg$ as a function from $\C\setminus\{0\}$ to $\Theta$ and, for $A\subset\Theta$, define the sector
\[
S_A := \arg^{-1}(A) = \cbra{r e^{i\theta} : r > 0,\, \theta \in A},
\]
where $A$ is an interval in $\Theta$.

In the theorems below, the single point $\hat\infty$ is replaced by directional infinities $\tilde\infty e^{iA} := \{\tilde\infty e^{i\theta} : \theta\in A\}$. By exponential type at $\tilde\infty e^{iA}$, we mean that $\phi(s) = O(e^{h|s|})$ as $s \to \tilde\infty e^{i A}$. The latter convergence is understood in terms of neighborhoods. Equivalently, $|s| \to \infty$ and every accumulation point of $\arg s$ belongs to $A$. We say that $\phi$ is of exponential type for $\Re s>c$ if it is exponentially bounded in every smaller closed half-plane $\Re s\ge \gamma$, where the bound may depend on $\gamma>c$. Thus $s$ may tend to either imaginary infinity, but not to the boundary infinities "$c\pm i\infty$".

Our formulation is particularly convenient when $A$ is open. Asymptotics at $\tilde\infty e^{i(-\frac\pi2,\frac\pi2)}$, for example, are usually expressed by requiring the bound as $|s|\to\infty$ with $s\in S_{(-\frac\pi2+\epsilon,\frac\pi2-\epsilon)}$ for every $\epsilon>0$. Arakelian's notion of \emph{inner exponential type} \cite{arakelian1985} is the supremum of the exponential types of $\phi$ in these subsectors and therefore coincides with exponential type at $\tilde\infty e^{i(-\frac\pi2,\frac\pi2)}$ in our terminology.

We also use the analogous notion of \emph{polynomial type} for $f$:
\[
c := \inf\cbra{\gamma\in\R : f(z) = O(|z|^\gamma) \text{ as } z\to\tilde\infty e^{i A}} = \limsup_{z\to\tilde\infty e^{i A}} \frac{\log |f(z)|}{\log |z|}.
\]

\subsection{The Le Roy--Lindelöf theorem}\label{s:leroylind}

\begin{theorem}[Lindelöf, 1902]\label{t:lind02}
    If $\phi(s)$ is analytic and of exponential type at most $0$ in a right half-plane and satisfies \cref{e:interp}, then the series $f(z)$ of \cref{e:f} is analytically continuable to $\C\setminus [1, \infty)$.
\end{theorem}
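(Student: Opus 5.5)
We prove \Cref{t:lind02} with Lindelöf's contour-integral representation, the half-plane analogue of \cref{e:fleroy}. After translating $s$ and adjusting finitely many coefficients (which only adds a polynomial to $f$), we may assume $\phi$ is analytic and of exponential type $0$ in $\Re s > -\tfrac12$. Hence for every $\epsilon>0$ we have $\phi(s)=O_\epsilon(e^{\epsilon|s|})$ on $\Re s\ge -\tfrac12+\delta$. For $z$ in the slit plane we write $-z = e^{w}$ with $|\Im w|<\pi$, that is, $w=\log(-z)$ on the principal branch. We then consider
\[
F(z) := -\inv{2i}\int_{-\frac12-i\infty}^{-\frac12+i\infty} \phi(s)\,\frac{(-z)^{s}}{\sin(\pi s)}\,\d s,
\]
taken along the vertical line $\Re s=-\tfrac12$, or along $\Re s=-\tfrac12+\delta$ if needed.

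The first step is convergence and analyticity. On the line $s=-\tfrac12+it$ we have $|(-z)^s| = |z|^{-1/2}e^{-t\arg(-z)}$ and $|\sin\pi s|\asymp e^{\pi|t|}$. Also $|\phi(s)|\le C_\epsilon e^{\epsilon|t|}$. The integrand is therefore bounded by $C e^{(\epsilon+|\arg(-z)|-\pi)|t|}$. This decays exponentially once $\epsilon<\pi-|\arg(-z)|$, uniformly for $z$ in compact subsets of $\C\setminus[0,\infty)$. Hence $F$ is analytic on $\C\setminus[0,\infty)$.

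The second step identifies $F$ with $f$ for small $|z|$, say $0<|z|<1$ with $z\notin[0,1)$. We shift the contour to the right, to $\Re s=N+\tfrac12$. The residues crossed are at $s=n$, where $\operatorname{Res}_{s=n}\frac{1}{\sin\pi s}=\frac{(-1)^n}{\pi}$. Each contributes $-\inv{2i}\cdot(-2\pi i)\cdot\phi(n)(-z)^n\frac{(-1)^n}{\pi}=\phi(n)z^n$, since moving the contour to the right gives a clockwise orientation. The sign convention has to be checked, but the normalization is chosen so that this works out. The shifted integral is $O(|z|^{N+1/2})$ times a uniformly bounded integral, because on $\Re s = N+\tfrac12$ we have $|\phi(s)|\le C_\epsilon e^{\epsilon|s|}$ and the $\sin$ factor gives decay $e^{-(\pi-|\arg(-z)|-\epsilon)|t|}$. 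The delicate point is that $e^{\epsilon|s|}$ contributes a factor $e^{\epsilon N}$. This is harmless since $|z|^N e^{\epsilon N}\to0$ for $|z|<e^{-\epsilon}$, and $\epsilon$ can be taken arbitrarily small. The horizontal connecting segments at $|t|=T$ vanish as $T\to\infty$ by the same exponential decay. Letting $N\to\infty$ gives $F(z)=\sum_n\phi(n)z^n=f(z)$ on the slit disk, so $F$ continues $f$.

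The final step extends the continuation across $(0,1)$. This is automatic: $f$ is already analytic on $\D$, and it agrees with $F$ on $\D\setminus[0,1)$, which is connected. Gluing $f$ on $\D$ with $F$ on $\C\setminus[0,\infty)$ therefore gives an analytic function on $\C\setminus[1,\infty)$. The main obstacle is the uniform estimate in the second step, that is, controlling the shifted integrals and the horizontal segments. It relies on exponential type $0$ holding uniformly in the closed half-plane, including near the vertical directions, so that the zero-type bound competes favorably against $1/\sin\pi s$. This is exactly where type $0$ is needed. A positive type $\sigma$ would only give continuation to the smaller sector $|\arg(-z)|<\pi-\sigma$.
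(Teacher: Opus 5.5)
Your proof is correct, and your $F$ is precisely the integral \cref{e:fordlind} quoted in the paper. Indeed, $e^{2\pi i s}-1=2ie^{\pi i s}\sin(\pi s)$, so taking $\arg z\in(0,2\pi)$ gives $z^s/(e^{2\pi i s}-1)=(-z)^s/(2i\sin\pi s)$, and reversing the orientation yields your prefactor $-\frac{1}{2i}$ with $c=-\frac12$.

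The paper itself does not argue this way. It credits Lindelöf's 1902 proof to the Abel--Plana formula \cref{e:abelplana}, and cites \cref{e:fordlind} only as Ford's tool, later reused by Lindelöf for \Cref{t:leroylind}. The proof it actually writes out that covers this statement is the appendix proof of \Cref{t:carlsonlapl}. There $K$ is the closed negative real axis in $\bar\C$, so that $\exp(-K)\cap\C=[1,\infty)$. That route goes through the Laplace transform. The bound $h_\phi\le0$ on $[-\frac\pi2,\frac\pi2]$ continues $\Phi$ to $\C\setminus(-\infty,0]$ by rotating the integration ray. The truncation $\Phi_\gamma$ supplies decay as $\Re p\to-\infty$. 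Finally, $f$ is represented with the kernel $(1-ze^p)^{-1}$ on a contour around $K$, as in \cref{e:fleroy}.

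Your Mellin--Barnes route is more economical for this statement. One vertical line gives analyticity on $\C\setminus[0,\infty)$, and a residue sum identifies $F$ with $f$, with no Laplace transform or contour-construction lemma. It also gives more for free: putting the line at $\Re s=\gamma\in(c,0)$ yields $f(z)=O(|z|^\gamma)$ in closed subsectors, and shifting it left past the negative integers yields \cref{e:lindasymp}.

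The appendix route buys generality instead. Your kernel only sees the vertical growth $h_\phi(\pm\frac\pi2)$ and the radius of convergence, so it can only produce a sector about the negative axis together with a disk. The Laplace-transform argument handles arbitrary closed, possibly nonconvex $K\subset\bar\C$, and it also gives the converse.
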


Le Roy proved this theorem under the hypothesis that $\phi$ has order less than $1$ in a right half-plane \cite[§32]{leroy1900}. In \cites{leroy1898}[§35]{leroy1900}, he claimed an extension in which $\phi$ is of exponential type at most $0$ and need only be analytic in a narrow sector. P\'olya later refuted this claim \cite[p.~155]{bieberbach1955}. Lindelöf proved the theorem for exponential type at most $0$ under the correct right-half-plane hypothesis \cite{lindelof1902,lindelof1903}, using the \emph{Abel--Plana formula} \cite[§13.14]{hardy1949}
\begin{equation}\label{e:abelplana}
    f(z) = \frac{\phi(0)}{2} + \int_0^\infty \phi(s) z^s \d s + i \int_0^\infty \frac{\phi(it)z^{it} - \phi(-it)z^{-it}}{e^{2\pi t}-1} \d t,
\end{equation}
defined for some branch of the complex logarithm. Ford subsequently gave an independent proof of a more restrictive version of \Cref{t:lind02} \cite{ford1903}, using the closely related formula
\begin{equation}\label{e:fordlind}
    f(z) = \int_{c+i\infty}^{c-i\infty} \frac{\phi(s) z^s}{e^{2\pi is}-1} \d s, \quad c\in(-1, 0).
\end{equation}

Lindelöf had in fact first found this formula in a different context \cite{lindelof1902b,lindelof1903a}.\footnote{For a modern treatment, see \cite{flajolet2010}.} Building on Ford's insight, he then proved the following extension of \Cref{t:lind02} \cite[§53]{lindelof1905}, known as the \emph{Le Roy--Lindelöf theorem}.

\begin{theorem}[Lindelöf, 1905]\label{t:leroylind}
    For some $\sigma\in [0, \pi)$, if $\phi(s)$ is analytic and of exponential type $\sigma$ in a right half-plane and satisfies \cref{e:interp}, then $f(z)$ defined by \cref{e:f} is analytically continuable to $S_{(\sigma, 2\pi-\sigma)}$.
\end{theorem}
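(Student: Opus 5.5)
We prove \Cref{t:leroylind} with the Lindelöf--Ford integral \cref{e:fordlind}. The point is to show that this integral converges and is holomorphic on a domain larger than the disk of convergence. After translating, we may assume that $\phi$ is analytic and of exponential type $\sigma$ in $\Re s > c_0$ for some $c_0<0$. The integral in \cref{e:fordlind} would need $\phi$ to be defined on the line $\Re s = c$ with $c\in(-1,0)$, so we shall replace that line by a contour $L$ that separates the integers $n\ge n_0$ from the remaining integers. We choose $n_0$ so large that $c_0 < n_0 - \tfrac12$ and take $L := \{\Re s = n_0-\tfrac12\}$, traversed from $n_0-\tfrac12+i\infty$ to $n_0-\tfrac12-i\infty$. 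The finitely many terms $\sum_{n<n_0} c_n z^n$ form a polynomial and do not affect continuation. Hence it suffices to treat $g(z) := \sum_{n\ge n_0}\phi(n)z^n$.

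\textbf{Step 1: the representation near $0$.} Write $z^s = e^{s\log z}$, with $\log z = \log|z| + i\arg z$ and $\arg z\in(\sigma, 2\pi-\sigma)$. Consider
\[
G(z) := \int_{L} \frac{\phi(s)\,z^s}{e^{2\pi i s}-1}\d s .
\]
On $L$, put $s = x+iy$. Then $|z^s| = |z|^x e^{-y\arg z}$, and $|e^{2\pi i s}-1|$ is comparable to $1$ as $y\to-\infty$ and to $e^{-2\pi y}$ as $y\to+\infty$. With $|\phi(s)|\le Ce^{(\sigma+\epsilon)|y|}$, the integrand is $O(e^{(\sigma+\epsilon-\arg z)|y|})$ as $y\to+\infty$. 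As $y\to-\infty$ it is $O(e^{(\sigma+\epsilon-(2\pi-\arg z))|y|})$. Both bounds decay exponentially, uniformly for $\arg z$ in compact subsets of $(\sigma,2\pi-\sigma)$ and $|z|$ in compact subsets of $(0,\infty)$. Therefore $G$ is holomorphic on the sector $S_{(\sigma,2\pi-\sigma)}$, viewed as a subset of the Riemann surface of the logarithm but with a fixed branch of $\arg$ in that interval, so it is single-valued there.

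\textbf{Step 2: $G = g$ for small $|z|$ in the sector.} For $0<|z|<e^{-\sigma-\epsilon}$, shift the contour to the right across the poles $s=n$, $n\ge n_0$. The residue of $\phi(s)z^s/(e^{2\pi is}-1)$ at $s=n$ is $\phi(n)z^n/(2\pi i)$. Since $L$ is oriented downward, it sits to the left of the poles with the correct sign. The contour of integration is the boundary of the truncated rectangle $n_0-\tfrac12 \le \Re s\le N+\tfrac12$, $|\Im s|\le T$. Letting $T\to\infty$ first is allowed by the same decay estimates. Then $G(z) = \sum_{n_0\le n\le N}\phi(n)z^n + \int_{L_N}\cdots$, where $L_N := \{\Re s = N+\tfrac12\}$. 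On $L_N$ we have $|1/(e^{2\pi is}-1)|$ bounded, and we need the integral over $L_N$ to tend to $0$. The factor $|z|^{N+1/2}$ beats $|\phi(N+\tfrac12+iy)|\le Ce^{(\sigma+\epsilon)(N+|y|)}$ precisely because $|z|e^{\sigma+\epsilon}<1$, while the $y$-integral converges by Step 1. Hence $G=g$ on an open subset of the sector, and $G$ is the required analytic continuation of $g$, hence of $f$, to $S_{(\sigma,2\pi-\sigma)}$.

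\textbf{Main obstacle.} The delicate point is Step 2 together with the uniformity in Step 1. Exponential type in the half-plane gives $|\phi(s)|\le C_\epsilon e^{(\sigma+\epsilon)|s|}$ only in closed subhalf-planes, and the estimate must be uniform along the whole vertical lines $L_N$ and on the horizontal segments at height $\pm T$. We must also check that the decay of $1/(e^{2\pi is}-1)$ (like $e^{-2\pi|y|}$ in the upper half, bounded in the lower half) combines with $|z^s| = |z|^xe^{-y\arg z}$ to give decay in both directions exactly when $\sigma<\arg z<2\pi-\sigma$. This is also where the hypothesis $\sigma<\pi$ is needed, since otherwise the interval $(\sigma,2\pi-\sigma)$ is empty. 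Keeping $\epsilon$ small relative to the distance of $\arg z$ from $\sigma$ and $2\pi-\sigma$ handles this, and the horizontal segments vanish as $T\to\infty$ by the same bounds.
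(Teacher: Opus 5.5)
Your proof is correct and follows the route the paper attributes to Lindelöf, namely the Ford--Lindelöf integral \cref{e:fordlind}. Moving the line to $\Re s=n_0-\frac12$ and splitting off the polynomial $\sum_{n<n_0}c_nz^n$ is a harmless variant of taking $c\in(-1,0)$.

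One slip needs fixing. Since $|e^{2\pi is}|=e^{-2\pi y}$, the factor $|e^{2\pi is}-1|$ is comparable to $1$ as $y\to+\infty$ and to $e^{2\pi|y|}$ as $y\to-\infty$, not the reverse as you state in Step~1 and in your last paragraph. On $L_N$ it is this decay as $y\to-\infty$, not mere boundedness, that makes the $y$-integral converge. The integrand bounds you actually write down are nonetheless the correct ones.
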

Ford later treated the case $\sigma = 0$ \cite{ford1910}. The case of a non-symmetric sector follows under suitable changes of variables \cite{mkrtchyan2026}.

A further conceptual advance was Lindelöf's connection between analytic continuation and asymptotic expansion at infinity. More precisely, he established the following finite asymptotic expansion for $f$ \cite[§55]{lindelof1905}. Let $c<0$ be nonintegral and suppose that $\phi$ is analytic and of exponential type $\sigma$ in a right half-plane containing $\{s : \Re s\ge c\}$. Then
\begin{equation}\label{e:lindasymp}
    f(z)= -\sum_{c \le n < 0} \phi(n) z^n + o(|z|^c) = -\sum_{n=1}^{\floor{-c}} \phi(-n) z^{-n} + o(|z|^c)
\end{equation}
as $|z|\to \infty$ in $S_{(\sigma, 2\pi-\sigma)}$. In particular, if $c<0$ can be chosen arbitrarily negative, then $f$ has a \emph{Poincaré asymptotic expansion}. Although it was discovered earlier, \cref{e:lindasymp} extends \cref{e:fphi(-n)} to cases in which $\phi$ cannot be chosen entire and of exponential type.

Wigert's converse, together with the similarity between Leau's theorem and the Le Roy--Lindelöf theorem, suggested that the latter should admit one as well. Oseen and Svensson made partial progress soon after Lindelöf \cite{oseen1906,svensson1908}, and Bernstein gave a complete converse in 1926 \cite{bernstein1926}.

\begin{theorem}\label{t:leroylindconv}
    Let $c\in [-1, 0)$, $R > 0$ and $\sigma\in [0, \pi)$. The following are equivalent:
    \begin{alphabetize}
    \item There exists an analytic function $\phi(s)$ of exponential type for $\Re s > c$ satisfying \cref{e:interp} and $h_\phi(\theta) \le \sigma \abs{\sin \theta} - \log(R) \cos \theta$ for $\theta\in [-\frac\pi2, \frac\pi2]$.
    \item The series $f(z)$ of \cref{e:f} is analytically continuable to $\D_R \cup S_{(\sigma, 2\pi-\sigma)}$ and is of polynomial type at most $c$ at $\tilde\infty e^{i(\sigma, 2\pi-\sigma)}$.
    \end{alphabetize}
\end{theorem}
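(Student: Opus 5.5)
For (a)$\Rightarrow$(b) I will use Lindelöf's contour representation \cref{e:fordlind}, shifted to a vertical line $\Re s=\gamma$ with $\gamma\in(c,0)$:
\[
f(z) = \int_{\gamma+i\infty}^{\gamma-i\infty} \frac{\phi(s)\, z^s}{e^{2\pi i s}-1}\, \d s .
\]
This identity is first verified for small $|z|$ by closing the contour to the right and summing residues at $s=n\in\N$. The bound $h_\phi(\theta)\le -\log(R)\cos\theta$ near $\theta=0$ makes the residue series $\sum\phi(n)z^n$ converge for $|z|<R$. Writing $z=re^{i\omega}$ with $\omega\in(0,2\pi)$, so that $|z^s| = r^{\Re s} e^{-\omega \Im s}$, the integrand on $\Re s=\gamma$ is bounded by
\[
r^\gamma\, e^{(\sigma+\epsilon)|t| - \omega t - 2\pi t^+}
\]
up to constants. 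This uses $h_\phi(\pm\frac\pi2)\le\sigma$ and $|e^{2\pi is}-1|\asymp e^{2\pi\max(0,-t)}$ (with care at the sign conventions). The integral therefore converges locally uniformly for $\omega\in(\sigma,2\pi-\sigma)$, which gives the continuation to $S_{(\sigma,2\pi-\sigma)}$ with $f=O(|z|^\gamma)$ uniformly in closed subsectors. Letting $\gamma\downarrow c$ yields polynomial type at most $c$. The continuation to $\D_R$ comes directly from the power series, and the two pieces agree on their overlap by uniqueness. Exponential type for $\Re s>c$ is exactly what legitimizes shifting the line and closing the contour, with arcs vanishing on a sequence of radii avoiding the integers.

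For (b)$\Rightarrow$(a), following Bernstein, I will define the interpolant by a Hankel-type integral:
\[
\phi(s) = \frac{1}{2\pi i}\oInt_{\mathcal{H}} f(z)\, z^{-s-1}\, \d z .
\]
Here $\mathcal{H}$ runs from $\tilde\infty e^{i(\sigma+\delta)}$ in toward the origin, circles it inside $\D_R$ at radius $\rho<R$, and goes back out to $\tilde\infty e^{i(2\pi-\sigma-\delta)}$, so that it wraps around the omitted sector containing the positive axis. The polynomial-type hypothesis gives $|f(z)z^{-s-1}|\lesssim |z|^{c+\epsilon-\Re s-1}$ on the rays, so the integral converges and is analytic for $\Re s>c$. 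At $s=n$ the integrand is single valued, and deforming the contour onto the circle of radius $\rho$ gives $\phi(n)=c_n$ by Cauchy's formula. The rays still contribute, but they cancel because $z^{-n-1}$ has no monodromy and $f$ is analytic on $S_{(\sigma,2\pi-\sigma)}$, so the two rays can be closed through the sector. The arc at infinity is negligible since $\Re s=n>c$.

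The growth estimate is the main obstacle. On the circle of radius $\rho$,
\[
|z^{-s-1}| \le \rho^{-\Re s-1}\, e^{2\pi|\Im s|},
\]
which is too crude. Instead, the arg-dependence must be controlled with arguments ranging only over $[\sigma+\delta, 2\pi-\sigma-\delta]$ measured from the cut along the positive axis. The contribution in direction $t=\Im s$ then carries a factor $e^{|t|(\sigma+\delta)}$ once the branch of $z^{-s}$ is centered appropriately, i.e.\ $\arg z\in(-\pi,\pi]$ shifted so that the positive axis has argument $0$. I will first try splitting $\mathcal H$ into the two rays, which give the $\sigma|\sin\theta|$ term via $e^{\mp(\sigma+\delta) t}$, and the small arc through the negative axis. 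The arc near the positive axis must be rotated to radius close to $R$ using analyticity in $\D_R$, which gives the $-\log(R)\cos\theta$ term through $\rho^{-\Re s}$ with $\rho\uparrow R$. Combining these bounds and letting $\delta\to0$, $\rho\to R$ gives $h_\phi(\theta)\le\sigma|\sin\theta|-\log(R)\cos\theta$ on $[-\frac\pi2,\frac\pi2]$, and exponential type in each half-plane $\Re s\ge\gamma>c$.
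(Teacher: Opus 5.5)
For (a)$\Rightarrow$(b) you take a genuinely different route from the paper. The paper derives this theorem from \Cref{t:carlsonlapl} with $K=\{\Re p\le-\log R,\ \lvert\Im p\rvert\le\sigma\}$. There one forms $\Phi_\gamma(p)=\int_\gamma^{\gamma\pm i\infty}\phi(s)e^{-ps}\,\mathrm{d}s\ll e^{-\gamma\Re p}$ and inverts it on a contour $\Gamma$ around $K$. Summing against $z^n$ gives $f(z)=\frac1{2\pi i}\oint_{\Gamma+\log z}\Phi_\gamma(p-\log z)(1-e^p)^{-1}\,\mathrm{d}p$, and the paper estimates this integral. Your Lindelöf--Ford integral on $\Re s=\gamma$ is more economical for this particular $K$. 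The sector comes directly from $\lvert z^s\rvert=r^{\Re s}e^{-\omega\Im s}$, and the bound $O(\lvert z\rvert^\gamma)$ comes from the factor $r^\gamma$, with no Laplace transform, contour construction or Fubini step. The price is generality. Your integral sees only the growth of $\phi$ in the two vertical directions, and the disk is supplied separately by the series, so the argument is tied to the half-strip geometry. The paper's contour around $K$ handles any closed $K$ with connected complement and gives \Cref{t:carlsonu} at the same time. Your displayed bound also has a sign slip: the denominator contributes $e^{-2\pi\max(0,-t)}$, matching your own $\lvert e^{2\pi is}-1\rvert\asymp e^{2\pi\max(0,-t)}$, not $e^{-2\pi t^+}$.

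Your converse is the paper's construction, $\phi(s)=-\frac1{2\pi i}\oint_{\partial U}f(z)z^{-s-1}\,\mathrm{d}z$ (written there in $u=-\log z$), with the same Cauchy argument for $\phi(n)=c_n$. However, your description of the contour is inconsistent. The arc of $\mathcal H$ must be the short arc $\{\rho e^{i\theta}:\lvert\theta\rvert\le\sigma+\delta\}$ through the positive axis. Suppose instead it went around the origin through the negative axis, as ``Hankel-type'' and ``the small arc through the negative axis'' suggest. Then $\mathcal H$ would bound $\{\lvert z\rvert>\rho\}\cap S_{(\sigma+\delta,2\pi-\sigma-\delta)}$, where $f(z)z^{-s-1}$ is analytic and $O(\lvert z\rvert^{\gamma-\Re s-1})$ for $\gamma\in(c,\Re s)$. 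So $\mathcal H$ could be pushed off to infinity, giving $\phi\equiv0$. With the short arc, the principal branch is continuous on $\mathcal H$ and $\lvert\arg z\rvert\le\sigma+\delta$ on all of it. Hence $\lvert z^{-s-1}\rvert\le\lvert z\rvert^{-\Re s-1}e^{(\sigma+\delta)\lvert\Im s\rvert}$ at once, which gives $\lvert\phi(s)\rvert\ll\rho^{-\Re s}e^{(\sigma+\delta)\lvert\Im s\rvert}$ on each $\Re s\ge\gamma'>c$. Letting $\rho\uparrow R$ and $\delta\downarrow0$ yields the indicator bound; this is exactly the paper's estimate with $\Gamma$ hugging $K$. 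There is no real obstacle here, and the full-circle bound $e^{2\pi\lvert\Im s\rvert}$, the splitting and the recentering are all unnecessary.

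Finally, one step needs (a) in a stronger form than its literal reading, in your proof and equally in the paper's bound on $\Phi_\gamma$. Integrating on $\Re s=\gamma<0$ requires $\lvert\phi(\gamma+it)\rvert\le C_\epsilon e^{(\sigma+\epsilon)\lvert t\rvert}$. Closing your contour to the right, or shifting $\gamma$, requires $\lvert\phi(s)\rvert\le C_\epsilon R^{-\Re s}e^{(\sigma+\epsilon)\lvert\Im s\rvert+\epsilon\lvert s\rvert}$ uniformly on $\Re s\ge\gamma$; a crude bound $e^{A\lvert s\rvert}$ does not make the arcs vanish. Neither bound follows from limsups along rays through the origin. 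For example, $\phi(s)=\exp\bigl(-2ia(s-c)/(e^{2\pi(s-c)}-1)\bigr)$ with $a>0$ is of exponential type for $\Re s>c$ and has $h_\phi=0$ on $(-\frac\pi2,\frac\pi2)$. Yet it grows at rate $2a/(e^{2\pi(x-c)}-1)$ along $\Re s=x$, which exceeds $h_\phi(\frac\pi2)$ for every $x<0$. In fact, your converse construction together with Carlson's uniqueness theorem shows that (b) fails for this $\phi$ with $R=1$, $\sigma=h_\phi(\frac\pi2)$ and $a$ small. So take (a) in the uniform form $\lvert\phi(s)\rvert\le R^{-\Re s}e^{\sigma\lvert\Im s\rvert+o(\lvert s\rvert)}$ as $s\to\tilde\infty e^{i[-\frac\pi2,\frac\pi2]}$. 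This is the form the paper records after Bernstein's formula and effectively uses.
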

As Bernstein later acknowledged \cite{bernstein1928}, Carlson had already proved this case in his 1914 thesis. In fact, Carlson's result was much broader and is the subject of the next section.

Bernstein's proof of the converse employs the following formula \cites{bernstein1926}:
\begin{equation}
    \phi(s) = \frac{\sin(\pi s)}{\pi} \int_0^{\infty e^{i\theta}} \pa{f(z) - \sum_{n=0}^{N-1} c_n z^n} (-z)^{-s-1} \d z,
\end{equation}
with $N\in\N$ and $\theta\in (\sigma, 2\pi-\sigma)$. The bound on $h_\phi$ is equivalent to
\[
|\phi(s)| \le R^{-\Re s} e^{\sigma\abs{\Im s} + o(|s|)} \quad\text{as}\quad s \to \tilde\infty e^{i[-\frac\pi2, \frac\pi2]}.
\]
Nevertheless, Bernstein's necessary and sufficient conditions have a slight mismatch between their assumptions and conclusions, so they do not yield a clean if-and-only-if statement. The reformulation in \Cref{t:leroylindconv} using radial compactification and polynomial type is equivalent to Bernstein's result, while allowing the additional generality $R\ne1$.

\medskip

A different kind of converse to the Le Roy--Lindelöf theorem was later found by Arakelian \cite[Thm.~1.1]{arakelian1985}.

\begin{theorem}[Arakelian, 1984]\label{t:arak84}
    Let $R > 0$ and $\sigma\in [0, \pi)$. The following are equivalent:
    \begin{alphabetize}
    \item There exists a function $\phi(s)$ analytic for $\Re s \ge 0$ and of exponential type at $\tilde\infty e^{i(-\frac\pi2,\frac\pi2)}$, satisfying \cref{e:interp} and $h_\phi(\theta) \le \sigma \abs{\sin \theta} - \log(R) \cos \theta$ for $\theta\in (-\frac\pi2, \frac\pi2)$.
    \item The series $f(z)$ of \cref{e:f} is analytically continuable to $\D_R \cup S_{(\sigma, 2\pi - \sigma)}$.
    \end{alphabetize}
\end{theorem}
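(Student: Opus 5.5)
For (a)$\Rightarrow$(b), I would adapt the Lindelöf--Ford contour formula \cref{e:fordlind}, but move the contour off the imaginary axis, because $\phi$ is only controlled in open subsectors of the right half-plane. I would split off $c_0$ and, for $\alpha\in(0,\frac\pi2)$, let $L_\alpha$ be the contour through $\frac12$ made of the rays $\frac12+re^{\pm i\alpha}$, $r\ge0$. This contour encircles $\{1,2,\dots\}$ inside the closed sector where the bound on $h_\phi$ holds. The representation is
\[
f(z)=c_0+\inv{2\pi i}\int_{L_\alpha}\frac{\phi(s)\,(-1)\,z^{s}}{e^{2\pi i s}-1}\,\d s ,
\]
with the orientation fixed so that the residues reproduce $\sum_{n\ge1}c_nz^n$, and with $\arg z\in(0,2\pi)$. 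On the ray $s=\frac12+re^{\pm i\alpha}$ three factors matter:
\begin{itemize}
\item the kernel decays like $e^{-2\pi r\sin\alpha}$ on the upper ray and like $1$ on the lower ray, after the usual branch bookkeeping;
\item $\phi$ grows at most like $e^{(\sigma\sin\alpha-\cos\alpha\log R+\epsilon)r}$;
\item $|z^s|$ equals $|z|^{r\cos\alpha}e^{\mp r\sin\alpha\arg z}$.
\end{itemize}
The integral therefore converges locally uniformly on
\[
U_\alpha:=\cbra{z:\ \cos\alpha\,\log\frac{|z|}{R}<\sin\alpha\,\min(\arg z-\sigma,\,2\pi-\sigma-\arg z)}.
\]
It defines an analytic function there, and this function agrees with the series where both make sense.

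Rotating $\alpha$ deforms the contours, and Cauchy's theorem applies in the region between two contours because the integrand decays exponentially there. So the functions obtained on the various $U_\alpha$ agree on overlaps. Letting $\alpha\to0$ covers $\D_R$, and letting $\alpha\to\frac\pi2$ covers every compact subset of $S_{(\sigma,2\pi-\sigma)}$. This gives (b). This direction is essentially \Cref{t:leroylind} with the contour tilted, and I expect no difficulty beyond the branch bookkeeping.

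For (b)$\Rightarrow$(a), the natural candidate is Bernstein's formula
\[
\phi(s)=\frac{\sin\pi s}{\pi}\int_0^{\infty e^{i\theta}}(\ldots)(-z)^{-s-1}\d z .
\]
Without a growth hypothesis on $f$, however, this integral need not converge. My plan is to truncate and correct. Fix $\theta\in(\sigma,2\pi-\sigma)$ and $r<R$. Integrate $f(z)(-z)^{-s-1}$ over a Hankel-type loop consisting of:
\begin{itemize}
\item the two edges of the ray $\arg z=\theta$ from $\rho e^{i\theta}$ inward;
\item the arc of $|z|=r$ around the origin inside $\D_R$.
\end{itemize}
Multiplying by $\sin(\pi s)/\pi$, the residue calculus shows that the loop integral equals $c_n$ at $s=n$ plus two endpoint terms at $\rho e^{i\theta}$. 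These endpoint terms are killed by the factor $\sin\pi s$ at integers. Hence the truncated expression is a genuine interpolant, analytic in $\Re s>-1$.

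Its growth splits into two parts. The arc contributes at most $r^{-\Re s}e^{\sigma'|\Im s|}$ with $\sigma'$ close to $\sigma$, after choosing the branch so that $\sin\pi s$ is compensated by $(-z)^{-s}$. The endpoints contribute roughly $\max_{|z|=\rho}|f|\cdot\rho^{-\Re s}e^{(\pi-|\theta-\pi|)|\Im s|}$.

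The main obstacle is the endpoint contribution. For fixed $\rho$ it is only as good as $\rho^{-\Re s}$, which is harmless deep inside the right half-plane but not near the imaginary axis. I would exploit exactly the freedom built into the statement: exponential type is required only at $\tilde\infty e^{i(-\frac\pi2,\frac\pi2)}$, with no control on the boundary directions. So I would let the truncation radius depend on the direction, choosing $\rho$ large enough that $\rho^{-\cos\psi}$ beats both $\max_{|z|=\rho}|f|$ and the $\sin$-factor in the direction $\arg s=\psi$.

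To keep $\phi$ analytic, a single $\rho(s)$ cannot be used. Instead I would write $\phi$ as a convergent sum over truncation radii $\rho_k\uparrow\infty$. The $k$-th correction is the difference of two truncated integrals, which is an integral of $f(z)(-z)^{-s-1}$ over the ray segment $[\rho_ke^{i\theta},\rho_{k+1}e^{i\theta}]$. Each correction vanishes at the integers. Its size is at most $M_{k+1}\rho_k^{-\Re s}e^{\pi|\Im s|}$, where $M_{k+1}$ is the maximum of $|f|$ on $|z|\le\rho_{k+1}$ along the ray.

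To make this work I would weight the corrections by entire factors of the form $e^{-\epsilon_k s}$, or, if needed, by an Arakelian-type approximation of $f$ on the ray. The goal is that the series converges in every closed subsector $|\arg s|\le\frac\pi2-\epsilon$ with the stated indicator bound, while possibly diverging in type on the imaginary axis itself. Balancing $M_k$ against $\rho_k$ to obtain this is the step I expect to be hardest.
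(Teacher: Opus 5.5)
\textbf{Verdict.} Your (a)$\Rightarrow$(b) is correct. Your (b)$\Rightarrow$(a) has a genuine gap: the step you defer, gluing the truncated interpolants into one function, is not carried out, and the weights you propose for it provably fail.

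\textbf{Direction (a)$\Rightarrow$(b).} This is the modification of \cref{e:fordlind} that the paper alludes to. Since $\phi$ is uncontrolled towards $\pm i\infty$, the contour has to be tilted into closed subsectors, and $U_\alpha$ is the right region. There are two slips. First, the kernel $1/(e^{2\pi is}-1)$ is bounded on the upper ray and decays like $e^{-2\pi r\sin\alpha}$ on the lower one, not the reverse; this correct assignment is what actually yields your $U_\alpha$ for $\arg z\in(0,2\pi)$. Second, drop the prefactor $\frac{1}{2\pi i}$, since the kernel already has residue $\frac{1}{2\pi i}$ at each integer.

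\textbf{The gap in (b)$\Rightarrow$(a).} Take the cut on the negative axis. A single truncated integral $\phi_\rho$ is then an entire interpolant whose truncation term has size $M(\rho)\rho^{-\Re s}e^{\pi|\Im s|}$, where $M(\rho)=\max|f|$ on the truncation arc. So $\phi_\rho$ obeys the indicator bound only for $|\tan\psi|<\log(\rho/R)/(\pi-\sigma)$. The family $(\phi_\rho)_\rho$ therefore yields no more than clause (a) of \Cref{t:gta}. Passing to a single function is the substance of the theorem, and it is where, according to the paper, Arakelian invokes an approximation theorem.

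Your weights fail because they have the wrong shape. For $\epsilon_k>0$ we have $|e^{-\epsilon_k s}|=e^{-\epsilon_k\Re s}\to0$ along every ray in $|\arg s|<\frac\pi2$. Far out, this switches off every correction $\delta_k=\phi_{\rho_{k+1}}-\phi_{\rho_k}$. That includes $\delta_0$, the only term that cancels the truncation term of $\phi_{\rho_0}$. So $\phi$ in general keeps the forbidden growth $\rho_0^{-\Re s}e^{\pi|\Im s|}$ outside the first sector. On $\Re s=0$ the weights are unimodular and cannot force convergence: for $f(z)=e^{-z}$, $|\delta_k(iy)|$ grows like $e^{\rho_{k+1}}/\rho_{k+1}$ for fixed $y\neq0$. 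No tuning of $\epsilon_k$ and $\rho_k$ cures either defect.

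\textbf{How the scheme can be repaired.} The telescoping sum does work with weights of the opposite shape: negligible on a large disc, and tending to $1$ faster than any exponential along each ray of the open half-plane. One choice is $w_k(s)=1-1/\Gamma(1+s/\lambda_k)$, which satisfies $|w_k|\le1+O(e^{\pi|s|/(2\lambda_k)})$ for $\Re s\ge0$.
\begin{enumerate}
\item Choose sectors $|\arg s|\le\eta_k$ with $\eta_k\uparrow\frac\pi2$ inside the good sectors. There $|\delta_k|$ is smaller than $R^{-\Re s}e^{\sigma|\Im s|}$ by a factor $O(e^{-\mu_k|s|})$ for some $\mu_k>0$.
\item Take $\lambda_k>\pi/(2\mu_k)$, then choose $r_k$ so that $|w_k\delta_k|\le2^{-k}R^{-\Re s}e^{\sigma|\Im s|}$ on that sector for $|s|\ge r_k$.
\item Enlarge $\lambda_k$ further so that $w_k\delta_k$ is negligible on $|s|\le r_k$.
\end{enumerate}
On $|\arg s|\le\eta_K$, the function $\phi=\phi_{\rho_0}+\sum_kw_k\delta_k$ then splits as $\phi_{\rho_K}$, plus finitely many terms $(w_k-1)\delta_k$ of indicator $-\infty$, plus a tail bounded by $2R^{-\Re s}e^{\sigma|\Im s|}$. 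This route appears to need no approximation theorem.

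\textbf{A separate error in your contour estimate.} Your bound for the small circle is wrong as stated. With the cut along $\arg z=\theta$, that piece is only $O(r^{-\Re s}e^{\theta\Im s})$ for $\Im s>0$ and $O(r^{-\Re s}e^{(2\pi-\theta)|\Im s|})$ for $\Im s<0$. To get $\sigma'$, first deform the loop with its endpoints fixed so that it runs along $\arg z=\pm\sigma'$ around $\{|z|\ge R,\ |\arg z|\le\sigma\}$, as in \Cref{a:proof}.
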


Compared with \Cref{t:leroylindconv}, Arakelian's theorem gives no control over the growth of $f$. The distinction already appears for $f(z)=z$, which satisfies clause (b) of \Cref{t:arak84} with $R=1$ and $\sigma=0$, but has polynomial type $1$ at infinity and therefore cannot satisfy clause (b) of \Cref{t:leroylindconv}. In return, the interpolant $\phi$ need only be analytic in a smaller half-plane and have exponential type away from the directions $\pm i\tilde\infty$. The absence of control in those directions accommodates this example, but also prevents the applicability of Carlson's theorem (\Cref{p:carlson}), so the interpolant is not unique. Constructing $f$ from $\phi$ requires only a simple modification of \cref{e:fordlind}. Recovering $\phi$ from $f$, however, is more technical, since it invokes an approximation theorem to compensate for the missing growth condition. Arakelian further showed that clause (a) of \Cref{t:arak84} can be strengthened by requiring $\phi$ to be entire and of order at most $1$ \cite[Thm.~1.2]{arakelian1985}.

\subsection{A general continuation theorem of Carlson}\label{s:carlson}

If we distinguish the different parallel directions that lead to a directional infinity of $\tilde\C$, we arrive at the \emph{pararadial compactification} of the complex plane,
\[
\bar\C := \C \cup \cbra{(\infty + iy) e^{i\theta} : y\in\bar\R, \enspace \theta \in\R}.
\]
Its topology and convexity, and the extensions of the support function and exponential map used below were introduced in \cite{beauduin2026b} and are adopted throughout the ensuing sections. 

The next result, proved by Carlson in his thesis \cite[§4, §7]{carlson1914}, characterizes continuation outside an unbounded singularity set. It does not appear elsewhere in the literature, although several special cases were independently discovered much later. This may be due both to the limited availability of the thesis and the impenetrability of Carlson's exposition.

\begin{theorem}[Carlson, 1914]\label{t:carlsonu}
    Let $c\in [-1, 0)$ and let $K$ be a closed convex subset of $\bar\C$ such that $\kappa_K(0)$ is finite and \cref{e:vert} holds. The following are equivalent:
    \begin{alphabetize}
    \item There exists an analytic function $\phi(s)$ of exponential type for $\Re s > c$ satisfying \cref{e:interp} and $h_\phi(\theta) \le \kappa_K(-\theta)$ for $\theta\in [-\frac\pi2, \frac\pi2]$.
    \item The series $f(z)$ of \cref{e:f} is analytically continuable to $\tilde\C\setminus \exp(-K)$ and is of polynomial type at most $c$ there.
    \end{alphabetize}
\end{theorem}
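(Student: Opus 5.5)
The plan is to prove the two implications separately, generalizing the contour-integral formulas already used for \Cref{t:leroylind} and \Cref{t:leroylindconv}.

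\emph{From (a) to (b).} We represent $f$ by the Ford--Lindelöf integral \cref{e:fordlind}, taking the line $\Re s=c'$ with $c'\in(c,0)$:
\[
f(z) = \int_{c'+i\infty}^{c'-i\infty} \frac{\phi(s) z^s}{e^{2\pi is}-1} \d s .
\]
For $|z|$ small the contour can be shifted to the right; collecting residues at $s=n$ recovers $\sum\phi(n)z^n$, so the integral agrees with $f$ near $0$. On the contour, $|e^{2\pi is}-1|^{-1}\approx e^{-2\pi|\Im s|}$ while $|\phi(s)|\le e^{\kappa_K(\mp\pi/2)|\Im s|+o(|s|)}$. Write $z=e^{-w}$, so that $|z^s|=e^{-\Re(ws)}$. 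Condition \cref{e:vert} leaves a positive slack in the total exponential rate, and the integral then converges locally uniformly for $w$ in a strip around the real axis.

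To reach all of $\C\setminus\exp(-K)$, we will rotate the contour. We use the bound $h_\phi(\theta)\le\kappa_K(-\theta)$ on $[-\frac\pi2,\frac\pi2]$, together with Phragmén--Lindelöf in the half-plane $\Re s>c$, to replace the vertical line by a broken line or a ray-adapted contour. Equivalently, we split $\phi$ into pieces decaying in suitable directions. In each direction $\theta$ this gives convergence precisely when $\Re(we^{i\theta})>\kappa_K(-\theta)$, up to conjugation conventions. Taking the union over admissible contours yields continuation to the complement of the Wulff shape of the indicator bound, which lies inside $-K$ after the exponential map.

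The polynomial-type statement comes from the contour at $\Re s=c'$: the integral is $O(|z|^{c'})$ as $z\to\tilde\infty e^{i\theta}$ outside $\exp(-K)$, and letting $c'\downarrow c$ gives type at most $c$. The unbounded points of $K$ in $\bar\C$ correspond to directional infinities of $\tilde\C$. This bookkeeping uses the extended support function and exponential map of \cite{beauduin2026b}.

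\emph{From (b) to (a).} We define $\phi$ by the Bernstein formula
\[
\phi(s)=\frac{\sin\pi s}{\pi}\int_0^{\infty e^{i\vartheta}}\Bigl(f(z)-\sum_{n<N}c_nz^n\Bigr)(-z)^{-s-1}\d z,
\]
or, more symmetrically, by a Hankel-type loop
\[
\phi(s)=\frac{1}{2\pi i}\oint f(z)\,z^{-s-1}\d z
\]
taken around $\exp(-K)$ and passing through $\tilde\C$. The polynomial type $\le c$ of $f$ makes the integral converge for $\Re s>c$, and residues at $z=0$ give $\phi(n)=c_n$. Choosing the path $z=e^{-w}$ with $w$ running along the boundary of a slightly enlarged convex set $K_\epsilon\supset K$ gives $|\phi(s)|\le e^{\kappa_{K_\epsilon}(-\theta)|s|}$ in direction $\theta$. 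Letting $\epsilon\to0$ yields the indicator bound, and this also gives exponential type for $\Re s>c$.

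\emph{Main obstacle.} The hardest step is controlling the contours at the unbounded parts of $K$ and near the imaginary directions $\pm i\infty$ of $s$. There one must check that the pararadial boundary points of $K$, where $\kappa_K$ may be infinite off $[-\frac\pi2,\frac\pi2]$, match exactly the directions in which $f$ has only polynomial growth. The first attempt would be to truncate $K$ to $K\cap\Pi_M$, apply the bounded case \Cref{t:carlsonb} and \Cref{t:dufr}-type estimates uniformly in $M$, and pass to the limit using the uniform polynomial bound.
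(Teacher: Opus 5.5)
Your converse (b)$\Rightarrow$(a) is essentially the paper's argument. The paper proves \Cref{t:carlsonu} through \Cref{t:carlsonlapl}. It defines $\phi(s)=\frac1{2\pi i}\oint_\Gamma f(e^{-u})e^{us}\,\mathrm{d}u$, with $\Gamma$ around $K$ and horizontal tails towards $\Re u=-\infty$; this is your Hankel loop written in $u=-\log z$. Exponential type for $\Re s>c$ and the indicator bound then follow by shrinking $\Gamma$ to $K$, as you say. One step you skip is that the loop is not closed, so $\phi(n)=c_n$ is not a bare residue computation. The paper integrates over $\partial(U\cup\D_R)$ and lets $R\to\infty$, and this is where polynomial type $c<0$ is needed, already for $n=0$.

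For (a)$\Rightarrow$(b) your route is genuinely different. The paper works on the Laplace side. It forms $\Phi_\gamma=\Phi-\int_0^\gamma\phi(s)e^{-ps}\,\mathrm{d}s=O(e^{-\gamma\Re p})$, inverts it on one contour around $K$, and sums, obtaining $f(z)=\frac1{2\pi i}\oint_{\Gamma+\log z}\Phi_\gamma(p-\log z)(1-e^p)^{-1}\,\mathrm{d}p$. This single formula gives both the continuation and the uniform bound $O(|z|^\gamma)$. Because it uses only the continuability of $\Phi$, it also covers non-convex $K$. The cost is the Laplace and contour machinery of \cite{beauduin2026b}. Your Lindelöf-kernel argument stays in the $s$-plane and needs only the indicator and Phragmén--Lindelöf, which is more classical. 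However, it is tied to convexity: each ray gives a half-plane condition, so you reach only the complement of a Wulff shape, and you must patch several representations together.

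Your sketch needs corrections at three points.

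\emph{The kernel is not symmetric.} $|e^{2\pi is}-1|^{-1}\to1$ as $\Im s\to+\infty$, and it is $\approx e^{-2\pi|\Im s|}$ only as $\Im s\to-\infty$. With $z=e^{-w}$, the vertical contour therefore converges for $\kappa_K(\frac\pi2)-2\pi<\Im w<-\kappa_K(-\frac\pi2)$. This is the strip between $K-2\pi i$ and $K$, not a strip around the real axis, where one typically finds $K$ itself.

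\emph{The two rays must be handled jointly.} A broken contour with upper ray at angle $\theta_1\in(0,\frac\pi2]$ and lower ray at angle $-\theta_2$ converges when $\Re(we^{i\theta_1})>\kappa_K(-\theta_1)$ and $\Re((w+2\pi i)e^{-i\theta_2})>\kappa_K(\theta_2)$ hold simultaneously. So convergence is not settled direction by direction, and the lower ray sees $w+2\pi i$. The union of these regions still covers the complement of $K+2\pi i\Z$:
\begin{itemize}
\item If $w\notin K$ lies in the horizontal strip spanned by $K$, then $\Re(we^{i\theta})>\kappa_K(-\theta)$ for some $\theta\neq0$ in $(-\frac\pi2,\frac\pi2)$, where $\kappa_K(-\theta)$ is finite and continuous.
\item For $\theta>0$, tilt the upper ray to $\theta$ and keep the lower ray vertical.
\item For $\theta<0$, apply the same construction to $w-2\pi i$: keep the upper ray vertical and tilt the lower ray to $\theta$.
\end{itemize}
Each convergence region is convex and contains points with $\Re w$ large, where the integral equals $\sum\phi(n)e^{-nw}$. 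Hence all pieces agree and glue to a $2\pi i$-periodic function of $w$, i.e.\ a single-valued continuation in $z$.

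\emph{The fallback does not work.} Truncating $K$ and invoking \Cref{t:carlsonb} would fail, because that theorem requires an entire interpolant and your $\phi$ is not entire. Fortunately the fallback is not needed.
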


Taking
\[
K = \cbra{p \in \bar\C : \Re p \le -\log R,\, \abs{\Im p} \le \sigma}
\]
gives $\exp(-K) = \cbra{z\in\tilde\C : |z|\ge R,\, \abs{\arg z} \le \sigma}$ and hence \Cref{t:leroylindconv} follows.

Hille's book contains an exposition of the direct implication in \Cref{t:carlsonu} \cite[Thm.~11.3.1]{hille1962}. In the proof, he also notes that $\C\setminus\exp(-K)$ extends to infinity in the limiting case
\begin{equation}\label{e:vert2}
    \kappa_K\pa{-\frac\pi2} + \kappa_K\pa{\frac\pi2} = 2\pi, \quad \kappa'_K\pa{-\frac\pi2}=-\infty \quad\text{and}\quad
    \kappa'_K\pa{\frac\pi2}=\infty.
\end{equation}
The support function always admits left and right derivatives $\kappa'_K(\theta^-) \le \kappa'_K(\theta^+)$ \cite{levin1980,beauduin2026b}. When $K$ is convex, one can show that $(\kappa_K(\theta) + i y) e^{i\theta} \in \partial K$ for every $y\in [\kappa'_K(\theta^-), \kappa'_K(\theta^+)]$. Nondifferentiability of the support function therefore corresponds to straight edges in $K$. Condition \cref{e:vert2} states that $K$ is contained in an open horizontal strip of width $2\pi$, has no horizontal boundary edges, approaches both boundary lines as $\Re p\to-\infty$ and reaches them at $-\infty \pm i \kappa_K(\pm \frac\pi2) \in \partial K$.

Under \cref{e:vert2}, clause (a) still implies that $f$ is analytically continuable to $\C\setminus\exp(-K)$. Carlson notes, however, that no corresponding conclusion about the growth of $f$ at infinity is possible in general.

The following parallel result extends \Cref{t:arak84}.

\begin{theorem}[Gawronski--Trautner, 1976; Arakelian, 1992]\label{t:gta}
    Let $K$ be a closed convex set such that $\kappa_K(0)$ is finite and \cref{e:vert} holds. The following are equivalent:
    \begin{alphabetize}
        \item For every $\eta\in(0,\pi/2)$, there exists an entire function $\phi_\eta(s)$ of exponential type satisfying \cref{e:interp} and $h_{\phi_\eta}(\theta) \le \kappa_K(-\theta)$ for $\theta\in [-\eta, \eta]$.
        \item There exists an entire function $\phi(s)$ of order at most $1$ satisfying \cref{e:interp} and $h_\phi(\theta) \le \kappa_K(-\theta)$ for $\theta\in (-\frac\pi2, \frac\pi2)$.
        \item The series $f(z)$ of \cref{e:f} is analytically continuable to $\C\setminus\exp(-K)$.
    \end{alphabetize}
\end{theorem}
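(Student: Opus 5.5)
The natural route is the cycle (b)$\Rightarrow$(a)$\Rightarrow$(c)$\Rightarrow$(b). The first implication is immediate: an entire function of order at most $1$ whose indicator is bounded by $\kappa_K(-\theta)$ on the open interval $(-\frac\pi2,\frac\pi2)$ is, by Phragmén--Lindelöf, of exponential type in every closed sector $|\arg s|\le\eta<\frac\pi2$. It is not of exponential type globally, however. So for (a) I would first multiply $\phi$ by a damping factor. Let $E_\eta$ be entire of exponential type with $E_\eta(n)=1$ for $n\in\N$, with negligible growth in $|\theta|\le\eta$, and with enough decay elsewhere. One candidate is a Mittag-Leffler-type function $E_\eta(s)=\exp(-\varepsilon (s\log s - \dots))$ adjusted to equal $1$ at integers. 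This construction needs care, so I would rather argue (b)$\Rightarrow$(a) through (c), as follows.

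\textbf{(a)$\Rightarrow$(c).} Fix $\eta$ and set $K_\eta := \W(h)^\dagger$, where $h=\kappa_K(-\cdot)$ on $[-\eta,\eta]$ and $h=\infty$ elsewhere. This is the convex set cut out only by the half-planes for directions in $[-\eta,\eta]$. Since $\phi_\eta$ is entire of exponential type, its Borel/Laplace transform $\Phi_\eta$ is analytic outside the conjugate indicator diagram, by Pólya's theorem. The conjugate indicator diagram lies in $K_\eta$ only after truncating by the true indicator in the other directions, so $\Phi_\eta$ continues at least to the exterior of a compact convex set $L_\eta$. This set agrees with the support constraints of $K$ in directions $[-\eta,\eta]$. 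I would then apply Dufresnoy--Pisot (\Cref{t:dufr}) on compact pieces. The width condition \cref{e:vert} fails for $L_\eta$ in general, so instead I would use the contour representation \cref{e:fleroy} directly. I deform $\Gamma$ to a contour that stays inside the strip $|\Im p|<\pi+\delta$, runs along the part of $\partial L_\eta$ controlled by $[-\eta,\eta]$, and is closed off far to the left. Periodicity of $1/(1-ze^p)$ in $\Im p$ lets the part of $L_\eta$ outside a $2\pi$-strip be handled by translating by $2\pi i$. This yields continuation of $f$ to $\C\setminus\exp(-K_\eta')$, where $K_\eta'$ decreases to $K$ as $\eta\to\frac\pi2$, using \cref{e:vert} and $\kappa_K(0)<\infty$. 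By uniqueness of continuation in the overlapping connected domains, these continuations glue to $\C\setminus\exp(-K)$.

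\textbf{(c)$\Rightarrow$(b).} This is the Arakelian direction, and I expect it to be the main obstacle. The plan is to define
\[
\phi(s)=\frac{1}{2\pi i}\oInt_{\Gamma} f(e^{-p})e^{ps}\d p,
\]
as in the Dufresnoy--Pisot necessity proof, where $\Gamma$ is the boundary of a neighborhood of $K$ inside the $2\pi$-strip. Since $K$ is unbounded to the left, $\Gamma$ is a hairpin opening toward $\Re p\to-\infty$. Residues at $p=-2\pi i k$... rather, the Cauchy formula gives $\phi(n)=c_n$. But $f$ has no growth control as $z\to\infty$, so the integral need not converge. To fix this, I would first approximate: by Arakelian's tangential approximation theorem, find an entire $g$ with $|f(e^{-p})-g(p)|$ tiny on $\Gamma$, decaying like $e^{-e^{|p|}}$. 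Then I would replace $f(e^{-p})$ by $f(e^{-p})-g(p)$, which leaves the residues at integers unaffected because $g$ has no singularities inside $\Gamma$. The resulting $\phi$ is entire. Its growth in $|\arg s|<\frac\pi2$ is governed by $\sup_{p\in\Gamma}\Re(pe^{i\theta}s)$, which gives $h_\phi(\theta)\le\kappa_K(-\theta)+\varepsilon$; letting the neighborhood shrink, with a diagonal argument and Phragmén--Lindelöf, removes the $\varepsilon$. Obtaining order at most $1$ near $\pm i\infty$ requires choosing the approximation so that $g$ is controlled along $\Gamma$ with a double-exponential decay rate. I would try Arakelian's theorem with an explicit error function $\epsilon(p)=\exp(-|p|e^{|p|})$ and then estimate $\phi$ by splitting $\Gamma$ at $|p|\sim\log|s|$. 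Finally, (b)$\Rightarrow$(a) follows because (b)$\Rightarrow$(c) by the same argument as (a)$\Rightarrow$(c) on each sector, and then (c)$\Rightarrow$(a) by the same integral truncated to give exponential-type $\phi_\eta$ after a damping by $\exp(-\varepsilon s^2)$-type corrections on sectors.
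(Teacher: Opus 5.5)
The paper does not prove this theorem. It credits (a)$\Leftrightarrow$(c) to Gawronski--Trautner and (b)$\Leftrightarrow$(c) to Arakelian's approximation argument, so your overall split is the expected one. The step you flag as the main obstacle, (c)$\Rightarrow$(b), fails as written.

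\textbf{Why the hairpin integral does not interpolate.} Enclosing $K$ is not enough. Cauchy's formula gives $c_n$ as the integral of $f(z)z^{-n-1}$ over $\partial U\cap\D_R$ \emph{plus} the arc $\{|z|=R\}\setminus U$. In the variable $p$, that arc is the segment of $\Re p=-\log R$ running outside the strip of $K$, from one tail to the $2\pi i$-translate of the other. The arc term tends to $0$ only under growth control of $f$, which is exactly what (c) does not provide; the appendix uses polynomial type $c<0$ at this very point.

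\textbf{Why subtracting $g(p)$ does not help.} An entire $g(p)$ with $f(e^{-p})-g(p)$ tiny on $\Gamma$ fixes convergence, not interpolation. The region enclosed by the hairpin is unbounded, so the absence of singularities does not make $\int_\Gamma g(p)e^{pn}\d p$ vanish. After truncation at $\Re p=-T$, this integral equals the integral over the segment joining the two tails \emph{across} $K$, where $g\approx f(e^{-p})$ is as large as $f$. Hence $\phi(n)-c_n$ is the limit of this segment term plus the closing-arc term for $f$, and nothing forces it to vanish. If instead $g=G(e^{-p})$ were $2\pi i$-periodic and close to $f$ on the closing arc too, the two terms would merge into one full period and give exactly the Taylor coefficient $G_n$.

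\textbf{What the repair needs.} Approximate $f$ by an entire $G(z)$ on the whole exterior region $\{|z|\ge r_0\}\setminus U$, not just on $\partial U$, with an error such as $O(e^{-|z|^\beta})$. Your hairpin integral applied to $f-G$ then converges and interpolates the coefficients of $f-G$, since the closing arcs now vanish. It is entire of order at most $1$. It satisfies $h\le\kappa_K(-\theta)$ on $(-\frac\pi2,\frac\pi2)$ because shrinking bounded parts of the hairpin towards $K$ leaves it unchanged. A diagonal argument over different interpolants, which need not coincide, would not give this bound. Separately, the coefficients $G_n$ of the entire function $G$ must be interpolated by an entire function of order at most $1$ with indicator $-\infty$ on $(-\frac\pi2,\frac\pi2)$. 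One option is a cardinal series damped by a genus-one canonical product with negative zeros, in the manner of $1/\Gamma(s+1)$. Your sketch has no counterpart to this step.

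\textbf{(c)$\Rightarrow$(a).} Drop the damping. A factor like $e^{-\varepsilon s^2}$ destroys interpolation, since $e^{-\varepsilon n^2}\ne1$, and exponential type, since it grows like $e^{\varepsilon|s|^2}$ on $i\R$. The truncated integral suffices if it is closed correctly: cut the hairpin at $\Re u=-\log R$ and join its ends by the segment corresponding to $\{|z|=R\}\setminus U$. This is an exact Cauchy formula, so it interpolates, and it is entire of exponential type. On the segment, $\Re(ue^{i\theta})\le-\log R\cos\eta+O(1)$ for $|\theta|\le\eta$. Shrinking the hairpin towards $K$ leaves the integral unchanged, so $h_{\phi_\eta}(\theta)\le\kappa_K(-\theta)$ on $[-\eta,\eta]$ once $R$ is large.

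\textbf{(b)$\Rightarrow$(c).} You cannot reuse the argument for (a). Here $\phi$ may have infinite type near $\pm i\infty$, and then it has no Borel transform with compact singular set. A Lindelöf-type integral as in \cref{e:fordlind} does the job, with the contour bent into the sectors $|\arg s|\le\eta$, where Phragmén--Lindelöf gives exponential type.

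\textbf{(a)$\Rightarrow$(c).} A contour confined to a strip of width about $2\pi$ cannot enclose $L_\eta$ in general. Since $\Phi_\eta$ is not periodic, nothing may be translated by $2\pi i$ either. Instead, \cref{e:fleroy} continues $f$ directly to the component of $0$ in $\C\setminus\exp(-L_\eta)$. This contains the component of $0$ in $\C\setminus\exp(-K_\eta)$. One then exhausts $\C\setminus\exp(-K)$ as $\eta\to\frac\pi2$, using \cref{e:vert} and continuity of $\kappa_K$ on $[-\frac\pi2,\frac\pi2]$. This exhaustion uses $K=K+(-\infty,0]$, which the statement tacitly requires. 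Indeed, for $K=\{0\}$ and $\phi(s)=1+2^{-s}$, (a) holds but $f$ has a pole at $2$.
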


Gawronski and Trautner proved the equivalence of (a) and (c) \cite[Thms.~1, 3 i)]{gawronski1976}. Arakelian later established the equivalence of (b) and (c), using approximation theorems \cite[Thm.~3.1]{arakelian1992}.

Under an additional growth hypothesis, Gawronski and Trautner obtained an interpolant with stronger properties \cite[Thm.~3 ii)]{gawronski1976}. This construction corresponds to Carlson's converse in \Cref{t:carlsonu}.

\subsection{Refinements involving the Laplace transform}\label{s:ulapl}

The following extension of \Cref{t:carlsonu} has two features. It incorporates the Dufresnoy--Pisot refinement discussed in \Cref{s:dufr} and allows $K$ to be a subset of $\bar\C$, possibly a purely infinite set, meaning that $K$ is disjoint from $\C$. This happens when $\kappa_K(0) = -\infty$, and in that case, $f$ is entire.

\begin{theorem}\label{t:carlsonlapl}
    Let $c\in [-1, 0)$, and let $K$ be a closed subset of $\bar\C$ with connected complement such that $\kappa_K(0)<\infty$, and \cref{e:vert} holds. The following are equivalent:
    \begin{alphabetize}
    \item There exists an analytic function $\phi(s)$ of exponential type for $\Re s > c$ satisfying \cref{e:interp} and such that $\Phi(p)$ is analytically continuable to $\bar\C\setminus K$.
    \item The series $f(z)$ of \cref{e:f} is analytically continuable to $\tilde\C\setminus \exp(-K)$ and is of polynomial type at most $c$ there.
    \end{alphabetize}
\end{theorem}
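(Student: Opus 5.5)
We prove the two implications separately, following the Dufresnoy--Pisot template of \Cref{t:dufr} but replacing the closed contour $\Gamma$ around a compact $K$ by an unbounded Hankel-type contour opening to the left. The hypothesis $\kappa_K(0)<\infty$ says that $K$ lies in a left half-plane $\Pi_{\kappa_K(0)}$. By \cref{e:vert}, the finite part of $K$ lies in a horizontal strip of width less than $2\pi$. Hence $K$ (in $\bar\C$) only reaches infinity through directional points $(\infty+iy)e^{i\theta}$ with $\theta$ near $\pi$, and $\exp(-K)$ is a closed subset of $\tilde\C$ on which $\exp(-\cdot)$ is injective. The domain of $\Phi$ is $\bar\C\setminus K$, so analyticity there includes regularity at the infinite points of $\bar\C$ not in $K$. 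Following \cite{beauduin2026b}, I read this as saying that $\Phi$ has a controlled limit (decay like $1/p$, as with a Laplace transform) along those directions.

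\emph{(a) $\Rightarrow$ (b).} Since $\phi$ has exponential type for $\Re s>c$, its Laplace transform
\[
\Phi(p)=\int_{\gamma}^{\infty}\phi(s)e^{-ps}\,\d s+\text{(correction)}
\]
is first defined for $\Re p$ large. To get a representation of $f$ itself, I would use the Lindelöf--Ford formula \cref{e:fordlind} with $c<\Re s$. I then insert the Pincherle representation \cref{e:pincherle} along a contour $\Gamma$ running from $\infty$ in one admissible direction around $K$ to $\infty$ in another. This is justified by a deformation argument using $\Phi$'s analyticity on $\bar\C\setminus K$ and its decay at the infinite points outside $K$. Exchanging the integrals and evaluating the $s$-integral by residues yields the analogue of \cref{e:fleroy},
\[
f(z)=\frac1{2\pi i}\int_\Gamma \frac{\Phi(p)}{1-ze^{p}}\,\d p,
\]
with the integral now over an unbounded $\Gamma$. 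The integrand is analytic in $z$ whenever $e^{-p}\neq z$ on $\Gamma$. Sweeping $\Gamma$ through $\bar\C\setminus K$, which is possible because the complement is connected, then continues $f$ to $\C\setminus\exp(-K)$. The polynomial type at most $c$ at the directional infinities outside $\exp(-K)$ comes from shifting the line $\Re s=c'$ in the Ford-type integral, exactly as in Lindelöf's expansion \cref{e:lindasymp}. For $c\in[-1,0)$ no residue terms $\phi(-n)z^{-n}$ appear, which leaves $O(|z|^{c+\epsilon})$.

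\emph{(b) $\Rightarrow$ (a).} I define $\phi$ by a Bernstein/Dufresnoy--Pisot type integral
\[
\phi(s)=\frac1{2\pi i}\int_{\Gamma} f(e^{-p})e^{ps}\,\d p,
\]
with $\Gamma$ an unbounded contour in $\bar\C\setminus K$ enclosing $K$ and both ends going to $\Re p\to+\infty$ within the strip. Equivalently, in the $z$-plane this is a loop starting and ending at $0$ and encircling $\exp(-K)$ through the directional infinities of $\tilde\C$ not in $\exp(-K)$. Convergence for $\Re s>c$ uses $|f(e^{-p})|=O(e^{-c'\Re p})$ near the infinite ends. This polynomial-type hypothesis is precisely why one gets a half-plane $\Re s>c$ rather than an entire function. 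Closing the contour to the right and applying Cauchy's formula with $f(e^{-p})=\sum c_n e^{-np}$ gives $\phi(n)=c_n$. The exponential-type bound follows from estimating the integral on $\Gamma$. Finally, I compute $\Phi$ by Fubini: $\int_0^\infty\phi(s)e^{-qs}\,\d s$ equals a Cauchy integral $\frac1{2\pi i}\int_\Gamma f(e^{-p})/(q-p)\,\d p$ up to a normalisation. This is manifestly analytic in $q$ off $\Gamma$. Moving $\Gamma$ arbitrarily close to $K$ gives the continuation of $\Phi$ to $\bar\C\setminus K$, together with regularity at the infinite points outside $K$.

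\emph{Main obstacle.} I expect the hardest part to be the contour bookkeeping at infinity in $\bar\C$. One has to check that unbounded contours can be deformed through $\bar\C\setminus K$: that connectedness of the complement in the pararadial topology suffices, and that the decay of $\Phi$ and the growth of $f$ at the boundary infinities $(\infty+iy)e^{i\theta}$ justify the deformations and Fubini. This matters especially near the strip edges $\Im p=\pm\kappa_K(\pm\pi/2)$. I would handle it by truncating the contours and bounding the connecting arcs using \cref{e:vert}. The purely infinite case $\kappa_K(0)=-\infty$ falls out of the same argument, with $f$ entire.
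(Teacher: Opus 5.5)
\textbf{Gap 1: the behaviour of $\Phi$ towards $\Re p=-\infty$.} The main problem is in (a)$\Rightarrow$(b), at the point you left vague. Decay like $1/p$ is essentially what $\Phi$ does there: the entire piece $\int_0^\gamma\phi(s)e^{-ps}\d s$ below behaves like $\phi(0)/p$. That is too weak for your argument.

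You insert Pincherle's formula into Ford's integral on the line $\Re s=c'\in(c,0)$. There $|e^{ps}|=e^{c'\Re p-\Im p\,\Im s}$ grows exponentially along the tails of $\Gamma$. So the inner integral diverges and the Fubini exchange is unjustified. The resulting formula $\frac1{2\pi i}\int_\Gamma\Phi(p)(1-ze^p)^{-1}\d p$ converges only conditionally, through cancellation between the two tails, and carries no decay in $z$.

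The missing idea is the paper's normalization: for $\gamma\in(c,0)$ put $\Phi_\gamma(p)=\Phi(p)-\int_0^\gamma\phi(s)e^{-ps}\d s$.
\begin{itemize}
\item The correction is entire and tends to $0$ as $\Re p\to-\infty$, so no singularity moves.
\item Moving the Laplace ray to the vertical rays from $\gamma$ to $\gamma\pm i\infty$ gives $|\Phi_\gamma(p)|\ll e^{-\gamma\Re p}$ in the half-planes $\Re(\pm ip)>h_\phi(\pm\frac\pi2)$. This is exponential decay to the left.
\item Pincherle's formula with $\Phi_\gamma$ then converges absolutely for $\Re s>\gamma$.
\item Summing it at $s=n$ against $z^n$ gives $f(z)=\frac1{2\pi i}\oint_{\Gamma+\log z}\Phi_\gamma(p-\log z)(1-e^p)^{-1}\d p$ directly, with no need for Ford's integral.
\end{itemize}

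\textbf{Gap 2: the growth bound only sees the convex hull.} Ford's line integral converges only for $\arg z\in(h_\phi(\frac\pi2),2\pi-h_\phi(-\frac\pi2))$. Shifting its line therefore yields $O(|z|^{c'})$ only in a sector governed by $h_\phi$, that is, by the convex hull of the singularities of $\Phi$, not by $K$.

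The omitted directions $\arg z\in[-h_\phi(-\frac\pi2),h_\phi(\frac\pi2)]$ contain admissible directional infinities whenever $K$ is compact, or has bounded pieces sticking out above or below its unbounded part. For example, take $K=\{\Re p\le0,\ \abs{\Im p}\le1\}\cup\{1+2i\}$ with a pole of $\Phi$ at $1+2i$. Then (b) requires the bound for all $\arg z\in(1,2\pi-1)$, but Ford's integral does not converge for $\arg z\in[2\pi-2,2\pi-1)$.

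So even with Gap 1 repaired, your route is only guaranteed to give the full conclusion when every height of $K$ is attained at $\Re p=-\infty$, as for unbounded convex $K$. The paper instead estimates the contour formula above:
\begin{itemize}
\item $|\Phi_\gamma(p-\log z)|\ll|z|^\gamma e^{-\gamma\Re p}$;
\item the translated contours stay uniformly away from $2\pi i\Z$ on closed sets of admissible directions;
\item $e^{-\gamma x}$ on $x\le-1$ and $e^{-(\gamma+1)x}$ on $x\ge1$ are integrable because $-1<\gamma<0$.
\end{itemize}

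\textbf{The converse direction.} In (b)$\Rightarrow$(a) the tails of $\Gamma$ must run to $\Re p\to-\infty$ (i.e.\ $|z|\to\infty$), not to $+\infty$. At $+\infty$, $f(e^{-p})\to c_0$ while $e^{ps}$ blows up for $\Re s>0$, so the integral diverges. Your own estimate $|f(e^{-p})|=O(e^{-c'\Re p})$ is only meaningful at $-\infty$, so this is presumably a sign slip.

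With that corrected, your construction is the paper's. One step is still missing: $\phi(n)=c_n$ does not follow merely from ``closing to the right''. You must also show that the contribution at infinity in the directions outside $\exp(-K)$ vanishes, which is where polynomial type $c<0\le n$ is used.
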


We prove the theorem in \Cref{a:proof}, thereby obtaining independent proofs of \Cref{t:carlsonu} with $K$ convex, and its special cases.

When $f$ is already entire, the result remains informative about its behavior at infinity. Such properties do not follow from the power-series representation alone. In this sense, the representation remains local even though it converges throughout $\C$. Ford obtained another result in this direction \cite[§12]{ford1936}, but the present theorem does not imply it. Related results were found by Mavrodi \cite{mavrodi1986,mavrodi1989a}.

Note that $\Phi$ in \Cref{t:carlsonlapl} cannot be defined by \cref{e:Phi} in this setting, as that series may diverge. Instead, a different definition using the integral form of the Laplace transform is required. This approach is outlined in \cite{beauduin2026b}, where we also gave a more specialized continuation theorem that provides more information about the concrete examples discussed there.

\begin{theorem}
Let $K$ be a closed subset of $\bar\C$ such that $\kappa_K(0) < \infty$ and \cref{e:vert} holds. The following are equivalent:
    \begin{alphabetize}
        \item There exists an entire function $\phi(s)$ of order at most $1$ satisfying \cref{e:interp} and such that $\Phi(p)$ is analytically continuable to $\bar\C\setminus K$.
        \item The series $f(z)$ of \cref{e:f} is analytically continuable to $\tilde\C\setminus\exp(-K)$ and there exists a sequence of coefficients $(a_n)_{n\in\N}\subset\C$ such that for all $\varrho > 1$,
        \begin{equation}\label{e:fasymp}
            f(z) - \sum_{n=1}^N \frac{a_n}{z^n} = O\pa{\frac{e^{N^\varrho}}{z^{N+1}}}, \quad z \to \tilde\infty \setminus \exp(-K),
        \end{equation}
        uniformly in $N\in\N$.
    \end{alphabetize}
    Furthermore, $a_n = -\phi(-n)$.
\end{theorem}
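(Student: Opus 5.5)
We plan to prove the two implications separately, in the spirit of Theorem~\ref{t:carlsonlapl}, and to replace the polynomial-type control there by the quantitative asymptotic expansion \eqref{e:fasymp}. For (a)$\Rightarrow$(b), we start from the Laplace--Pincherle representation \eqref{e:fleroy}. Since $K$ is now a subset of $\bar\C$, we take $\Gamma$ to be a contour in $\bar\C\setminus K$ that separates $K$ from the point $-\log z$. When $K$ is unbounded, $\Gamma$ is open to the left, with ends running off toward the boundary lines $\Im p=\pm\kappa_K(\pm\frac\pi2)$ as $\Re p\to-\infty$. Condition \eqref{e:vert} guarantees a gap of width less than $2\pi$, so $1/(1-ze^p)$ has no poles on $\Gamma$ for $z\notin\exp(-K)$. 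Deforming $\Gamma$ then continues $f$ to $\tilde\C\setminus\exp(-K)$.

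For the expansion, we write
\[
\frac{1}{1-ze^p}=-\sum_{n=1}^{N}z^{-n}e^{-np}+\frac{z^{-N}e^{-Np}}{1-ze^p},
\]
and integrate against $\Phi$. Pincherle's formula \eqref{e:pincherle} evaluated at $s=-n$ gives exactly $a_n=-\phi(-n)$; this identifies the coefficients and proves the final assertion. The remainder is
\[
z^{-N-1}\cdot\frac{1}{2\pi i}\oInt_\Gamma \Phi(p)e^{-(N+1)p}\frac{ze^{p}}{1-ze^p}\d p .
\]
As $z\to\tilde\infty$ away from $\exp(-K)$, the factor $ze^p/(1-ze^p)$ stays bounded on $\Gamma$. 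The integral is then essentially $\phi(-N-1)$, with $|\Phi|$ in place of $\Phi$. Because $\phi$ is entire of order at most $1$, we have $|\phi(-N)|\le e^{N^\varrho}$ for every $\varrho>1$, and this gives \eqref{e:fasymp} uniformly in $N$. The delicate point is the part of $\Gamma$ near $\Re p\to-\infty$, where $e^{-(N+1)p}$ grows. Convergence there has to come from the decay of $\Phi$ built into the integral definition of the Laplace transform in \cite{beauduin2026b}.

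For (b)$\Rightarrow$(a), we define $\phi$ by the Dufresnoy--Pisot contour integral
\[
\phi(s)=\frac{1}{2\pi i}\oInt_\Gamma f(e^{-p})e^{ps}\d p,
\]
with $\Gamma$ now a contour in $\bar\C$ around $K$. Cauchy's formula yields $\phi(n)=c_n$ once we shift to the part where the power series converges. When $\Gamma$ extends to $\Re p\to-\infty$, the integrand involves $f(z)$ as $z=e^{-p}\to\tilde\infty$. There we subtract the truncated expansion $\sum_{n\le N}a_nz^{-n}$ and choose $N$ depending on $|s|$. Each subtracted term $a_ne^{np}e^{ps}$ integrates to zero, or to an explicit entire contribution in $s$. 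The remainder is $O(e^{N^\varrho}e^{(N+1)\Re p})$, so it is integrable against $e^{ps}$ whenever $\Re s>-N-1$.

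Optimizing $N\approx|s|$ gives $|\phi(s)|\le e^{|s|^{\varrho}}$ for every $\varrho>1$, so $\phi$ is entire of order at most $1$. Its Laplace transform equals $f(e^{-p})$ minus the explicit contributions of the expansion terms. This continues $\Phi$ to $\bar\C\setminus K$, using the assumption that $f$ is continuable to $\tilde\C\setminus\exp(-K)$, pulled back through $\exp(-\,\cdot)$ on the strip. We also get $\phi(-n)=-a_n$ from a residue computation.

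We expect this converse to be the main obstacle: the choice of $N=N(s)$ has to give both entireness (uniformity in $N$ is essential here) and the order bound. A second delicate step is showing that the resulting $\Phi$ is genuinely analytic at the infinite points of $\bar\C\setminus K$ in the pararadial topology. For both, we would first try to reduce to Theorem~\ref{t:carlsonlapl} applied with arbitrarily negative $c$, absorbing the subtracted expansion terms.
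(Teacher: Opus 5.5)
The paper does not prove this statement itself; it defers to \cite{beauduin2026b}. The natural template is its appendix proof of \Cref{t:carlsonlapl}, and measured against that, your (a)$\Rightarrow$(b) has a genuine gap exactly at the step you call delicate.

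Along the horizontal tails of $\Gamma$, $\Phi$ does not decay exponentially. Rotating the Laplace integral onto the imaginary axis and integrating by parts gives $\Phi(p)=\phi(0)/p+O(|p|^{-2})$ there; for instance $\phi(s)=1/\Gamma(s+1)$, $f(z)=e^z$ gives $\Phi(p)\sim 1/p$. So $\oInt_\Gamma\Phi(p)e^{-np}\d p$ and your remainder integral diverge for $n\ge1$: neither Pincherle's formula at $s=-n$ nor the remainder makes sense with $\Phi$ itself. The fix is the appendix's regularization $\Phi_\gamma(p)=\Phi(p)-\int_0^\gamma\phi(s)e^{-ps}\d s$, whose correction is entire and so leaves the singularities unchanged, now with $\gamma=\gamma_N<-N-1$ depending on $N$. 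Laplace inversion then holds for $\Re s>\gamma_N$ and does give $a_n=-\phi(-n)$.

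The uniform estimate is still missing, however. Pincherle's formula only yields $|\phi(-N-1)|\le\frac1{2\pi}\oInt_\Gamma|\Phi_{\gamma_N}(p)|e^{-(N+1)\Re p}\abs{\d p}$, which is the opposite inequality, so the bound $|\phi(-N-1)|\ll e^{(N+1)^\varrho}$ from the order says nothing about the remainder. What you need is a pointwise bound $|\Phi_{\gamma_N}(p)|\le A_Ne^{-\gamma_N\Re p}$ on $\Gamma$ with $A_N\ll e^{N^\varrho}$. Through $\Phi_\gamma(p)=\int_\gamma^{\gamma\pm i\infty}\phi(s)e^{-ps}\d s$, this amounts to exponential type of $\phi$ on the vertical lines $\Re s=\gamma_N$, say $|\phi(\gamma_N\pm it)|\le A'_Ne^{(h_\phi(\pm\frac\pi2)+\epsilon)t}$ with $A'_N\ll e^{N^\varrho}$. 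Order at most $1$ only gives $e^{(|\gamma_N|+t)^\varrho}$, which is not integrable against $e^{-\delta t}$. In \Cref{t:carlsonlapl} this vertical control is a hypothesis (exponential type for $\Re s>c$). Here it must be derived from the continuability of $\Phi$ to $\bar\C\setminus K$, and your proposal does not supply that derivation.

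In (b)$\Rightarrow$(a) your construction of $\phi$ is sound and matches the appendix. The contour integral defines $\phi$ for $\Re s>-1$. Subtracting $\sum_{n\le N}a_nz^{-n}$, whose terms integrate to zero there, continues it to $\Re s>-N-1$. Choosing $N\approx|s|$ gives order at most $1$; uniformity in $N$ is needed only for this, not for entireness. A residue at $0$ gives $\phi(-n)=-a_n$.

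Your description of $\Phi$ is wrong, though. $f(e^{-p})$ is $2\pi i$-periodic and singular on every translate $K+2\pi i k$, and removing finitely many entire terms $a_ne^{np}$ does not change that. For $\phi(s)=e^{as}$, for example, $f(e^{-p})=1/(1-e^{a-p})$ while $\Phi(p)=1/(p-a)$. Pulling $f$ back through $\exp(-\,\cdot)$ therefore cannot continue $\Phi$ to $\bar\C\setminus K$. Instead, use Fubini to get $\Phi(p)=\frac1{2\pi i}\oInt_\Gamma\frac{f(e^{-u})}{p-u}\d u$ and deform $\Gamma$ toward $K$.

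Finally, \Cref{t:carlsonlapl} is stated only for $c\in[-1,0)$, and $f$ has polynomial type exactly $-1$ unless $a_1=0$, so it cannot be invoked with arbitrarily negative $c$. The workable reduction is to apply it with $c=-1$ to the genuine power series $z^Nf(z)-\sum_{n=1}^Na_nz^{N-n}$, obtaining an interpolant $\psi_N$, and set $\phi(s)=\psi_N(s+N)$.
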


Here, the Poincaré/Lindelöf asymptotic expansion \eqref{e:fasymp} is not merely a consequence but an integral part of the theorem.

\section{Analytic continuation to a bounded set}

The preceding continuation regions are unbounded, whether their singularity sets are bounded as in \Cref{s:bounded} or unbounded as in \Cref{s:u}. In 1946, Cowling considered $K = e^{i\alpha}\Pi_{0} \cap e^{i\beta}\Pi_{0}$ where $-\pi < \alpha < 0 < \beta < \pi$ and gave sufficient conditions for continuation of \eqref{e:f} to $\C\setminus\exp(-K)$, a bounded set when $\beta-\alpha<\pi$.\footnote{In Cowling's notation, $\beta-\alpha < \pi$ corresponds to cases A/A' and B \cite[Thms.~1--2]{cowling1946}. Case C, where $-\alpha = \beta = \frac\pi2$, reduces to \Cref{t:lind02}, while cases D/D', where $\beta - \alpha > \pi$, are covered by \Cref{t:carlsonb}.} Agmon rediscovered a special case \cite[Lem.~3']{agmon1952} and supplied a partial converse \cite[Lem.~3]{agmon1952}.\footnote{Agmon attributed these lemmas to Bernstein's theorem in \Cref{s:arc}, but that theorem does not imply them.}

The following theorem, due to Gawronski and Trautner \cite[Thms.~1--2]{gawronski1976}, substantially extends these results by characterizing continuation to a bounded set. See also \cite[Thm.~3.2]{arakelian1992}.

In this section, intervals of arguments are interpreted as oriented intervals on $\Theta = \R/2\pi\Z$: $(\alpha,\beta)$ denotes the counterclockwise arc from $\alpha$ to $\beta$, with the analogous convention for closed and half-open intervals.

To describe bounded continuation sets $\C\setminus\exp(-K)$ using the support function, we exclude the region towards infinity by imposing $\kappa_K(\pi) = \infty$, which implies
\begin{equation}\label{e:u}
    \kappa_K(\theta) = \infty \quad\text{for } \theta\in \pa{\frac\pi2, -\frac\pi2},
\end{equation}
and prevent passage through infinity by imposing
\begin{equation}\label{e:vert3}
    \kappa_K\pa{-\frac\pi2}+\kappa_K\pa{\frac\pi2}=2\pi,
    \quad\text{and}\quad
    -\kappa'_K\pa{-\frac\pi2},\, \kappa'_K\pa{\frac\pi2}<\infty.
\end{equation}

\begin{theorem}[Gawronski--Trautner, 1976]\label{t:gt}
    Let $K$ be a closed convex set satisfying $\kappa_K(0) < \infty$, $\kappa_K(\pi) = \infty$ and \cref{e:vert3}. The following are equivalent:
    \begin{alphabetize}
        \item There exists an entire function $\phi(s)$ of exponential type satisfying \cref{e:interp} and $h_\phi(\theta) \le \kappa_K(-\theta)$ for $\theta\in [-\frac\pi2, \frac\pi2]$.
        \item The series $f(z)$ of \cref{e:f} is analytically continuable to $\C\setminus\exp(-K)$.
    \end{alphabetize}
\end{theorem}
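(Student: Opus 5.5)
For the direct implication (a)$\Rightarrow$(b), I would start from Ford's representation \cref{e:fordlind}, written as $f(z)=\int_L \phi(s)z^s(e^{2\pi i s}-1)^{-1}\d s$. Because $\phi$ is entire, the contour $L$ can be deformed. I would replace the vertical line $\Re s=c$ by two rays from $c$, namely $s=c+re^{\pm i\theta}$ with $\theta\in(0,\frac\pi2)$.

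On the upper ray, write $z=e^{-p}$. The integrand is bounded by
\[
\exp\bigl(r\,[\kappa_K(-\theta)-\Re(pe^{i\theta})-2\pi\sin\theta]+o(r)\bigr),
\]
and the lower ray gives the analogous bound with $-\theta$ and $+2\pi\sin\theta$. Each pair of rays therefore defines a function analytic on the set of $p$ where both exponents are negative. This set is the complement of a convex set cut out by two half-planes, and by Cauchy's theorem the representations for different $\theta$ agree on overlaps. Taking the union over $\theta\in(0,\frac\pi2)$, and using the different branches of $\log z$ (that is, $p$ and $p+2\pi ik$), I would show that these regions, together with the disk of convergence of $\sum\phi(n)z^n$, cover $\C\setminus\exp(-K)$. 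This is a convex-duality computation with the support function: the half-planes $\Re(pe^{\pm i\theta})>\kappa_K(\mp\theta)\mp 2\pi\sin\theta$ are the supporting half-planes of $K$ and of $K\pm 2\pi i$ in the directions near $\pm\frac\pi2$. The hypotheses \cref{e:vert3}, namely width exactly $2\pi$ and finite one-sided derivatives at $\pm\frac\pi2$, are what make the limiting half-planes as $\theta\to\frac\pi2$ exhaust everything outside $K\cup(K+2\pi i\Z)$ to the right. The condition $\kappa_K(\pi)=\infty$ by \cref{e:u} only makes the exterior region bounded in the $z$-plane, and no growth of $f$ needs to be controlled.

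For the converse (b)$\Rightarrow$(a), I would pull back the Cauchy coefficient formula. Take a closed curve $\gamma$ around $0$ in $\C\setminus\exp(-K)$. Under $p=-\log z$ it lifts to an arc $L$ in $\C\setminus(K+2\pi i\Z)$ running from some $p_0$ to $p_0+2\pi i$, passing to the right of $K$. Define
\[
\phi(s):=\frac1{2\pi i}\int_L f(e^{-p})e^{ps}\d p .
\]
This is entire and of exponential type. For $s=n$ the integrand is $2\pi i$-periodic, so $\phi(n)$ equals the Cauchy integral for $c_n$, which gives \cref{e:interp}. Moreover, $h_\phi(\theta)\le\sup_{p\in L}\Re(pe^{i\theta})$, and up to the conjugation convention this is $\kappa_K(-\theta)+\epsilon$ for $\theta\in[-\frac\pi2,\frac\pi2]$ when $L$ hugs the right-hand boundary of $K$.

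The main obstacle is removing the $\epsilon$. Different arcs $L$ give different interpolants, since the endpoint contributions only cancel at integers. Moreover, the natural endpoints are the right ends of the horizontal edges of $K$, which lie on $\partial K$, where $f(e^{-p})$ may be singular. My first attempt would be to fix $L$ so that it approaches those corner points tangentially to the edges while staying in the complement. I would then estimate the contribution near the corners directly, using that the edges are horizontal so that $\Re(pe^{\pm i\pi/2})$ is constant there. Finally I would apply a Phragmén--Lindelöf argument in the sectors $|\arg s|\le\frac\pi2$ to upgrade the bound $\kappa_K(-\theta)+\epsilon$ to the exact indicator bound. If that fails, I would instead treat the approximate interpolants $\phi_\epsilon$ for a sequence $\epsilon\to0$. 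Their pairwise differences vanish on $\N$ and have vertical indicators summing to nearly $2\pi$, so I would try to control these differences by a Carlson-type estimate (\Cref{p:carlson}) and extract a normal-family limit.
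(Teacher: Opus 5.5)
The paper gives no proof of this theorem; it only cites Gawronski--Trautner. So I am judging your plan on its own terms. It has a repairable error in the direct implication and a genuine gap in the converse.

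\emph{Direct implication.} The ray bounds are assigned to the wrong rays. On the upper ray $s=c+re^{i\theta}$ one has $e^{2\pi is}\to0$, so $(e^{2\pi is}-1)^{-1}$ gives no decay there. The factor $e^{-2\pi r\sin\theta}$ occurs on the lower ray instead, and no growing factor appears anywhere. With $z^s=e^{-ps}$, the pair $\pm\theta$ therefore converges on the wedge $\{\Re(pe^{i\theta})>\kappa_K(-\theta)\}\cap\{\Re(pe^{-i\theta})>\kappa_K(\theta)-2\pi\sin\theta\}$. This wedge lies between $K-2\pi i$ and $K$; it is an intersection of two half-planes, not the complement of a convex set.

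More importantly, tying both rays to a single angle does not cover $\C\setminus\exp(-K)$. Take $K=\{u+iv: 0\le v\le2\pi,\ u\le v\}$, which satisfies all the hypotheses, and $p=1+\frac{i}{10}$, so that $e^{-p}\notin\exp(-K)$. The lower-ray condition forces $\tan\theta>10(2\pi-1)$. The upper-ray condition, tested at $0\in K$, forces $\tan\theta<10$. The translates $p+2\pi ik$ with $k\neq0$ already fail at the corners $0$ or $2\pi+2\pi i$.

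The fix is to choose the upper angle $\theta_1$ and the lower angle $-\theta_2$ independently; here $\theta_1=\frac\pi4$ with $\theta_2$ close to $\frac\pi2$ works. Each such region is convex and contains far-right points, so the representations agree on overlaps. Together with their $2\pi i\Z$-translates, these regions do cover $\C\setminus(K+2\pi i\Z)$. Near the line where $K$ and $K-2\pi i$ touch, this uses both angles close to $\frac\pi2$ together with the finite right endpoints of the horizontal edges guaranteed by \eqref{e:vert3}.

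\emph{Converse.} This is the real gap, and it is the substance of the theorem. An arc $L$ gives only $h_\phi\le\kappa_K(-\theta)+\epsilon$, with $\phi=\phi_\epsilon$ depending on $\epsilon$, and neither of your remedies removes $\epsilon$.

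Clause (b) gives no control of $f$ near $\partial(\C\setminus\exp(-K))$. Already for $K=\{\Re p\le0,\ 0\le\Im p\le2\pi\}$ and $f(z)=1/(1-z)$, the corner $p=0$ is a pole of $f(e^{-p})$. In general $f$ may have a natural boundary with arbitrary growth, so an integral along an arc ending at the corners need not exist. Phragmén--Lindelöf only transfers bounds from rays where you already have them. It cannot recover the directions in which $L$ protrudes beyond $K$, and $L$ must protrude in order to avoid $\partial K$.

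The normal-family route fails for the same reason: the bounds on $\phi_\epsilon$ involve $\max_{L_\epsilon}|f(e^{-p})|$, which may blow up as $\epsilon\to0$. Moreover, the differences $\phi_\epsilon-\phi_{\epsilon'}$ have the form $(1-e^{2\pi is})G(s)$, where $G$ is an integral of $f(e^{-u})e^{us}$ between the two starting points. Their vertical indicators therefore sum to at least $2\pi$, which is exactly the regime where \Cref{p:carlson} gives nothing.

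What is missing is a way to avoid open arcs altogether. The boundary of the bounded domain $\C\setminus\exp(-K)$ is $\exp(-B)$ for a compact arc $B\subset\partial K$ of vertical extent $2\pi$. Cut $B$ at an intermediate height into arcs $B_1$ and $B_2$, each of vertical extent less than $2\pi$. Solve the additive Cousin problem on $\hat\C$ minus the two junction points. This writes $f=g_1+g_2$ near $0$, with $g_j$ holomorphic on $\hat\C\setminus\exp(-B_j)$ and $g_j(\hat\infty)=0$.

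Now \Cref{t:carlsonb} applies to each $g_j$ with the compact convex set $\operatorname{conv}B_j\subseteq K$. There, closed contours can be shrunk without changing the interpolant, so you get exact interpolants $\phi_j$ with $h_{\phi_j}(\theta)\le\kappa_{\operatorname{conv}B_j}(-\theta)\le\kappa_K(-\theta)$. Then $\phi=\phi_1+\phi_2$ satisfies (a).
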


Here the continuation domain is bounded, whereas the possible singularity set $\exp(-K)$ is unbounded. In particular, \Cref{t:gt} supplies a converse to Cowling's cases A, A', and B \cite{cowling1946}.

\subsection{Continuation to an arc of the circle of convergence}\label{s:arc}

The theorems in this subsection concern continuation across an arc of the circle of convergence. The following result is due to Bernstein \cites[§7]{bernstein1930}[Chap.~V, Thm.~III]{bernstein1933}.

\begin{theorem}[Bernstein, 1930]\label{t:bernstein}
    Let $\varsigma\in (0, \pi]$. The following are equivalent:
    \begin{alphabetize}
        \item There exists a function $\phi(s)$ analytic and of exponential type in $S_{[-\eta, \eta]}$, for some $\eta > 0$, satisfying \cref{e:interp} and $h_\phi(\theta) \le (\varsigma - \epsilon) \abs{\sin \theta}$, for $\theta\in (-\eta, \eta)$ and some $\epsilon > 0$.
        \item The series $f(z)$ of \cref{e:f} is analytically continuable to $\D\cup\cbra*{e^{i\theta} : \theta\in [\varsigma, -\varsigma]}$.
    \end{alphabetize}
\end{theorem}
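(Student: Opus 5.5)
The plan is to prove the two implications separately, using the Lindelöf-type contour formula for (a)$\Rightarrow$(b) and a Bernstein-type inverse formula for (b)$\Rightarrow$(a). The model is the Le Roy--Lindelöf theorem (\Cref{t:leroylind}), adapted from a half-plane to a thin sector.

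\textbf{(a)$\Rightarrow$(b).} We may assume $\eta<\pi/2$ and shrink $\epsilon$. We cannot integrate along a vertical line, since $\phi$ is only known in $S_{[-\eta,\eta]}$. Instead, for $N\in\N$, we split
\[
f(z)=\sum_{n<N}c_nz^n+\frac{1}{2i}\int_{\Gamma_N}\frac{\phi(s)(-z)^s}{\sin(\pi s)}\,\d s .
\]
Here $\Gamma_N$ starts at $\tilde\infty e^{-i\eta}$, runs along the ray $\arg s=-\eta$ to a point near $N-\tfrac12$, and returns along $\arg s=\eta$. It encloses the integers $n\ge N$, and the residues give the tail $\sum_{n\ge N}\phi(n)z^n$.

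On the ray $s=re^{\pm i\eta}$ we have the following bounds:
\begin{itemize}
\item $|1/\sin\pi s|\approx 2e^{-\pi r\sin\eta}$;
\item $|\phi(s)|\le e^{(\varsigma-\epsilon)r\sin\eta+o(r)}$;
\item $|(-z)^s|=|z|^{r\cos\eta}e^{\mp r\sin\eta\arg(-z)}$.
\end{itemize}
Write $\arg(-z)=\psi\in(-\pi,\pi)$, so $\arg z=\psi+\pi$. The integrand then decays provided $r\cos\eta\log|z|+r\sin\eta(\varsigma-\epsilon-\pi+|\psi|)<0$. This gives local uniform convergence in the region $|z|<e^{\tan\eta\,(\pi-\varsigma+\epsilon-|\psi|)}$, which is a neighbourhood of the closed arc $|\arg z|\ge\varsigma$ on $|z|=1$ (there $|\psi|\le\pi-\varsigma$), together with $\D$.

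The remaining point is that this region also contains points of $\D$, where the representation agrees with the power series. A neighbourhood of the arc $\{e^{i\theta}:\theta\in[\varsigma,-\varsigma]\}$ union $\D$ is what (b) means, so continuity in $z$ and the identity theorem finish this direction. The branch cut of $(-z)^s$ along $\psi=\pm\pi$, i.e. the positive axis, lies away from the arc since $\varsigma>0$.

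\textbf{(b)$\Rightarrow$(a).} If $f$ continues to $\D\cup$ the closed arc, it continues to $\D_{R}\cup$ a sector-shaped neighbourhood. By compactness, there are $\delta>0$ and $R>1$ such that $f$ is analytic on
\[
\D\cup\{z:|z|<R,\ |\arg z|\ge\varsigma-\delta'\}
\]
for some $\delta'<\delta$, with $f$ bounded near $|z|=R$ there.

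We define $\phi$ by Bernstein's formula in the form of a Hankel/loop integral: $\phi(s)=\frac{1}{2\pi i}\oint f(z)z^{-s-1}\,\d z$ over a contour $C$. The contour consists of the arc $|z|=\rho<R$ with $|\arg z|\ge\varsigma-\delta'$, closed by the two segments $\arg z=\pm(\varsigma-\delta')$ going into the origin, with the branch of $z^{-s-1}$ cut along the positive axis.

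Step 1: show $\phi(n)=c_n$. At integers the branch is irrelevant and the contour can be deformed to a small circle. Precisely, the two segments cancel for integer $s$ once one closes through $\D$. This requires analyticity of $f$ in the small sector near $0$, which holds since $0\in\D$. A cleaner variant is to replace the segments by a path inside $\D$ from $\rho e^{-i(\varsigma-\delta')}$ to $\rho e^{i(\varsigma-\delta')}$ passing through points $|z|=r_0<1$. The segments are only needed to handle $\Re s>0$ large, where the part near the origin gives no growth trouble.

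Step 2: estimate $\phi(re^{i\theta})$. We have $|z^{-s-1}|=|z|^{-r\cos\theta-1}e^{r\sin\theta\arg z}$. On the outer arc this gives $\rho^{-r\cos\theta}e^{r|\sin\theta|\pi}$, which is too large.

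This step is the main obstacle: getting indicator $\le(\varsigma-\epsilon)|\sin\theta|$ rather than $\pi|\sin\theta|$. The fix I would try first is to choose the branch so that $\arg z$ ranges over $[\varsigma-\delta',2\pi-\varsigma+\delta']$ for $\theta>0$, and over its negative for $\theta<0$. The resulting two functions agree at integers but are different functions. Multiplying by $z^{-s}$-factor adjustments does not help. Instead, I would use the radius: taking $\rho$ close to $R>1$ makes $\rho^{-r\cos\theta}$ exponentially small. Within a small sector $|\theta|<\eta$ we have $\cos\theta\log\rho$ versus $|\sin\theta|(\pi-\varsigma)$, which requires $\tan\eta<\log\rho/(\pi-\varsigma+\epsilon)$. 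It only works to make the arc contribution negligible compared with the segment contribution, which is of size $e^{r|\sin\theta|(\varsigma-\delta')+o(r)}$ up to the bound $|z|^{-r\cos\theta}\le\rho'^{-r\cos\theta}$ on the part of the segments with $|z|\ge\rho'$.

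So the plan is to balance three pieces: the arc, the segments, and the inner closing path inside $\D$. The inner path has $|z|\le r_0<1$ and gives growth $r_0^{-r\cos\theta}$, which is not bounded. I expect the inner piece to force a modification. Either one subtracts a polynomial and uses the segments all the way to $0$, as in Bernstein's formula with $(-z)^{-s-1}$ and a $\sin\pi s$ factor, accepting analyticity only in the sector. Or one proves interpolation via a Carlson-type uniqueness check. Sorting out this inner contribution is where I expect the real work to lie.
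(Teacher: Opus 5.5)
The paper does not prove this theorem; it is quoted from Bernstein with Eremenko's correction. So I judge your proposal on its own terms.

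\textbf{(a)$\Rightarrow$(b).} This half is the standard Lindelöf argument and is sound in outline. To make it precise:
\begin{itemize}
\item Let the rays emanate from $N-\frac12$ at an angle $\eta'<\eta$. The indicator bound is assumed only on the open interval, and it holds uniformly only on closed subsectors.
\item Fix the orientation: as written, $\Gamma_N$ is traversed clockwise, which flips the sign.
\item Justify that the integral equals $\sum_{n\ge N}\phi(n)z^n$. Close the contour with arcs crossing the real axis at half-integers, for $z\in\D$ with $|\arg(-z)|<\pi-\varsigma+\epsilon$.
\end{itemize}

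\textbf{(b)$\Rightarrow$(a).} Here there is a genuine gap, which you leave open yourself. You never produce an interpolant with $h_\phi(0)\le 0$; you only get $h_\phi(0)\le-\log r_0>0$. Two of your concrete choices also fail.
\begin{itemize}
\item The loop with segments running into the origin diverges for $\Re s\ge 0$, because $|z^{-s-1}|=|z|^{-\Re s-1}$ is not integrable at $0$. This is the opposite of what you claim.
\item Bernstein's ray formula with $\sin(\pi s)$ is unavailable, since $f$ need not continue along any ray to infinity.
\item A branch cut along the positive axis crosses the inner part of the contour. There it produces factors up to $e^{2\pi|\Im s|}$ and prevents you from deforming that part.
\end{itemize}

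\textbf{Missing idea: the interpolant does not depend on the inner radius.} Take the closed contour $C_{r_0}$ made of three pieces: the arc $|z|=r_0<1$ with $|\arg z|\le\varsigma-\delta'$; the two segments $\arg z=\pm(\varsigma-\delta')$ with $r_0\le|z|\le\rho$; and the arc $|z|=\rho>1$ with $|\arg z|\ge\varsigma-\delta'$. Set $\phi(s)=\frac1{2\pi i}\oint_{C_{r_0}}f(z)z^{-s-1}\,dz$ using the \emph{principal} branch, whose cut meets $C_{r_0}$ only at $-\rho$. Then:
\begin{itemize}
\item $\phi$ is entire.
\item $\phi(n)=c_n$, since the jump disappears at integers and Cauchy's formula applies.
\item $\phi$ is the same for every $r_0\in(0,1)$. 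Changing $r_0$ only sweeps a region of $\D\cap\{|\arg z|<\varsigma-\delta'\}$, where the integrand is analytic.
\end{itemize}
For fixed $r_0$ and $|\theta|<\frac\pi2$,
\[
h_\phi(\theta)\le\max\bigl\{-\cos\theta\log\rho+\pi|\sin\theta|,\ -\cos\theta\log r_0+(\varsigma-\delta')|\sin\theta|\bigr\}.
\]
The constants depend on $r_0$, but they do not affect the indicator. Letting $r_0\to 1$ gives $h_\phi(\theta)\le(\varsigma-\delta')|\sin\theta|$ whenever $\tan|\theta|\le\log\rho/(\pi-\varsigma+\delta')$. This is (a) with $\epsilon=\delta'$, and $\phi$ is even entire of exponential type.

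This is the same device the paper uses at the end of the appendix. There the indicator bound in \Cref{t:carlsonu} comes from choosing $\Gamma$ arbitrarily close to $K$, which is legitimate because the interpolant does not depend on the contour.
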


Unlike the preceding results, the continuation set need not be open. The statement above incorporates a correction noted by Eremenko \cite{eremenko2008}: Bernstein's 1933 formulation uses an open arc where a closed one is required, and the version with the open arc is false.

The case $\varsigma=\pi$, namely continuation to $-1$, was known to Carlson \cite[§7]{carlson1914}, who noted that $\phi$ can be assumed entire. When Arakelian and Martirosyan rediscovered the theorem in 1987 \cite[Lem.~1]{arakelian1987}, they strengthened this conclusion by showing that $\phi$ can be chosen to have exponential type. See also \cite{arakelian1988,arakelian1991,eremenko2008}.

The following theorem was announced in \cite[Thm.~I.10]{arakelian1991} and proved by Arakelian in \cite{arakelian1992}; see also \cite{arakelian2007}.

\begin{theorem}[Arakelian, 1991]\label{t:arak91}
    Let $\sigma\in [0, \pi)$. The following are equivalent:
    \begin{alphabetize}
        \item There exists an entire function $\phi(s)$ of exponential type satisfying \cref{e:interp}, $h_\phi(0) \le 0$ and $\pm h'_\phi(0^{\pm}) \le \sigma$.
        \item The series $f(z)$ of \cref{e:f} is analytically continuable to $\D\cup\cbra*{e^{i\theta} : \theta\in (\sigma, 2\pi-\sigma)}$.
    \end{alphabetize}
\end{theorem}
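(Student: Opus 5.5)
The plan is to split the argument into the two implications. The sufficiency direction, from (a) to (b), goes through a Lindelöf-type integral representation. The necessity direction, from (b) to (a), goes through the bounded-continuation theorem of Gawronski and Trautner (\Cref{t:gt}) together with a limiting argument.

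\emph{From (a) to (b).} Since $\phi$ is entire of exponential type, $h_\phi$ is trigonometrically convex. Together with $h_\phi(0)\le 0$ and $\pm h'_\phi(0^\pm)\le\sigma$, this gives, for every $\epsilon>0$, some $\eta>0$ with
\[
h_\phi(\theta)\le(\sigma+\epsilon)\abs{\sin\theta}+\epsilon\abs{\theta}, \qquad \abs{\theta}\le\eta .
\]
Far from the positive real axis we have no usable control, because $h_\phi(\pm\frac\pi2)$ may be large. So I would not integrate along a vertical line as in \cref{e:fordlind}. Instead I would use the Ford--Lindelöf kernel on a contour hugging the positive axis,
\[
f(z)=\int_{\gamma}\frac{\phi(s)(-z)^s}{e^{2\pi is}-1}\,\d s ,
\]
taken with the branch of $(-z)^s$ making this agree with $\sum\phi(n)z^n$ via residues at $s=n$. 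Here $\gamma$ comes in from $\tilde\infty e^{-i\eta}$, crosses the real axis in $(-1,0)$, and leaves to $\tilde\infty e^{i\eta}$.

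On the ray $s=re^{\pm i\eta}$ the kernel decays like $e^{-2\pi r\sin\eta}$. Writing $z=\varrho e^{i\theta}$, one has $\abs{(-z)^s}=e^{r(\cos\eta\log\varrho \mp \sin\eta(\theta-\pi))}$. Hence the integrand decays exponentially provided $\log\varrho$ is small and $\abs{\theta-\pi}<\pi-\sigma-O(\epsilon)$. This is exactly a neighbourhood of each closed subarc of $\{e^{i\theta}:\theta\in(\sigma,2\pi-\sigma)\}$. For $\abs z<1$ the closed contour version reproduces the power series. Letting $\epsilon\to0$ then gives continuation to $\D\cup\{e^{i\theta}:\theta\in(\sigma,2\pi-\sigma)\}$.

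\emph{From (b) to (a).} For each $\delta>0$ and each $\sigma'\in(\sigma,\pi)$, I would build a closed convex $K=K_{\delta,\sigma'}$ satisfying $\kappa_K(0)=\delta$, $\kappa_K(\pi)=\infty$ and \cref{e:vert3}, with $\pm\kappa'_K(0^\pm)=\sigma'$. Concretely, $K$ is a wedge with vertex $\delta$ and edges of slopes $\pm\sigma'$, rounded off so that it stays inside a horizontal strip of width $2\pi$ with the correct edge behaviour. The point of these choices is that $\C\setminus\exp(-K)$ is a bounded domain containing $\D_{e^{-\delta}}$ and a thin neighbourhood of the arc $[\sigma'',2\pi-\sigma'']$ for some $\sigma''$ near $\sigma'$, and that it is contained in the continuation region of $f$. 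I would check this by tracing $\exp(-\partial K)$. Then \Cref{t:gt} yields entire interpolants $\phi_{\delta,\sigma'}$ of exponential type with $h\le\kappa_K(-\theta)$ on $[-\frac\pi2,\frac\pi2]$. In particular each has $h(0)\le\delta$ and $\pm h'(0^\pm)\le\sigma'$.

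The main obstacle is passing to a single interpolant with $\delta=0$ and $\sigma'=\sigma$. Carlson uniqueness (\Cref{p:carlson}) is unavailable, since the width condition is the borderline $2\pi$. I would first try to take differences $\phi_{k+1}-\phi_k$ along a sequence $\delta_k\to0$, $\sigma'_k\to\sigma$. Each difference vanishes on $\N$, so it has the form $\sin(\pi s)\,g_k(s)$ with $g_k$ entire of exponential type. I would then use a Mittag-Leffler/Arakelian-type approximation to subtract corrections $\sin(\pi s)\,\tilde g_k(s)$ that make the modified sequence converge locally uniformly. The corrections must be controlled only in a narrow sector around the positive axis, which is all that $h_\phi(0)$ and $h'_\phi(0^\pm)$ see. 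The limit $\phi$ interpolates $(c_n)$. Because the indicator bounds near $\theta=0$ pass to the limit with uniform constants, it satisfies $h_\phi(0)\le0$ and $\pm h'_\phi(0^\pm)\le\sigma$. Keeping $\phi$ of exponential type globally during this gluing is the delicate point. If it fails, I would instead construct $\phi$ directly via Bernstein's formula for $\phi$, rotating the ray of integration into $\theta$ just beyond $\sigma$, and correct the growth away from the real axis by approximation, as Arakelian does.
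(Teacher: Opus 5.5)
The paper gives no proof of this theorem. It cites Arakelian's proof, which uses approximation theorems, and remarks that the statement is equivalent to Bernstein's \Cref{t:bernstein}. Your argument therefore has to stand on its own.

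Your sufficiency argument essentially works: a Ford--Lindelöf integral over the rays $\arg s=\pm\eta$ is the right tool. The kernel needs fixing, though. With $(-z)^s/(e^{2\pi is}-1)$ the residues give $\sum\phi(n)(-z)^n$ whatever branch you choose, and $1/(e^{2\pi is}-1)$ does not decay on the upper ray. Use $\pi(-z)^s/\sin(\pi s)$ with $\abs{\arg(-z)}<\pi$, up to a constant. This kernel decays like $e^{-\pi r\sin\eta}$ on both rays. The worst exponent is then $r\bigl(\cos\eta\log\varrho+\sin\eta\,(\sigma+\epsilon-\pi+\abs{\theta-\pi})\bigr)$, which gives exactly your condition $\abs{\theta-\pi}<\pi-\sigma-\epsilon$. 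With the rate $e^{-2\pi r\sin\eta}$ you wrote, the same bookkeeping would continue $f$ across the whole circle minus $1$. For $\sigma>0$, $\phi(s)=e^{-i\sigma s}$ refutes that, since $f$ then has a pole at $e^{i\sigma}$.

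Your necessity argument has a genuine gap, in two places.
\begin{itemize}
\item The claim that each $\phi_{\delta,\sigma'}$ has $\pm h'(0^\pm)\le\sigma'$ does not follow from $h\le\kappa_K(-\cdot)$. That bound controls the one-sided derivatives at $0$ only when $h(0)=\kappa_K(0)$. If $h(0)<\delta$, the right edge of the indicator diagram lies further left, where $K$ is wider, and you only get the half-width of $K$ at that abscissa. (A wedge with vertex $\delta$ has $\kappa'_K(0^\pm)=0$; the condition $\pm\kappa'_K(0^\pm)=\sigma'$ means $K$ has the vertical edge $[\delta-i\sigma',\delta+i\sigma']$.)
\item The passage $\delta\to0$, $\sigma'\to\sigma$ is the whole difficulty, and you do not carry it out. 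You never show that the corrected sequence converges or stays of exponential type. The fallback ``as Arakelian does'' is the cited theorem itself.
\end{itemize}

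Neither limit is needed. Take $K=\{x+iy : x\le0,\ \abs{y}\le\min(\pi,\sigma+g(-x))\}$ with $g$ concave, $g(0)=0$ and $g>0$ on $(0,\infty)$.
\begin{itemize}
\item Then $\kappa_K(0)=0$, $\kappa_K(\theta)=\sigma\abs{\sin\theta}+o(\theta)$ near $0$, $\kappa_K(\pi)=\infty$, and \eqref{e:vert3} holds.
\item The domain $\C\setminus\exp(-K)$ is $\D$ together with the cap $\{\varrho e^{i\theta} : \varrho\ge1,\ \sigma+g(\log\varrho)<\theta<2\pi-\sigma-g(\log\varrho)\}$.
\item The boundary of $K$ may leave $\pm i\sigma$ tangentially to the edge, so $g$ can tend to $0$ as slowly as needed (take a concave majorant). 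The cap therefore fits inside any neighborhood of the open arc, however fast that neighborhood pinches at $e^{\pm i\sigma}$.
\end{itemize}
\Cref{t:gt} then gives a single $\phi$ with $h_\phi(0)\le0$ and, whenever $h_\phi(0)=0$, $\pm h'_\phi(0^\pm)\le\sigma$. If the radius of convergence is $1$, then $h_\phi(0)\ge\limsup_n n^{-1}\log\abs{c_n}=0$, and you are done.

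If the radius of convergence exceeds $1$, the derivative bound cannot be recovered, because the statement as printed fails. Take $f(z)=(1-e^{i}z/2)^{-1}+(1-e^{-i}z/2)^{-1}$, which satisfies (b) for every $\sigma$. Every interpolant has the form $\phi=\phi_0+\chi$ with $\phi_0(s)=e^{(i-\log2)s}+e^{(-i-\log2)s}$ and $\chi|_{\N}=0$, and $h_\phi(0)\ge-\log 2$.
\begin{itemize}
\item If $h_\phi(0)=-\log2$, then $2^s\phi(s)$ interpolates the coefficients of $f(2z)$, which has poles at $e^{\pm i}$. Your sufficiency argument then excludes $\pm h'_\phi(0^\pm)\le\sigma<1$.
\item If $h_\phi(0)>-\log2$, then $h_\chi\le h_\phi$ near $0$ with equality at $0$, so $h'_\chi(0^+)-h'_\chi(0^-)\le2\sigma<2\pi$. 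This contradicts the local form of Carlson's theorem behind \eqref{e:hvert2}: a nonzero entire function of exponential type vanishing on $\N$ has $h'(0^+)-h'(0^-)\ge2\pi$. Your contour proves this: it shows the Laplace transform of $\chi$ is analytic near the endpoints of that edge, which P\'olya's theorem forbids.
\end{itemize}
So you must assume radius of convergence $1$, the normalization used in the paper's overconvergence discussion. Alternatively, read (a) as $h_\phi(\theta)\le\sigma\abs{\sin\theta}+o(\theta)$ as $\theta\to0$.
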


By the correspondence with support functions discussed in \Cref{s:dufr}, indicator functions admit left and right derivatives at every point.

The indicator conditions in \Cref{t:bernstein,t:arak91} are closely related. When $h_\phi(0)=0$, the bound in \Cref{t:bernstein}, on a sufficiently small sector, is equivalent to $\pm h'_\phi(0^{\pm})<\varsigma$. When $h_\phi(0)<0$, that bound holds on a sufficiently small sector without any restriction on these derivatives. The strict derivative inequalities in the normalized case correspond to continuation across a closed arc, whereas the non-strict inequalities in \Cref{t:arak91} characterize continuation across an open arc. Consequently, the two theorems are equivalent, a fact Arakelian did not note when stating them separately.

\subsection{Relation to overconvergence}

The notion of \emph{overconvergence} \cite[§90]{dienes1957} is particularly relevant here. A power series is overconvergent if a subsequence of its partial sums converges uniformly on compact subsets of a domain larger than its disk of convergence, namely in a neighborhood of an arc of the boundary.

Assume that the power series $f$ of \cref{e:f} has a finite, nonzero radius of convergence $R$. By the Cauchy--Hadamard theorem,
\begin{equation}
    \inv R = \limsup_{n\to\infty} |c_n|^{1/n}.
\end{equation}
Consider the following five assertions:
\begin{itemize}
    \item[(OC)] The series $f$ is overconvergent, that is, a subsequence of its partial sums converges uniformly on compact subsets of a domain larger than the disk of convergence.
    \item[(AC)] $f$ is analytically continuable to an arc of its circle of convergence.
    \item[(H--O)] The power series $f$ has \emph{Hadamard--Ostrowski gaps}, that is, there exist two increasing sequences $(p_k)_k$ and $(q_k)_k$ of positive integers such that $p_k<q_k<p_{k+1}\to\infty$, and
    \[
    \limsup_{k\to\infty} \frac{p_k}{q_k} < 1 \qquad\text{and}\qquad \limsup_{\substack{n\to\infty\\n\in\bigcup_k(p_k,q_k)}} |c_n|^{1/n}< \inv{R}.
    \]
    \item[(I)] There exists an entire function $\phi(s)$ of exponential type satisfying \cref{e:interp}, $h_\phi(0)=-\log R$ and
    \begin{equation}\label{e:hvert2}
        h'_\phi(0^+) - h'_\phi(0^-) < 2\pi.
    \end{equation}
    \item[($\neg$CRG)] The function $\phi$ in (I) is \textbf{not} of \emph{completely regular growth} on the positive real ray in the sense of Levin and Pfluger \cite{levin1996}.
\end{itemize}

\Cref{e:hvert2} can be interpreted as a local version of \cref{e:hvert1}. In effect, viewing $h_\phi$ as the support function of a compact convex subset $K$ of $\C$, \cref{e:hvert2} says that the vertical right edge of $K$ has length less than $2\pi$. Then, as a corollary of \Cref{t:arak91},
\[
(\text{AC}) \iff (\text I).
\]
Moreover, the arc of continuation contains the points whose arguments lie in $(h'_\phi(0^+), h'_\phi(0^-))$.

Ostrowski's first \cite{ostrowski1921} and second \cite{ostrowski1923} overconvergence theorems together yield
\[
(\text{AC}) \wedge (\text{H--O}) \iff (\text{OC}),
\]
and the convergent subsequence is indexed by $(p_k)_k$; see also \cites[Chap.~XI]{dienes1957}{mushenheim1963}.

Mavrodi sought a more practical characterization of overconvergence by relating it to completely regular growth \cite{mavrodi1985,mavrodi1989,mavrodi1989a}.
\begin{theorem}[Mavrodi, 1985]\label{t:mavrodi}
    Under {\upshape (I)},
    \begin{equation}\label{e:mavrodi}
        (\neg\mathrm{CRG}) \iff (\mathrm{OC}).
    \end{equation}
\end{theorem}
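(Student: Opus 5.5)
The plan is to use the two results already available, namely the corollary $(\text{AC})\iff(\text I)$ of \Cref{t:arak91} and Ostrowski's equivalence $(\text{AC})\wedge(\text{H--O})\iff(\text{OC})$. Under (I), $f$ has analytic continuation across an arc, so (AC) holds automatically. \Cref{e:mavrodi} therefore reduces to the equivalence
\[
(\text{H--O}) \iff (\neg\text{CRG})
\]
for the interpolant $\phi$ of (I). This is a statement purely about the entire function $\phi$ on the positive ray. Throughout, write $h_0:=h_\phi(0)=-\log R$. Since $c_n=\phi(n)$, (H--O) says the following: there are integer intervals $(p_k,q_k)$ with $q_k/p_k\ge 1+\delta$ on which $\log|\phi(n)|\le (h_0-\delta)n$. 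On the other hand, completely regular growth on the ray means that $\log|\phi(r)|/r\to h_0$ as $r\to\infty$ outside a set of zero relative linear density.

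The first step is a localization that isolates the part of $\phi$ responsible for growth in the direction $0$. I would view $h_\phi$ as the support function of $K=\W(h_\phi)^\dagger$ and use Pincherle's formula \eqref{e:pincherle}. Let $K_1$ be a small compact neighbourhood, inside the convex hull, of the right vertical edge of $K$, and let $K_2$ be the rest, which lies in $\{\Re p\le h_0-\eta\}$ for some $\eta>0$. Splitting the contour accordingly gives $\phi=\phi_1+\phi_2$. Here $\phi_2(r)=O(e^{(h_0-\eta)r})$ on the positive ray, so $\phi_2$ is negligible for both (H--O) and CRG once $\delta<\eta$. The piece $\phi_1$ has conjugate diagram contained in a set of vertical width $<2\pi$ by \eqref{e:hvert2}. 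Hence the function $e^{-h_0 s}\phi_1(s)$ has type $<\pi$ in the vertical directions near the ray.

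The second and main step is to transfer smallness between integer points and the real ray for $g:=e^{-h_0 s}\phi_1(s)$. I expect this to be the principal obstacle. The tool I would use is a quantitative, local Carlson/Cartwright-type lemma: for a function of exponential type whose indicator is $\le 0$ on the ray and whose vertical type is $<\pi$, values at integers control values on the real line. For ($\neg$CRG)$\Rightarrow$(H--O) one needs sup bounds on intervals in terms of integer samples, which is the Cartwright/Plancherel--Pólya inequality applied on windows $[r,(1+\delta)r]$ after multiplying by a suitable damping factor. For (H--O)$\Rightarrow$($\neg$CRG), smallness of $g(n)$ for all integers in $(p_k,q_k)$ should force $\log|g(r)|\le -\delta' r$ on a subinterval $(p_k(1+\epsilon),q_k(1-\epsilon))$. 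I would try to prove this with the Lagrange-type interpolation series
\[
g(s)=\sum_n g(n)\frac{\sin\pi(s-n)}{\pi(s-n)}\cdot(\text{damping}),
\]
splitting the sum into $n$ inside and outside the gap. The inside contribution is small by hypothesis. The outside contribution is controlled by the distance to the window together with the vertical type being $<\pi$, which is exactly where \eqref{e:hvert2} enters.

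The third step assembles the two directions. If (H--O) holds, the subintervals above have lengths proportional to $p_k$. They therefore form a set of positive upper relative density on which $\log|\phi(r)|/r\le h_0-\delta'$, so $\phi$ is not of completely regular growth. Conversely, if $\phi$ is not CRG on the ray, I would invoke the Levin--Pfluger characterization: there is a set of positive upper relative measure where $\log|\phi(r)|\le (h_0-\delta)r$. Then subharmonicity of $\log|\phi|$, applied through a Cartan-type estimate, upgrades this set to whole intervals of the form $(r_k,(1+\delta)r_k)$ on which the same bound holds with a slightly smaller $\delta$. Sampling at integers via step two then yields the Hadamard--Ostrowski gaps. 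Finally, combining $(\text{H--O})\iff(\neg\text{CRG})$ with Ostrowski's theorems gives \eqref{e:mavrodi}.
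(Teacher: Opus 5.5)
Your reduction is the same as the paper's. Under (I), the corollary of \Cref{t:arak91} gives (AC), and Ostrowski's theorems reduce \cref{e:mavrodi} to (H--O)$\iff$($\neg$CRG) for the interpolant. The paper stops there: it says only that the proof relates Hadamard--Ostrowski gaps to failure of completely regular growth on the positive ray through a result of Azarin. You instead prove the core equivalence directly, in three moves: you isolate the right edge of the conjugate diagram via Pincherle's formula, you transfer smallness from integers to real windows by a local Cartwright-type estimate, and you upgrade a set of positive relative measure to whole windows by a Cartan-type estimate. This is a genuinely different, more self-contained route. It also makes visible something the citation hides: \cref{e:hvert2} enters only in (H--O)$\Rightarrow$($\neg$CRG), through the vertical type $<\pi$ of the localized piece. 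The converse holds for any exponential-type interpolant with $h_\phi(0)=-\log R$. The paper's route is shorter only because this real-variable work is delegated to Azarin.

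Three points need care when you write it out.

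First, the decomposition $\phi=\phi_1+\phi_2$ should come from splitting $\Phi$ itself, as an additive Cousin problem for its singular set cut along $\Re p=h_0-\eta$. Splitting the contour does not work, since singularities of $\Phi$ may straddle the cut. You must also multiply by $e^{-i\beta s}$ to centre the edge before claiming that both vertical types are $<\pi$.

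Second, the full cardinal series is the wrong tool in the sampling step. It need not converge, because $g$ can grow like $e^{h_g(\pi)|x|}$ with $h_g(\pi)>0$ on the negative axis. Moreover, no fixed damping factor can make the far terms exponentially small, since no nonzero function of exponential type decays exponentially along all of $\R$. What works is a residue computation for $g(w)\psi_N(w-s_0)/((w-s_0)\sin\pi w)$ over the rectangle $|\Re w-s_0|\le\mu r$, with $\psi_N(w)=\bigl(\sin(\epsilon w/N)/(\epsilon w/N)\bigr)^N$ and $N\asymp r$. The horizontal sides vanish because the vertical type plus $\epsilon$ is $<\pi$. The vertical sides contribute $O\bigl((N/(\epsilon\mu r))^N e^{o(r)}\bigr)$, which is exponentially small for $N=\lambda r$ with $\lambda$ small.

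Third, Cartan's estimate is a lower bound, so it must be used contrapositively. A single point $t$ in a window $[r,(1+\delta')r]$ with $\log|\phi(t)|>(h_0-\delta'')t$ forces $\log|\phi(x)|\ge(h_0-\delta)x$ on all but a proportion $O(\eta)$ of the window, provided $\delta',\delta''\ll\delta/\log(1/\eta)$. So the level on the resulting intervals is much smaller than $\delta$, not slightly smaller. Once whole windows are small, (H--O) follows by restricting to integers, with no sampling lemma needed. The first sentence of your second step therefore assigns the sampling estimate to the wrong direction.
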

This result solves a problem posed by Mushenheim and Macintyre \cite[p.~225]{mushenheim1963}, namely to describe overconvergence in terms of the coefficient function. The proof relates Hadamard--Ostrowski gaps to failure of completely regular growth on the positive real ray through a result of Azarin \cite{azarin1966}. \Cref{t:mavrodi} also contains a consequence outlined by Mavrodi: if one function $\phi$ satisfying (I) fails to have completely regular growth on the positive real ray, then overconvergence follows from \Cref{t:mavrodi}; applying the converse implication to any other admissible interpolant shows that it too fails to have completely regular growth on that ray.

\section{Analytic continuation to a Riemann surface}

The preceding results concern subsets of the complex plane, the simplest Riemann surface. The first analogous results on the Riemann surface of the logarithm include the following theorem of Leau \cite[Chap.~II, §12--15]{leau1899}.

\begin{theorem}[Leau, 1899]\label{t:leausurf}
    If $\phi(s)$ is analytic at $\hat\infty$ and satisfies \cref{e:interp}, then the series $f(z)$ of \cref{e:f} is analytically continuable along every path avoiding $1$ and non-principal lifts of $\bar\D$.
\end{theorem}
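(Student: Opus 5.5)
The plan is to convert the hypothesis "$\phi$ analytic at $\hat\infty$" into a Laplace-type integral representation of the tail of $f$, and then continue that integral in the variable $w=\log z$ by deforming its contour.

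\emph{Step 1: a Laplace representation of $\phi$.} Since $\phi$ is analytic at $\hat\infty$, we can write
\[
\phi(s)=a_0+\sum_{k\ge1}a_k s^{-k}\quad\text{for } |s|>r,
\]
with $|a_k|\le C r_1^{k}$ for some $r_1>r$. Using $s^{-k}=\frac{1}{(k-1)!}\int_0^\infty t^{k-1}e^{-st}\d t$ for $\Re s>0$, we get
\[
\phi(s)=a_0+\int_0^\infty g(t)e^{-st}\d t,\qquad g(t):=\sum_{k\ge1}\frac{a_k t^{k-1}}{(k-1)!}.
\]
Here $g$ is entire of exponential type at most $r_1$, so the representation is valid for $\Re s>r_1$. (This is the Pólya--Laplace correspondence of \Cref{s:dufr} run backwards, with $g$ playing the role of $\phi$ and $\phi-a_0$ the role of $\Phi$.)

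\emph{Step 2: splitting off a polynomial.} Fix an integer $N>r_1$. For $|z|<1$,
\[
f(z)=\sum_{n<N}c_nz^n+\frac{a_0z^N}{1-z}+\int_0^{\infty}g(t)\,\frac{(ze^{-t})^N}{1-ze^{-t}}\d t .
\]
The interchange of sum and integral is justified because $|g(t)e^{-Nt}|$ decays exponentially. The first two terms are rational with only a pole at $1$. Everything therefore reduces to continuing the integral
\[
I(w):=\int_\Gamma g(t)\frac{e^{N(w-t)}}{1-e^{w-t}}\d t,\qquad z=e^w,
\]
on the Riemann surface of the logarithm. Initially $\Gamma=[0,\infty)$ and $\Re w<0$.

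\emph{Step 3: contour deformation.} The integrand is analytic in $t$ except at the points $t\in w+2\pi i\Z$. As long as $N\cos(\arg t)>r_1$ at infinity, $\Gamma$ may be replaced by any path from $0$ to $\tilde\infty e^{i\alpha}$ with $|\alpha|<\arccos(r_1/N)$. By taking $N$ large, $\alpha$ can be chosen anywhere in $(-\frac\pi2,\frac\pi2)$. Thus, as $w$ moves along a path, we push $\Gamma$ ahead of the moving poles $w+2\pi ik$. This gives analytic continuation of $I$ along the path unless a pole is pinched against the fixed endpoint $t=0$, i.e.\ unless $w\in2\pi i\Z$, which means $z=1$ on some sheet.

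\emph{Step 4: monodromy.} When $w$ winds around a point $2\pi i k$, a pole crosses $\Gamma$ and $I$ changes by a residue term $\pm2\pi i\,g(w-2\pi ik)$ (up to the factor $e^{2\pi i kN}=1$). This is an entire function of $\log z$. Consequently the continuation to a non-principal sheet equals the principal branch plus an integer combination of the functions $g(\log z-2\pi i k)$. These are analytic on the logarithmic surface away from $z=0$, and $z=0$ itself lies in every non-principal lift of $\bar\D$. Putting the pieces together, $f$ continues along every path avoiding $1$ and the non-principal lifts of $\bar\D$. On such lifts the added terms $g(\log z-\dots)$ with $|\log z|$ small need not match any power series, which is presumably why they are excised.

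\emph{Main obstacle.} The delicate step is Step 4: making rigorous that the contour can always be deformed consistently along an arbitrary path on the surface, with a fixed $N$ and endpoint behaviour uniform along the path, and bookkeeping the accumulated residues. I would first handle a single loop around $z=1$ explicitly, then induct on the homotopy class, i.e.\ on the winding sequence around the lifts of $1$. For local uniformity near infinity of $\Gamma$, I would choose $\Gamma$ as a fixed ray $e^{i\alpha}[T,\infty)$ joined to a compact polygonal piece that is adjusted locally.
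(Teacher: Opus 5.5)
The paper gives no proof of this theorem. It only cites Leau's memoir and remarks that Le Roy strengthened the result (\Cref{t:leroysurf}) and that Ostrowski reproved it, so there is no internal argument to compare with. Judged on its own, your proof is correct. It is the natural dual of the Laplace-transform method of \Cref{s:dufr} and \Cref{a:proof}. There, $f$ is recovered from the Laplace transform $\Phi$ of the interpolant. Here, $\phi-a_0$ is itself the Laplace transform of the entire function $g$ of exponential type, and continuation comes from deforming the direct Laplace contour.

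Your ``main obstacle'' is not really one. Fix one $N>r_1$ and take the tail to be a horizontal ray far to the right of the pole line $\Re t=\Re w$. Then the contour can be held fixed for $w$ in a small disc, the integral is analytic there, and compactness of the path does the rest. So Step 3 alone continues $f$ along every path in $\C\setminus\{0,1\}$, with no induction on homotopy classes and no residue bookkeeping. Since the power-series branch is regular at $0$, you in fact get Le Roy's stronger \Cref{t:leroysurf}, read as: continuation along every path in $\C\setminus\{0,1\}$, with the power-series branch regular at $0$.

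Step 4 needs two corrections.

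First, your explanation of why $\bar\D$ is excised is wrong. The added terms $g(\log z-2\pi ik)$ are analytic at every $z\neq0$, so the non-principal lifts of $\bar\D\setminus\{0,1\}$ carry no singularities.

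Second, and this is the genuinely missing step, you must check that an admissible path reaching $z=0$ (necessarily in the principal lift of $\bar\D$) carries the power-series branch $f_0$. Having winding number $0$ about $1$ is not enough. Take $\phi(s)=(s+1)^{-2}$, so $g(t)=te^{-t}$. Go around $1$, then around $0$ inside $\D$, then back around $1$, then back around $0$. This yields $f_0\pm2\pi i\bigl(g(\log z)-g(\log z-2\pi i)\bigr)=f_0\pm(2\pi i)^2/z$: the net winding about $1$ is zero, yet there is a pole at $0$.

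The excision of $\bar\D$ is what rules this out, and it gives a clean argument:
\begin{itemize}
\item Work on the Riemann surface of $\log(1-z)$, with coordinate $\omega=\log(1-z)$. The admissible region $\Omega$ is $\C$ minus a locally finite family of disjoint, closed, connected, unbounded sets (the non-principal lifts of $\bar\D\setminus\{1\}$). Hence $\Omega$ is simply connected.
\item Removing the principal lift of $0$ from $\Omega$ leaves a set that projects into $\C\setminus\{0,1\}$. Its fundamental group is cyclic, generated by a small loop around that point, and the power series has trivial monodromy along that loop.
\item Your continuation is homotopy invariant in $\C\setminus\{0,1\}$. It therefore defines a single-valued function on this punctured region that agrees with the power series near the principal lift of $0$.
\item That function extends across the puncture, which proves the theorem as stated.
\end{itemize}
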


Le Roy then strengthened Leau's result \cite[§22]{leroy1900}.\footnote{Leau's phrasing is ambiguous, so it is unclear whether he claimed \Cref{t:leroysurf}. His proof, however, establishes \Cref{t:leausurf} \cite{ostrowski1933,bieberbach1955}.}

\begin{theorem}[Le Roy, 1900]\label{t:leroysurf}
    If $\phi(s)$ is analytic at $\hat\infty$ and satisfies \cref{e:interp}, then the series $f(z)$ of \cref{e:f} is analytically continuable along every path avoiding $1$ and non-principal lifts of $0$.
\end{theorem}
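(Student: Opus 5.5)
The plan is to turn analyticity of $\phi$ at $\hat\infty$ into an integral representation of $f$ whose integrand is singular only at explicitly moving points, and then continue $f$ by deforming the contour.

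\emph{Step 1: an integral representation near the origin.} Write $\phi(s)=a_0+\sum_{k\ge1}a_k s^{-k}$ for $|s|>\rho$. Set
\[
g(t):=\sum_{k\ge1}\frac{a_k t^{k-1}}{(k-1)!}.
\]
This is an entire function with $|g(t)|\le Ce^{\rho'|t|}$ for every $\rho'>\rho$. Since $s^{-k}=\frac{1}{(k-1)!}\int_0^\infty t^{k-1}e^{-st}\,\d t$ for $\Re s>0$, we have $\phi(n)=a_0+\int_0^\infty g(t)e^{-nt}\,\d t$ for every integer $n>\rho$, by termwise integration and dominated convergence. The exponential growth of $g$ would spoil convergence for small $n$. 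So fix an integer $N>\rho$, keep the first terms as a polynomial, and sum the geometric series inside the integral for $|z|<1$:
\begin{equation*}
f(z)=\sum_{n=0}^{N}c_nz^n+a_0\frac{z^{N+1}}{1-z}+\int_{\gamma}g(t)\,\frac{(ze^{-t})^{N+1}}{1-ze^{-t}}\,\d t .
\end{equation*}
Here $\gamma$ is initially $[0,\infty)$. The factor $e^{-(N+1)t}$ beats $e^{\rho'|t|}$ along any path that eventually stays in a sector $|\arg t|\le\psi$ with $(N+1)\cos\psi>\rho'$. Enlarging $N$ therefore allows any $\psi<\pi/2$.

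\emph{Step 2: where the integral is analytic.} For fixed $z$, the integral is analytic as long as $\gamma$ avoids the poles $t\in\log z+2\pi i\Z$ of the integrand, counting all branches. By Cauchy's theorem, $\gamma$ may be replaced by any path from $0$ to $\infty$ inside such a sector that is homotopic to it in the complement of these poles. So I will continue along a path $\tau\mapsto z(\tau)$ in the $z$-plane, starting in $\D$ and avoiding $1$. Choosing a continuous branch of $\log z(\tau)$ makes each pole $t_k(\tau)=\log z(\tau)+2\pi i k$ move continuously, and I deform $\gamma$ continuously so that it never meets a moving pole. The result is a homotopy of contours, hence an analytic continuation of the right-hand side. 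The rational term $a_0z^{N+1}/(1-z)$ and the polynomial are single-valued apart from $z=1$.

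\emph{Step 3: the two obstructions.} A deformation can fail only in two ways.

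The first is that a pole reaches the fixed endpoint $t=0$. This means $\log z(\tau)+2\pi ik=0$, that is, $z$ is at $1$ on some sheet. Such a lift of $1$ is excluded by hypothesis.

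The second is that $\gamma$ gets trapped. This happens when $\gamma$ has been forced to wind around a pole $t_k$ and that pole escapes to $\Re t\to-\infty$, which means $z\to0$ on a sheet reached after winding. A pole sent to infinity away from the right-hand sector cannot drag the contour along if the contour was never wrapped around it. This is why approaching $0$ on the principal sheet is harmless: $f$ is analytic there. Approaching a non-principal lift of $0$ is exactly the excluded case.

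One also has to check that the deformed contour can always be kept inside the sector of convergence near $+\infty$. Poles entering that sector do so at finite positions, and the contour can detour around them while keeping its tail in $|\arg t|\le\psi$.

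\emph{Main obstacle.} The delicate part is the bookkeeping in Step 3. One must show rigorously that, along a path avoiding lifts of $1$ and non-principal lifts of $0$, the contour homotopy can be continued for all $\tau$, and that the winding data of $\gamma$ around the poles matches exactly the sheet of the logarithm on which $z$ sits. I would handle this by working in the universal cover, with the variable $w=\log z$ and poles at $w+2\pi i\Z$. Winding of the $z$-path around $0$ translates the pole lattice vertically. The contour then needs to encircle only the finitely many translates that crossed the positive real axis, and pinching occurs only at $t=0$ or at $\Re t=-\infty$ for an encircled pole. This identifies the singular set precisely as claimed in the statement.
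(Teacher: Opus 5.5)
The paper states this theorem without proof (it only cites Le Roy and Ostrowski), so I am judging your argument on its own. Steps 1 and 2 are sound. $g$ is the Borel transform of $\phi$ at $\hat\infty$, the representation of $f$ holds for $N>\rho$, and moving the contour does continue $f$ along paths in $\C\setminus\{0,1\}$.

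The gap is in Step 3 and in your \emph{main obstacle} paragraph. There you decide which approaches to $0$ are harmless by the sheet of the logarithm on which $z$ sits, working with $w=\log z$. That is the wrong covering. A pole crosses $[0,\infty)$ only when $z$ crosses $(1,\infty)$. So wrapping of the contour is created by going around $1$, and winding around $0$ inside $\D$ only translates poles that are already wrapped.

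For $\phi(s)=1/(s+1)$ we have $g(t)=e^{-t}$ and $f(z)=-\log(1-z)/z$. One loop around $1$ brings $z$ back into $\D$ on the principal sheet of $\log z$. Yet the contour is now hooked around $t_0$, and the continuation is $f_0\mp 2\pi i/z$, which has a pole at $0$.

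Using sheets of $\log(1-z)$, or of both logarithms together, does not fix this. Take $\phi(s)=1/(s-\tfrac12)$, so that $f(z)=-2+\sqrt z\log\frac{1+\sqrt z}{1-\sqrt z}$. The commutator loop based at $\tfrac12$ goes around $1$, around $0$, back around $1$, then back around $0$. It has winding number zero about both points, but returns with the germ $f_0\pm 4\pi i\sqrt z$, which branches at $0$.

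So your claim that approaching $0$ on the principal sheet is harmless is false as you have set it up. The statement is only true when lifts are taken in the universal cover of $\C\setminus\{0,1\}$. In that reading, the principal lift of $0$ is the one reached by paths homotopic there to paths in $\D\setminus\{0\}$.

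With that reading, the repair also removes the isotopy bookkeeping you were worried about. For fixed $z$ the integrand is single-valued and meromorphic in $t$. It has simple poles at $t_k=\log z+2\pi i k$, with residue $g(t_k)$ there, because $(ze^{-t_k})^{N+1}=1$ and $\frac{d}{dt}(1-ze^{-t})=1$ at $t_k$. Hence every admissible contour yields
\[
I_0(z)+2\pi i\sum_k m_k\, g(\log z+2\pi i k),
\]
with finitely many nonzero $m_k\in\Z$. Here $I_0$ is the integral over $[0,\infty)$, which is analytic on $\C\setminus[1,\infty)$, including at $z=0$.

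Thus $f$ extends to $\C\setminus[1,\infty)$, and crossing $(1,\infty)$ changes it by $\mp 2\pi i\, g(\operatorname{Log} z)$. Since $g\circ\log$ continues along every path in $\C\setminus\{0\}$, induction on the number of crossings of $(1,\infty)$ gives continuation along every path in $\C\setminus\{0,1\}$. At the principal lift of $0$ the germ is $f_0$ by the monodromy theorem. No matching between winding data and sheets of a logarithm is needed.
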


\begin{example}
    The function $\phi(s) = 1/(s+1)$ provides a simple example of \Cref{t:leroysurf}. It is associated with the multivalued function $f(z) = -\log(1-z)/z$. On non-principal sheets, the monodromy of the logarithm creates a simple pole at $0$, illustrating the singularities that can arise there.
\end{example}

Unaware of Le Roy, Faber conjectured \Cref{t:leroysurf} on the basis of Leau's results \cite{faber1903}. Ostrowski subsequently gave an alternative proof \cite{ostrowski1933}. More general theorems, however, had already been obtained by Lindelöf \cites[§15]{lindelof1902}{lindelof1903}[§63]{lindelof1905}.

\begin{theorem}[Lindelöf, 1902]\label{t:lindRS02}
    If $\phi(s)$ is analytic and of exponential type $0$ at $\tilde\infty e^{i \R}$ and satisfies \cref{e:interp}, then $f(z)$ defined by \cref{e:f} is analytically continuable along every path avoiding $1$ and non-principal lifts of $0$.
\end{theorem}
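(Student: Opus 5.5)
We need to prove Lindelöf's theorem (\Cref{t:lindRS02}): $\phi$ is analytic and of exponential type $0$ at $\tilde\infty e^{i\R}$, meaning analytic for $|s|>R_0$ with $\phi(s)=O(e^{\epsilon|s|})$ for every $\epsilon>0$, and $\phi(n)=c_n$. We must show that $f$ continues along every path avoiding $1$ and the non-principal lifts of $0$.

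The plan is to split $\phi$ into an entire part and a part analytic at $\hat\infty$, and to reduce each part to a result already stated. Fix $R>R_0$. By the Laurent decomposition on $\{|s|>R_0\}$, write $\phi=\phi_1+\phi_2$. Here $\phi_1$ is entire, given by the positive-power part. The other part is
\[
\phi_2(s)=\inv{2\pi i}\oInt_{|t|=R}\frac{\phi(t)}{t-s}\d t .
\]
It is analytic on $|s|>R$ (and at $\hat\infty$), with $\phi_2(\hat\infty)=0$. Then $f=f_1+f_2$, where $f_j(z)=\sum_n\phi_j(n)z^n$. The finitely many $n<R$ contribute only a polynomial, which may be absorbed into $f_2$ or discarded.

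\textbf{Step 1: the entire part.} Since $\phi_2=O(1)$ at infinity, $\phi_1=\phi-\phi_2$ has exponential type $0$ on $|s|>R$, hence is an entire function of exponential type $0$. By \Cref{t:wigert}, $f_1(z)=F(1/(1-z))$ with $F$ entire. So $f_1$ is single-valued on $\hat\C\setminus\{1\}$ and continues along every path avoiding $1$.

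\textbf{Step 2: the part analytic at infinity.} $\phi_2$ is analytic at $\hat\infty$, so \Cref{t:leroysurf} gives continuation of $f_2$ along every path avoiding $1$ and the non-principal lifts of $0$. Adding the two parts proves the theorem.

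\textbf{Main obstacle.} The key step is verifying that $\phi_1$ has exponential type $0$ uniformly in all directions, including near the circle $|s|=R$ where the decomposition is made. On $|s|\ge 2R$ the bound $\phi_1=\phi-\phi_2=O(e^{\epsilon|s|})$ is immediate, since $\phi_2$ is bounded there. On the compact disk $|s|\le 2R$ the entire function $\phi_1$ is simply bounded. Together these give exponential type $0$ for $\phi_1$.

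If instead one wants to avoid \Cref{t:leroysurf}, I would prove Step 2 directly. The formula for $\phi_2$ gives
\[
f_2(z)=\inv{2\pi i}\oInt_{|t|=R}\phi(t)\sum_{n>R}\frac{z^n}{t-n}\d t .
\]
The inner sum is a Lerch-type transcendent, $\sum_n z^n/(n-t)=\int_0^1 u^{-t-1}\frac{(zu)^{N}}{1-zu}\d u$ up to normalization. It continues along paths avoiding $1$, and its continuations acquire singularities only at lifts of $0$, from $\log z$ monodromy. Uniform control for $t$ on the compact circle lets us exchange continuation and integration. This is essentially Le Roy's argument, so citing \Cref{t:leroysurf} suffices.
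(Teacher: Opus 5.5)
Your argument depends on the Laurent splitting $\phi=\phi_1+\phi_2$, and that splitting presupposes that $\phi$ is single-valued on $\{|s|>R_0\}$. That is not the hypothesis. As the paper says right after the statement, $\tilde\infty e^{i\R}$ denotes the radial infinities of the Riemann surface of the logarithm, with $\R$ replacing $\Theta$. So $\phi$ is only assumed analytic on the logarithmic surface over a neighborhood of infinity, with $\phi(s)=O(e^{\epsilon|s|})$ in every direction $\arg s\in\R$, and it may be genuinely multivalued. Typical examples are $\phi(s)=\log s$, or $\phi(s)=s^{\alpha}$ with $\alpha\notin\Z$; in the latter case $f$ is, up to finitely many terms, the polylogarithm $\operatorname{Li}_{-\alpha}(z)=\sum_{n\ge1}n^{\alpha}z^n$. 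For such $\phi$ there is no Laurent expansion on an annulus and no splitting into an entire part plus a part analytic at $\hat\infty$. Hence neither \Cref{t:wigert} nor \Cref{t:leroysurf} can be invoked, and your alternative Step~2, a Cauchy integral over the closed circle $|t|=R$, fails for the same reason. What you have proved is the single-valued case. There, incidentally, the principal part is $\phi_2(s)=-\frac{1}{2\pi i}\oint_{|t|=R}\frac{\phi(t)}{t-s}\d t$, so your sign is off. Your argument in fact shows that this case is nothing more than Leau--Wigert plus Le Roy, which is exactly why the content of Lindelöf's theorem lies in the logarithmic-surface formulation.

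The paper gives no proof of its own; it only cites Lindelöf. The missing idea is Lindelöf's residue calculus, which needs no decomposition of $\phi$ (compare \cref{e:fordlind}). For $N$ large, $|z|<1$ and $|\arg(-z)|<\pi$,
\[
\sum_{n\ge N}\phi(n)z^n=\frac{1}{2i}\int_{L}\frac{\phi(s)(-z)^s}{\sin \pi s}\d s,
\]
where $L$ comes in from infinity along $\arg s=\theta_1$, follows the arc $|s|=N-\frac12$ clockwise through $N-\frac12$, and leaves along $\arg s=-\theta_2$, initially with $\theta_1=\theta_2=\frac\pi2$. Since $|\sin\pi s|^{-1}\sim2e^{-\pi|\Im s|}$ and $\phi$ has type $0$, a leg in direction $\theta$ converges as long as
\[
\cos\theta\,\log|z|-\sin\theta\,\arg(-z)-\pi|\sin\theta|<0 .
\]
Continuing $f$ along a path amounts to letting $\arg(-z)$ range over $\R$ while moving $\theta_1,\theta_2$ continuously so that both legs stay admissible. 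Once a leg must be rotated past $\arg s=\pm\pi$, it passes onto another sheet of $\phi$; this is where analyticity on the logarithmic surface is used. The residues collected at $s=ne^{\pm i\pi}$ then produce series $\sum_n\phi(ne^{\pm i\pi})z^{-n}$ of the same kind in $1/z$, which are handled by the same device. A proof of the stated theorem requires this contour rotation, not a reduction to \Cref{t:wigert,t:leroysurf}.
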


Here, $\tilde\infty e^{i \R}$ denotes the radial infinities on the Riemann surface of the logarithm, where $\R$ replaces the set of angles $\Theta$. Accordingly, for a function $h$ on $\R$, set $\W(h):=\bigcap_{\theta\in\R}e^{i\theta}\Pi_{h(\theta)}$ as a subset of the logarithmic coordinate plane. In these terms, Lindelöf obtained the following extension in terms of the indicator function \eqref{e:indic} \cite[§64]{lindelof1905}.

\begin{theorem}[Lindelöf, 1905]
    If $\phi(s)$ is analytic and of exponential type less than $\pi$ at $\tilde\infty e^{i\R}$ and satisfies \cref{e:interp}, then $f(z)$ defined by \cref{e:f} is analytically continuable along every path avoiding $\exp(-\W(h_\phi)^\dagger)$ and non-principal lifts of $0$.
\end{theorem}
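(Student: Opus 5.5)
Our plan is to work on the logarithmic Riemann surface through the variable $p = -\log z$, so that $f(e^{-p}) = \sum_{n\ge0}\phi(n)e^{-pn}$ for $\Re p > h_\phi(0)$. The natural tool is a Lindelöf/Ford-type integral representation that is valid on every sheet:
\[
f(e^{-p}) = \int_{c+i\infty}^{c-i\infty} \frac{\phi(s)\, e^{-ps}}{e^{2\pi i s}-1} \d s, \qquad c\in(-1,0),
\]
in the spirit of \cref{e:fordlind}. Here the contour is rotated according to the direction in which $p$ sits. Concretely, for a direction $\theta\in\R$ we replace the vertical line by a ray $\{c + t e^{i\theta'}\}$ in the $s$-plane, on which $\phi$ is analytic and of exponential type. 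The kernel $1/(e^{2\pi is}-1)$ decays like $e^{-2\pi|\Im s|}$ away from the real axis. Along a ray in direction $\theta'$, the integrand is bounded by $\exp\bigl(r(h_\phi(\theta') - \Re(p e^{i\theta'}) - 2\pi|\sin\theta'| ) + o(r)\bigr)$. Since $h_\phi(\theta')<\pi$ by hypothesis, the exponential type is less than $\pi$. This type restriction is what guarantees absolute convergence in a nonempty open set of $p$ for every choice of ray direction.

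The key steps, in order, are as follows.

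\begin{enumerate}
\item Replace the principal representation near $z=0$, i.e.\ $\Re p$ large, by a Hankel-type contour. This contour encloses the nonnegative integers and is deformed to consist of two rays leaving from $c$ in directions $\pm\theta'$. By the residue theorem, with residues of $1/(e^{2\pi is}-1)$ at $s=n$ equal to $1/(2\pi i)$, the integral equals $\sum \phi(n)e^{-pn}$.
\item For each admissible pair of ray directions, show that the integral defines a holomorphic function of $p$ on the set
\[
\{p : \Re(p e^{i\theta'}) > h_\phi(\theta') \mp 2\pi\sin\theta'\}.
\]
\item Rotate the rays continuously. Two representations with nearby directions agree on the overlap of their domains by Cauchy's theorem, since the arc at infinity contributes nothing there. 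The family therefore glues into a continuation of $f(e^{-p})$, as a function of $p$, on the union of these half-planes.
\item Identify the complement. Intersecting over all directions, the union of the half-planes is exactly $\C$ minus the translates $\W(h_\phi)^\dagger + 2\pi i k$ of the Wulff shape, after the conjugation coming from $e^{-ps}$. Pulled back via $z=e^{-p}$ on the logarithmic surface, this gives continuation along every path avoiding $\exp(-\W(h_\phi)^\dagger)$.
\item Treat the lifts of $0$ separately. The point $p\to\infty$ with $\Re p\to+\infty$ corresponds to $z=0$, which is regular on the principal sheet by the power series. On other sheets, $p$ is shifted by $2\pi i k$, and the residue bookkeeping produces terms like the $\log$-monodromy in the example after \Cref{t:leroysurf}. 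These give possible singularities only at non-principal lifts of $0$, which are excluded.
\end{enumerate}

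We expect the main obstacle to be step 3 combined with step 4: making the gluing precise when the rays rotate past the real axis in the $s$-plane. There the ray would meet the poles at the integers, on the positive side, or leave the region where the kernel decays. The sheet index $k$ then changes, and one picks up $-\sum\phi(n)e^{-pn}$-type residue sums or their analytic replacements. First we would try to handle this by restricting the ray directions to $(0,\pi)$ and $(-\pi,0)$ separately. Passage across $\theta'=0$ would be treated by noting that $f$ itself is given there by the convergent series. Passage across $\theta'=\pi$ would use the expansion \cref{e:fphi(-n)} analogue, namely $-\sum\phi(-n)e^{pn}$, which converges for $\Re p<-h_\phi(\pi)$. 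We would then verify that the resulting multivalued function acquires exactly the branching described, and no further singularities.
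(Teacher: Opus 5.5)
There is a genuine gap, and it sits exactly where the theorem has content.

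\textbf{The gluing does not reach the other sheets.} A two-ray contour whose enclosed region contains only the positive integers must have its upper ray in a direction $\alpha\in(0,\pi)$ and its lower ray in a direction $-\beta$ with $\beta\in(0,\pi)$. As soon as a ray passes a multiple of $\pi$, it sweeps over poles of the kernel and the integral no longer equals $f$. Gluing over the admissible directions (your step 3) therefore gives a single-valued function only on the simply connected region
\[
D_0=\C\setminus\bigl((K+i[0,\infty))\cup(K-2\pi i-i[0,\infty))\bigr),
\]
where $K=\W(h_\phi)^\dagger$ is formed from the values of $h_\phi$ on $(-\pi,\pi)$. In the $z$-plane, $D_0$ consists of the disk of convergence, one window through the annulus containing $\exp(-K)$, and the universal cover of the exterior. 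This is just the Le Roy--Lindelöf picture.

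The claim in step 4, that the union is $\C\setminus\bigcup_k(K+2\pi ik)$, is false and cannot be repaired, because $f(e^{-p})$ is genuinely multivalued around each translate. For $\phi(s)=1/(s+1)$ one has $f(e^{-p})=-e^{p}\log(1-e^{-p})$, which has a logarithmic branch point at every $2\pi ik$.

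What the theorem actually asserts concerns paths that cross the two excluded sets. Such paths wind around $\exp(-K)$ and come back near $0$ on another sheet, where the power series is no longer available. Your last paragraph names this difficulty but does not resolve it. Rotating the upper ray past $\pi$ on the logarithmic surface changes the integral by the residue series $\sum_n\phi(ne^{i\pi})e^{pn}$. That series is only known to converge for $\Re p<-h_\phi(\pi)$. To continue $f$ further, you must continue this series, which is a new instance of the same problem: the variable is $e^{p}=1/z$ and the interpolant is $s\mapsto\phi(e^{i\pi}s)$.

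Closing the argument therefore needs an induction over rotations by multiples of $\pi$, or an equivalent monodromy computation. This is precisely where you need analyticity and type $<\pi$ on all of $\tilde\infty e^{i\R}$, not only on $(-\pi,\pi)$. Nothing in steps 1--4 uses that hypothesis. Step 5 asserts, rather than proves, that the only new singularities are non-principal lifts of $0$.

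\textbf{Local errors that distort the geometry.}
\begin{itemize}
\item \emph{Kernel asymptotics.} The kernel $1/(e^{2\pi is}-1)$ tends to $-1$ as $\Im s\to+\infty$. It decays like $e^{-2\pi\abs{\Im s}}$ only as $\Im s\to-\infty$. So your bound with $-2\pi\abs{\sin\theta'}$ is false on the upper ray. With your bound, for $\phi\equiv1$ (so $f=1/(1-z)$ and $h_\phi\equiv0$), step 2 would place $p=0$, the pole $z=1$, inside the domain of convergence. The asymmetry of the kernel is exactly what creates the offset $2\pi i$ between the upper and lower excluded sets.
\item \emph{Role of type $<\pi$.} A single ray converges on a nonempty half-plane for any finite indicator. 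What type $<\pi$ actually buys is $h_\phi(\frac\pi2)+h_\phi(-\frac\pi2)<2\pi$, so that the two excluded sets leave an open channel between them.
\item \emph{Domain of analyticity.} The hypothesis gives analyticity of $\phi$ only near infinity on the logarithmic surface. So neither the line through $c\in(-1,0)$ nor rays from a finite base point turning past $\pm\pi$ are available. You should split off finitely many terms of the series and use an arc of $\abs{s}=N+\frac12$ followed by rays. This also makes rotation through multiples of $\pi$ well defined on the logarithmic surface.
\end{itemize}
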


The final result of this section is due to Arakelian and Schmieder \cite{arakelian2001}. Its proof again uses approximation, but Carlson's theorem now gives genuine uniqueness of $\phi$.

\begin{theorem}[Arakelian--Schmieder, 2001]\label{t:a-s}
    The following are equivalent:
    \begin{alphabetize}
        \item There exists a function $\phi(s)$ analytic and of exponential type $0$ at $\tilde\infty e^{i(-\pi, \pi)}$ that satisfies \cref{e:interp}.
        \item The series $f(z)$ of \cref{e:f} is analytically continuable along every path avoiding $1$ and non-principal lifts of $\bar\D$, and is of polynomial type at $\tilde\infty e^{i(-\frac\pi2,\frac\pi2)}$.
    \end{alphabetize}
\end{theorem}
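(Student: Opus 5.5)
The plan is to work throughout in the logarithmic coordinate $z = e^{-p}$, so that paths on the Riemann surface of the logarithm become paths in the $p$-plane, and the excluded set becomes the lifts of $\bar\D$ and of $1$, i.e.\ the half-plane $\Re p \ge 0$ shifted by $2\pi i k$ with $k\ne 0$, together with the points $2\pi i k$. I would prove the two implications separately and use Carlson's theorem (\Cref{p:carlson}) only at the end, to note that $\phi$ in (a) is unique. Exponential type $0$ at $\tilde\infty e^{i(-\pi,\pi)}$ in particular gives $h_\phi(\pm\frac\pi2)=0$, so \cref{e:hvert1} holds.

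For (a)$\Rightarrow$(b), I would start from a Lindelöf-type representation, a variant of \cref{e:fordlind}:
\[
f(z) = -\inv{2\pi i}\int_L \frac{\pi\,\phi(s)}{\sin(\pi s)}(-z)^{s}\d s ,
\]
where $L$ is a loop enclosing the integers $n\ge n_0$ in the domain of analyticity of $\phi$. The finitely many remaining terms $c_n z^n$ are polynomials and cause no trouble. Writing $\log(-z)=u+iv$, we have $|(-z)^s| = e^{u\Re s - v\Im s}$. On the ray $s=re^{\pm i\alpha}$ with $\alpha<\pi$, the factor $1/\sin(\pi s)$ contributes $e^{-\pi r\sin\alpha}$, while the type-$0$ hypothesis makes $\phi$ contribute only $e^{o(r)}$. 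Hence the integral over the two rays $\arg s=\pm\alpha$ converges locally uniformly in $(u,v)$ on an explicit region $\Omega_\alpha$.

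Letting $\alpha$ range over $(0,\pi)$ and rotating the two rays independently, the regions $\Omega_\alpha$ should fit together into a simply connected region of the $(u,v)$-plane. This region contains every point except the lifts described above, which yields continuation along every admissible path. For $\alpha<\frac\pi2$ the integrand is bounded by $|z|^{c}$ times an integrable factor, which gives polynomial type at $\tilde\infty e^{i(-\frac\pi2,\frac\pi2)}$. Here $c$ is the real part of the point where the loop crosses the real axis, and analyticity of $\phi$ near $\tilde\infty$ lets us push it left. This direction is computational, and I expect it to reduce to careful bookkeeping of the exponent $r(u\cos\alpha - v\sin\alpha)-\pi r\sin\alpha$.

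For (b)$\Rightarrow$(a), the first step is Bernstein's Mellin formula
\[
\phi(s) = \frac{\sin(\pi s)}{\pi}\int_0^{\infty e^{i\theta}}\Big(f(z)-\sum_{n<N}c_nz^n\Big)(-z)^{-s-1}\d z .
\]
Polynomial type at $\tilde\infty e^{i(-\frac\pi2,\frac\pi2)}$, together with continuation to the lifts of $\C\setminus\bar\D$ adjacent to the principal sheet, makes this formula define an interpolant analytic and of type $0$ in a right half-plane. Rotating $\theta$ on the logarithmic surface pushes the admissible directions of $s$ toward $\pm\frac\pi2$, but not past them, because no growth information on $f$ is available off $\tilde\infty e^{i(-\frac\pi2,\frac\pi2)}$.

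The main obstacle is to extend $\phi$ into the left sectors $\arg s\in(\frac\pi2,\pi)$ and $(-\pi,-\frac\pi2)$ while keeping type $0$ there. Following Arakelian's strategy, I would first try to split $f = f_1 + f_2$. Here $f_1$ would be analytic on the relevant part of the logarithmic surface with controlled growth in all directions, so that the Bernstein formula applied to $f_1$ gives a type-$0$ interpolant on the full sector. The remainder $f_2$ would be chosen as an entire function of $\log z$ (or, equivalently, as a tangential approximation error), so that its Taylor coefficients are interpolated by a function with good growth; the approximation theorem used in \Cref{t:arak84} would supply it. The delicate point is to make both pieces compatible with \cref{e:interp} at every integer simultaneously. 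Once the decomposition is in place, adding the two interpolants gives $\phi$ as in (a), and Carlson's theorem shows it does not depend on the splitting.
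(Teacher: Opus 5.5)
Your (b)$\Rightarrow$(a) does not go through, and it already fails at the step you treat as routine. You read the growth clause as a bound on $f$ for $|\arg z|<\frac\pi2$ only. With only that information, Bernstein's formula does not give a type-$0$ interpolant in a right half-plane. Along a ray $\arg z=\theta$ with small $\theta>0$ it gives $|\phi(s)|\lesssim e^{\pi|\Im s|+(\theta-\pi)\Im s}$. This is $e^{\theta\Im s}$ for $\Im s>0$, but $e^{(2\pi-\theta)|\Im s|}$ for $\Im s<0$. To improve the lower half you must identify this integral with the one along $\arg z=2\pi-\theta$. That requires Cauchy's theorem on the sector $\theta<\arg z<2\pi-\theta$ of the principal sheet, where you have no bound.

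No decomposition can repair this. Take $f(z)=e^{-z}$: it is entire and bounded on $\Re z\ge 0$. Yet $(-1)^n/n!$ has no interpolant analytic and of type $0$ in a right half-plane. Indeed, the direct part of \Cref{t:leroylindconv} (with $\sigma=0$, $R=1$), applied to $\phi(s+N)$, would give $e^{r}=O(r^{N})$ as $r\to\infty$. So under your reading, (b) does not imply (a). The growth clause must instead be read on the logarithmic surface: for each $C$, a bound $f(z)=O(|z|^{\gamma_C})$ on $|\arg z|\le C\log|z|$ over $|z|>1$, on all sheets. Equivalently, the directions of $\log z$ lie in $(-\frac\pi2,\frac\pi2)$, so $f$ is controlled along spirals.

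Independently of this, you never specify the splitting $f=f_1+f_2$. You do not say what domain and growth $f_1$ has, which approximation theorem produces $f_2$, or why the coefficients of $f_2$ admit a type-$0$ interpolant on $|\arg s|<\pi$. You also leave the matching at the integers open. So the decisive step, type $0$ for $\frac\pi2<|\arg s|<\pi$, is missing. The paper only cites the original proof, noting that it uses approximation and that Carlson's theorem gives uniqueness. Naming those two ingredients, as you do, does not implement them.

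With the logarithmic reading, the step can in fact be done directly, in the spirit of \Cref{a:proof}. Put $g(p):=f(e^{-p})$. It is analytic on $\C$ minus the rays $i(-\infty,0]$ and $i[2\pi,\infty)$, and $2\pi i$-periodic on $\Re p>0$. Take Cauchy's formula for $c_n$ as an integral over a period segment in $\Re p>0$, and push that segment through the gap $i(0,2\pi)$ to $\Re p=-\infty$. This suggests
\[
\phi(s)=\frac{1}{2\pi i}\Bigl(\int_{\epsilon-i\delta}^{\epsilon+i\delta}+\int_{\epsilon+i\delta}^{-\infty+i\delta}+e^{-2\pi is}\int_{-\infty+i(2\pi-\delta)}^{\epsilon+i(2\pi-\delta)}\Bigr)g(p)e^{ps}\,dp .
\]
This function equals $c_n$ at large integers and has type at most $\epsilon+\delta$ in a right half-plane. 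Its two tails lie in $\Re p<0$ and can be rotated to any direction in $(\frac\pi2,\frac{3\pi}2)$; take direction $\pi-\frac\theta2$ when $\arg s=\theta$. This gives analyticity and type $0$ on all of $|\arg s|<\pi$.

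Your (a)$\Rightarrow$(b) is sound in outline, but the growth estimate is wrong as written. As $|z|\to\infty$, i.e.\ $u\to+\infty$, rays with $\alpha<\frac\pi2$ diverge; for symmetric rays $\Omega_\alpha=\{u<(\pi-|v|)\tan\alpha\}$. You need $\alpha\in(\frac\pi2,\pi)$. The vertex then cannot be moved left: $\phi$ is only analytic near the directional infinities, so rays at angle close to $\pi$ force the vertex $s_0$ far to the right. You get $f=O(|z|^{s_0})$, with $s_0$ growing as the logarithmic sector widens. That is still finite polynomial type on each such sector, so this half is repairable.

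Finally, vertical translates of $\{\Re p\ge 0\}$ all coincide, so your description of the excluded set does not pick anything out. What the rotated rays actually produce, in the $\log(-z)=u+iv$ plane, is the domain $\C\setminus\{iv:|v|\ge\pi\}$. Its half-plane $u<0$ is the principal disk, with periodic values. Its two slits, ending at lifts of $1$, are what encode the non-principal lifts of $\bar\D$.
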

The theorem can be viewed as a converse to \Cref{t:leausurf}.

\section{Further directions and references}

Classical continuation results have been extended to functions of two \cite{ford1906,janusauskas1977,janusauskas1977a,janusauskas1980} and several complex variables \cite{mkrtchyan2015,mkrtchyan2019,mkrtchyan2022,mkrtchyan2024,ivanova2026}. Other work treats meromorphic interpolants \cite{ford1916,kampedeferiet1926,stein1965,selberg1978,gawronski1981,mkrtchyan2015}, Banach-space-valued interpolants \cite{arakelian1995,arakelian2003}, and functional-analytic counterparts to continuation results \cite{stein1965,demicheli1999,demicheli2012}.

There is a certain elegance in the paired expansions \cref{e:fphi(n),e:fphi(-n)} associated with \Cref{t:dufr}. This symmetry is lost in \Cref{s:u}, where the second expansion becomes asymptotic rather than convergent. A result of Bernstein \cite{bernstein1928,bernstein1928a,bernstein1928b} extending \Cref{t:leroylindconv} partially restores it by allowing $\phi$ to have at most factorial-like growth and requiring the power series $f$ to be summable, rather than convergent, in the Mittag-Leffler sense \cite[§4.11]{hardy1949}. This raises the question whether the results in \Cref{s:ulapl} admit corresponding extensions without the assumption $\kappa_K(0)<\infty$.

In another direction, Bernstein originally presented \Cref{t:bernstein} for generalized Dirichlet series \cite{bernstein1930,bernstein1933}. Analytic continuability of power series has also found a recent application to the continuation of corresponding Dirichlet series \cite{navas2021}.

More broadly, the unified formulations developed here raise the question whether a single theorem can encompass all the continuation results surveyed in this paper, including those for bounded and unbounded continuation domains and for the Riemann surface of the logarithm. Such a theorem would require a common treatment of the different compactifications and growth conditions that arise.

\subsection*{}

\appendix
\appendixpage

\section{Proof of the refinement of Carlson's second continuation theorem}\label{a:proof}

\begin{proof}[of \Cref{t:carlsonlapl}]
    Assume (a). The Laplace transform is defined outside $\W(h_\phi)^\dagger$ by \cite{beauduin2026b}
    \begin{equation}
        \Phi(p) = \int_0^{\infty e^{i\theta}} \phi(s) e^{-ps} \d s, \quad \Re(p e^{i\theta}) > h_\phi(\theta), \quad \theta\in \bra{-\frac\pi2,\frac\pi2}.
    \end{equation}
    To obtain decay towards $\Re p=-\infty$, fix $\gamma \in (c,0)$ and set
    \[
    \Phi_\gamma(p) := \Phi(p)-\int_0^\gamma\phi(s)e^{-ps}\d s.
    \]
    Shifting the integration ray gives, for $\Re(\pm ip)>h_\phi(\pm\frac\pi2)$,
    \[
    \Phi_\gamma(p) = \int_\gamma^{\gamma\pm i\infty}\phi(s)e^{-ps}\d s.
    \]
    On each smaller closed half-plane, this yields
    \[
    |\Phi_\gamma(p)| \le \int_0^\infty |\phi(\gamma\pm it)|e^{-\Re(p(\gamma\pm it))}\d t \ll e^{-\gamma\Re p}.
    \]
    The correction is entire and tends to zero as $\Re p\to-\infty$, so it changes neither the finite singularities nor those towards $-\infty$.

    Work locally near any $z\notin\exp(-K)\cup\{0\}$ and choose a branch of $\log$. The contour theorem of \cite{beauduin2026b} gives a contour $\Gamma$ around $K$ that leaves the poles of $(1-ze^p)^{-1}$ outside for all $z$ in this neighborhood. This construction also covers disconnected and purely infinite sets $K$.

    The unbounded parts of $\Gamma$ lie in half-planes where the preceding estimate holds. On the compact remainder, the estimate follows by enlarging the constant. Laplace inversion gives the absolutely convergent integral
    \begin{equation}
        \phi(s) = \inv{2\pi i}\oInt_{\Gamma}\Phi_\gamma(p)e^{ps}\d p,
        \quad \Re s>\gamma,
    \end{equation}
    which we sum at $s=n$ with weights $z^n$ for small $|z|$, using Fubini's theorem. Analytic continuation and the substitution $p\mapsto p+\log z$ then give
    \[
    f(z) = \inv{2\pi i}\oInt_{\Gamma+\log z}\frac{\Phi_\gamma(p-\log z)}{1-e^p}\d p.
    \]
    Where two such formulas apply, deforming one contour into the other without crossing singularities shows that they give the same value of $f(z)$. Together with the original power series near $0$, they give the continuation to $\C\setminus\exp(-K)$.

    It remains to bound $f$ at infinity. The estimate for $\Phi_\gamma$ gives
    \[
    |f(z)|\ll |z|^\gamma\oInt_{\Gamma+\log z}\frac{e^{-\gamma\Re p}}{|1-e^p|}\abs{\d p}.
    \]
    Fix a closed set of admissible directions. As $z$ tends to infinity in these directions, the contours can be chosen uniformly so that their translates by $\log z$ stay separated from $2\pi i\Z$. The denominator is therefore bounded away from zero when $\abs{\Re p}\le1$.

    On the remaining parts, use $x=\Re p$ with $\abs{\d p}\ll\abs{\d x}$. The integrand is bounded by a constant times $e^{-\gamma x}$ for $x\le-1$ and $e^{-(\gamma+1)x}$ for $x\ge1$. Both bounds are integrable since $-1<\gamma<0$. Thus $f(z)=O(|z|^\gamma)$ for every $\gamma\in(c,0)$, proving (b).

    \medskip

    Conversely, assume (b) and define
    \[
    \phi(s) := \inv{2\pi i}\oInt_\Gamma f(e^{-u}) e^{us} \d u,
    \]
    where $\Gamma$ encircles $K$ in a horizontal strip of width less than $2\pi$, with horizontal tails towards $\Re u=-\infty$. The growth assumption on $f$ gives, for every $\gamma \in (c, 0)$,
    \begin{align*}
        |\phi(s)|
        &\le \inv{2\pi}\oInt_\Gamma |f(e^{-u})|e^{\Re(us)}\abs{\d u} \\
        &\le \frac{C_\gamma}{2\pi}\oInt_\Gamma
        e^{-\gamma\Re u}e^{\Re u\Re s-\Im u\Im s}\abs{\d u}.
    \end{align*}
    Since $\Im u$ is bounded on $\Gamma$, the integral converges absolutely and locally uniformly for $\Re s>\gamma$ for every $\gamma > c$. The same estimate gives an exponential bound on every smaller half-plane, which shows that $\phi$ is analytic and of exponential type for $\Re s>c$.

    For $\Re p$ sufficiently large, Fubini's theorem gives
    \[
    \Phi(p) = \inv{2\pi i}\oInt_\Gamma\frac{f(e^{-u})}{p-u}\d u,
    \]
    which gives the continuation of $\Phi$ to $\bar\C\setminus K$ after deforming $\Gamma$ towards $K$.
    
    To verify interpolation, use $z=e^{-u}$, which is injective on the strip containing $\Gamma$. This gives
    \[
    \phi(s) = -\inv{2\pi i}\oInt_{\partial U}\frac{f(z)}{z^{s+1}}\d z,
    \]
    where $U$ is a neighborhood of $\exp(-K)$ whose boundary tails are radial and whose closure avoids $0$. Choose $\epsilon>0$ so that $\D_\epsilon$ does not intersect $U$. For all sufficiently large $R$ and every $n\in\N$, Cauchy's integral formula gives
    \[
    \oint\limits_{\partial(U\cup\D_R)}\frac{f(z)}{z^{n+1}}\d z = \oInt_{\partial\D_\epsilon}\frac{f(z)}{z^{n+1}}\d z+\oInt_{\partial U}\frac{f(z)}{z^{n+1}}\d z = 2\pi i(c_n-\phi(n)),
    \]
    whose left-hand side tends to zero. Indeed, for any $\gamma\in(c,0)$,
    \begin{align*}
        \abs{\,\oint\limits_{\partial(U\cup\D_R)}\frac{f(z)}{z^{n+1}}\d z}
        &\le \oint\limits_{\partial(U\cup\D_R)}\frac{|f(z)|}{|z|^{n+1}}\abs{\d z} \\
        &\ll R^{\gamma-n}+\int_R^\infty r^{\gamma-n-1}\d r \\
        &\ll R^{\gamma-n}\to_{R\to\infty} 0.
    \end{align*}
    Hence $\phi(n)=c_n$.
\end{proof}

The converse construction also gives the indicator bound in \Cref{t:carlsonu}. Indeed,
\begin{align*}
    |\phi(re^{i\theta})|
    &\le \inv{2\pi}\oInt_\Gamma |f(e^{-u})|e^{\Re(ure^{i\theta})}\abs{\d u} \\
    &\le \frac{C_\gamma}{2\pi}\oInt_\Gamma e^{-\gamma\Re u}\abs{\d u}
    \exp\pa{r\sup_{u\in\Gamma}\Re(ue^{i\theta})},
\end{align*}
and choosing $\Gamma$ arbitrarily close to $K$ yields $h_\phi(\theta)\le\kappa_K(-\theta)$ for $\theta\in[-\frac\pi2,\frac\pi2]$. Thus \Cref{t:carlsonlapl} recovers \Cref{t:carlsonu}.

\section*{Acknowledgements}

The author used OpenAI Codex to review drafts of this paper and to suggest improvements to its language, exposition and presentation. All suggestions were assessed by the author, who independently verified the mathematical content and takes full responsibility for the final text.

\addtocategory{add}{phragmén1908,riesz1920,ostrowski1921,ostrowski1923,boas1954,hardy1949,mushenheim1963,bessis1965,azarin1966,newman1976,conway1978,bros1996,flajolet2010,navas2021}

\printbibliography[
  title={Survey references},
  notcategory=add,
]

\printbibliography[
  title={Additional references},
  category=add,
  heading=subbibliography,
]

\end{document}